\theoremstyle{plain}%default
   \newtheorem{theorem}{Theorem}[section]
    \newtheorem{conjecture}[theorem]{Conjecture}
   \newtheorem{proposition}[theorem]{Proposition}
   \newtheorem{lemma}[theorem]{Lemma}
   \newtheorem{corollary}[theorem]{Corollary}
   \theoremstyle{definition}
   \theoremstyle{remark}
\newcommand{\news}{\setcounter{equation}{0}}
\newcommand{\eps}{\varepsilon}
\newcommand{\RR}{{\mathbb R}}
\newcommand{\ZZ}{{\mathbb Z}}
\newcommand{\CC}{{\mathbb C}}
\newcommand{\PP}{{\mathbb P}}
\newcommand{\TT}{{\mathbb T}}
\newcommand{\DD}{{\mathbb D}}
\newcommand{\Vol}{{\rm Vol}}
\newcommand{\Vols}{{\rm Vol}(\Sigma)}
\newcommand{\Sc}{{\rm Scal}}
\newcommand{\Sym}{{\rm Sym}}
\newcommand{\sech}{{\rm sech}}
\newcommand{\tp}{{\scriptscriptstyle{+}}}
\newcommand{\tm}{{\scriptscriptstyle{-}}}
\newcommand{\tpm}{{\scriptscriptstyle{\pm}}}
\newcommand{\tmp}{{\scriptscriptstyle{\mp}}}
\renewcommand{\AA}{\mathcal{A}}
\renewcommand{\P}{{\mathbb P}}
\newcommand{\R}{{\mathbb{R}}}
\newcommand{\Z}{{\mathbb{Z}}}
\newcommand{\C}{{\mathbb{C}}}
\newcommand{\I}{{\mathbb{I}}}
\newcommand{\beq}{\begin{equation}}
\newcommand{\eeq}{\end{equation}}
\newcommand{\bea}{\begin{eqnarray}}
\newcommand{\eea}{\end{eqnarray}}
\newcommand{\ben}{\begin{eqnarray*}}
\newcommand{\een}{\end{eqnarray*}}
\newcommand{\bem}{\begin{enumerate}}
\newcommand{\eem}{\end{enumerate}}
\newcommand{\ii}{\item}
\newcommand{\ra}{\rightarrow}
\newcommand{\ds}{\displaystyle}
\newcommand{\cd}{\partial}
\newcommand{\less}{\backslash}
\newcommand{\M}{{\sf M}}
\newcommand{\ol}{\overline}
\def \d{\mathrm{d}}
\newcommand{\ip}[1]{\langle #1 \rangle}
\newcommand{\ignore}[1]{}
\newcommand{\swap}[2]{{\color{blue} #2}}
\begin{document}
\title{The geometry of the space of BPS vortex-antivortex pairs}
\author{N.M.~Rom\~ao \\ Email: {\tt n.m.romao@gmail.com}, 
ORCID 0000-0003-3234-6076\\
Institut f\"ur Mathematik, Universit\"at Augsburg, 86135 Augsburg, Germany
\vspace*{0.5cm}
\\
J.M.~Speight\thanks{Corresponding author.} \\
Email: {\tt speight@maths.leeds.ac.uk}, ORCID 0000-0002-6844-9539\\
School of Mathematics, University of Leeds, Leeds LS2 9JT, UK}
\date{20 April, 2020}

\maketitle

\begin{abstract} 
The gauged sigma model with target $\P^1$, defined on a Riemann surface $\Sigma$, supports static solutions in which $k_\tp$ vortices coexist in
stable equilibrium with $k_\tm$ antivortices. Their moduli space is a noncompact complex manifold $\M_{(k_\tp,k_\tm)}(\Sigma)$ of dimension $k_\tp+k_\tm$ which inherits a 
natural K\"ahler metric $g_{L^2}$ governing the model's low energy dynamics. This
paper presents the first detailed study of $g_{L^2}$, focussing on the
geometry close to the boundary divisor $D=\partial \, \M_{(k_\tp,k_\tm)}(\Sigma)$.

On $\Sigma=S^2$, rigorous estimates of $g_{L^2}$ close to $D$ are obtained which imply that $\M_{(1,1)}(S^2)$ has finite volume and is geodesically incomplete. On $\Sigma=\R^2$, careful numerical analysis and a point-vortex formalism are used to conjecture asymptotic formulae for $g_{L^2}$ in the limits of small and large separation. All these results make use of a localization formula, expressing $g_{L^2}$ in terms of data at the (anti)vortex positions, which is established for
general $\M_{(k_\tp,k_\tm)}(\Sigma)$.  

For arbitrary compact $\Sigma$, a natural compactification of the space ${\M}_{(k_\tp,k_\tm)}(\Sigma)$ is proposed in terms of a certain limit of gauged linear sigma models, leading to formulae for its volume and total scalar curvature. The volume formula agrees with the result established for ${\rm Vol}({\sf M}_{(1,1)}(S^2))$, and allows for a detailed study of the thermodynamics of vortex-antivortex gas mixtures. It is found that the equation of state is independent of the genus of $\Sigma$, and that the entropy of mixing is always positive.

\end{abstract}

\vspace{10pt}
MSC2010: {\tt 53C80; 70S15, 35Q70}

\newpage

\section{Introduction}\news

Gauged sigma models at critical coupling stand out among the most tractable classes of field theories in two spatial dimensions,
for they can be explored  with a secure mathematical scaffolding provided by self-duality. There is a topological lower bound on the energy of a field configuration, attained by solutions of a system of first order PDEs called the {\em vortex equations}. Such solutions are conventionally called BPS, in analogy with an early and famous example fitting into this general framework, the Bogomol'ny\u\i--Prasad--Sommerfield monopole~\cite[p.~4]{AtiHit}. The moduli space of solutions of the vortex equations carries a natural K\"ahler structure which, apart from its intrinsic interest, is a powerful instrument to probe the low-energy structure of the various field theories (both classical and quantum) that
one may associate to these models.

In order to define a gauged sigma model on an oriented Riemannian surface $\Sigma$, one needs to specify:
\bem
\ii a K\"ahler manifold $X$ (the target space);
\ii a compact Lie group $\mathbb T$ with bi-invariant metric (the structure group);
\ii a moment map $\mu:X\ra\mathfrak{t}^*$ for a holomorphic and Hamiltonian action $\rho$ of $\mathbb T$ on $X$.
\eem
Given these data, one fixes a principal $\mathbb T$-bundle $\pi_P: P \rightarrow \Sigma$, and associates to any connexion $A$ in $P$ and section
$\phi$ of the associated bundle $P^X_\rho:=P\times_\rho X\rightarrow \Sigma$ (equivalently, a $\mathbb T$-equivariant map $\phi: P\rightarrow X$),
the energy 
\begin{equation} \label{GSM}
\mathsf{E}(A,\phi) :=\frac{1}{2} \int_\Sigma  \left(  |F_{A}|^2 + |{\rm d^A \phi}|^2 +  |\mu \circ \phi |^2 \right),
\end{equation}
where, as usual, {$F_A$} denotes the curvature of $A$. Classical solutions of the model are critical
points of this functional. Those which minimize ${\sf E}$ within their homotopy class are of particular interest. 
One speaks of a gauged {\em linear} sigma model (GLSM) if $X$ is a Hermitian vector space and $\rho$ is a unitary representation of $\mathbb T$.

The vortex equations associated to (\ref{GSM}) are the PDEs on $\Sigma$
\begin{equation}\label{vort1}
\bar \partial^A \phi =0,
\end{equation}
expressing that $\phi$ is a holomorphic section (with respect to $A$ and the complex structures on $\Sigma$ and $X$), together with
\begin{equation} \label{vort2}
*F_A + \mu^\sharp  \circ \phi =0,
\end{equation}
where $*$ is the Hodge operator of the metric $g_\Sigma$ on $\Sigma$,  and $\mu^\sharp:X\rightarrow \mathfrak{t}$ denotes  the composition of $\mu$ with the isomorphism $\sharp:\mathfrak{t}^*\ra\mathfrak{t}$ defined by the metric on $\mathbb T$. 
When $\Sigma$ is noncompact or has a boundary, these two equations need to be supplemented by appropriate boundary conditions, but at this stage we will simplify our discussion
by assuming that $\Sigma$ is compact.

Equations (\ref{vort1}) and (\ref{vort2}) are invariant under the group $\mathcal{G}(P):={\rm Aut}_\Sigma(P)$ of automorphisms of $\pi_P$, which acts on pairs $(A,\phi)$ via
gauge transformations $(A,\phi) \mapsto ({\rm Ad}_{\gamma}A -  \gamma^{-1} {\rm d}\gamma, \rho(\gamma) \phi)$. The $\mathcal{G}(P)$-orbits of vortex solutions turn out to be more natural objects
than the solutions themselves, and their classification can be addressed as a moduli problem. The relevant topological invariant in this situation is a $\mathbb T$-equivariant 2-homology class (see e.g.\ \cite{CGMS})
\begin{equation}\label{theinvariant}
{[\phi]_2^\TT}  \in  H^\TT_2(X;\ZZ)
\end{equation}
that one can construct  from any section $\phi\in \Gamma(\Sigma,P^X_\rho)$ and any classifying map 
for $\pi_P:P\ra\Sigma$, as in Equation (\ref{phi2T}) below. To each $\mathbf{h} \in H_2^\TT(X;\ZZ)$ we assign a vortex moduli space
\begin{equation} \label{moduli}
\mathsf{M}_{\mathbf{h}}^X(\Sigma):= \left\{ (A,\phi) : \bar \partial^A \phi =0, *F_A + \mu^\sharp \circ\phi =0, [\phi]_2^\TT=\mathbf{h} \right\}/\mathcal{G}(P).
\end{equation}
A theorem in~\cite{CGS,Mun} generalising what is sometimes called the Bogomol'ny\u\i\ argument~\cite{Bog} yields the equality
\begin{equation} \label{Bogtrick}
{\mathsf{E}}(A,\phi) =  \langle [\omega_X+\mu]^2_\TT,[\phi]_2^\TT \rangle + \frac{1}{2}\int_\Sigma \left(  \left| *F_A + \mu^\sharp \circ\phi  \right|^2 + 2 \left| \bar\partial^A \phi \right|^2 \right).
\end{equation}
Here, $[\omega_X+\mu]^2_\TT \in H_\TT^2(X;\RR)$ denotes the class of the equivariant 2-form $1\otimes \omega_X+\mu$ in the Cartan complex of the $\TT$-action on $X$ (see~\cite[\S 7.1]{BeGeVe}), and the pairing between equivariant homology and cohomology is effected via the Chern--Weil homomorphism ${\rm CW}_A$ supplied by the connexion $A$ (see \cite[\S 7.6]{BeGeVe}):
\begin{equation} \label{CWpairing}
\langle [\omega_X+\mu]^2_\TT,[\phi]_2^\TT \rangle = \int_\Sigma \phi^* {\rm CW}_A(1\otimes \omega_X+\mu).
\end{equation}
By Chern--Weil theory, this integral is independent of $A$ and constant when $\phi$ varies within a section homotopy class, whereas the integrand in the second term of (\ref{Bogtrick}) is manifestly nonnegative.
A consequence is that, for fields rendering (\ref{CWpairing}) nonnegative, solutions of the vortex equations (\ref{vort1}) and (\ref{vort2}) are precisely the {minima} of the functional ${\sf E}$ in (\ref{GSM}) in their section homotopy class, if they exist.

In general, a moduli space (\ref{moduli}) may contain orbifold points, but its smooth part comes equipped with a natural Riemannian structure which we refer to as the {\em  $L^2$ metric}. This geometric structure is induced from a
formal metric on the infinite-dimensional space of fields defined from the ingredients entering (\ref{GSM}). 
Namely: regarding the space $\mathcal{A}(P)$ of connexions in $P$ as an affine space over $\Omega^1(\Sigma;P^{\mathfrak{t}}_{\rm Ad})$, and interpreting the tangent space ${\rm T}_\phi \Gamma(\Sigma;P^X_\rho)$ 
as $\Gamma(\Sigma,\phi^*{\rm T}X/\TT)$, one sets
\begin{equation} \label{L2innprod}
(\dot A_1 , \dot\phi_1) \bullet (\dot A_2, \dot \phi_2 ):= \int_\Sigma\left( \langle \dot A_1 \stackrel{\wedge}{,}*\dot A_2 \rangle_\mathfrak{t}  + (\phi^*g_X)(\dot\phi_1,\dot\phi_2) \omega_\Sigma\right);
\end{equation}
here $g_X$ is the K\"ahler metric of $X$, $\omega_\Sigma$ the area form on $\Sigma$, and $(\dot A_j,\dot\phi_j)  \in {\rm T}_A\mathcal{A}(P) \oplus {\rm T}_\phi\Gamma (\Sigma,P^X_\rho)$ denote tangent vectors at
$(A,\phi)$.

One salient feature of the  $L^2$ metric $g_{L^2}$ on $\mathsf{M}_{\mathbf{h}}^X(\Sigma)$ is that it is K\"ahler with respect to an underlying complex structure induced from
\begin{equation} \label{cxstr}
(\dot A, \dot \phi) \mapsto \left(*\dot A, (\phi^*j_X) \dot \phi\right),
\end{equation}
which preserves (\ref{L2innprod}).
There are several ways of seeing this --- for example, one can recast the definition (\ref{moduli}) as an infinite-dimensional K\"ahler quotient of the subspace of $\mathcal{A}(P) \times \Gamma (\Sigma,P^X_\rho)$ cut out by Equation (\ref{vort1}) (a complex submanifold with respect to (\ref{cxstr}), and hence K\"ahler --- see Section 2.3 of \cite{Mun}) by the gauge group $\mathcal{G}(P)$; the left-hand side of Equation (\ref{vort2}) plays the role of a moment map for this action,
so there is an immediate analogy with Kempf--Ness theory~\cite{MumFogKir}, in parallel to the symplectic approach to gauge theory originating with Atiyah--Bott~\cite{AtiBot} and Donaldson~\cite{DonNS,DonNM}. (For more details we refer the reader to \cite{MunHK}, where the notion of stability appropriate to set up a Hitchin--Kobayashi correspondence in this setting was developed, and the analysis required to make sense of the underlying Marsden--Weinstein quotient was carried out under some technical assumptions.)

In this paper, we will
only be dealing with {\em Abelian} gauged sigma models, i.e.\ the case where $\TT$ is an Abelian group.
The simplest possible example is when $\TT={\rm U}(1)$ with its usual action on $X=\CC$; then (\ref{vort1})--(\ref{vort2}) describe the most familiar `gauged' vortices~\cite{Bog,JafTau} that 
occur in condensed matter physics, where $\phi$ is a section of a complex line bundle, and (\ref{GSM}) is nothing more than the potential (Ginzburg--Landau) energy of the Abelian Higgs
model~\cite{ManSut}, itself a Lorentzian GLSM in $1+2$ dimensions. The generalisation to higher-dimensional representations, i.e.\ to Abelian
GLSMs where an $n$-dimensional torus $\TT={\rm U}(1)^n$ acts on $X=\CC^r$, was considered in \cite{MorPle,SchGLSM,Weh}. Under suitable assumptions, the corresponding moduli spaces are known to be smooth and compact, and
the K\"ahler class $[\omega_{L^2}]$ of the corresponding  $L^2$ metrics was first described in~\cite{BapL2}.

Our main focus, however, will be on the simplest type of {\em nonlinear} Abelian gauged sigma model with compact target, namely, the case where $X=\PP^1$ is given its round metric, and $\TT={\rm U}(1)$ acts by rotations
around a fixed axis. This corresponds to a natural ${\rm U}(1)$-gauging of the classical Heisenberg model of ferromagnetism, whose solutions (sometimes referred to as {\em lumps}) are holomorphic or antiholomorphic maps $\Sigma \rightarrow \PP^1$. The gauged model was first considered on the Euclidean plane $\Sigma=\RR^2$ in \cite{SchP1}. For the most natural choice of moment map, the existence
of vortex solutions was established in~\cite{YanCVA}, and then in \cite{Sib2Yan} for compact $\Sigma$. These vortices can be parametrised by arbitrary pairs of effective divisors on $\Sigma$ with fixed degrees and non-intersecting supports. Interpreting $\phi$ as a meromorphic section of a complex line bundle  $L\rightarrow \Sigma$ (that is, a holomorphic section of the projectivisation $\PP(L\oplus \underline{\CC}) \rightarrow \Sigma$),
the two divisors prescribe the positions of $k_\tp$ zeros and $k_\tm$ poles of $\phi$, including their algebraic multiplicities; and so, under a natural assumption (\ref{strictBradlow}) that we spell out in Section~\ref{sec:P1model},
\begin{equation} \label{moduliP1}
\mathsf{M}^{\PP^1}_{(k_\tp,k_\tm)}(\Sigma) = (\Sym^{k_+}(\Sigma) \times \Sym^{k_\tm}(\Sigma)) \setminus D_{(k_\tp,k_\tm)}.
\end{equation}
We write $ \Sym^{k}(\Sigma):=\Sigma^k/\mathfrak{S}_k$ for the $k$th symmetric product of $\Sigma$ (which is naturally a complex manifold~\cite{ACGH} since $\Sigma$ carries a complex structure), whereas $D_{(k_\tp,k_\tm)}$ is the large diagonal hypersurface in the product. To a certain extent, the zeros and poles can be thought of as locations of particles (the vortex {\em cores}) and their antiparticles, as we shall see, and this justifies the term `antivortex' (also adopted in reference~\cite{Yan}) in our title. 
One can check that the bijection (\ref{moduliP1}) identifies the natural complex structure on the right-hand side with the one induced from (\ref{cxstr}); but giving any concrete description of the  $L^2$ metric $g_{L^2}$ (or its associated symplectic structure $\omega_{L^2}$) is a much harder problem.

This article is intended as a first detailed study of these  $L^2$ metrics. In particular, we shall investigate aspects that directly relate to the noncompactness of the moduli spaces (\ref{moduliP1}) --- their completeness or incompleteness; whether they determine finite volume and finite total scalar curvature for a compact surface $\Sigma$; and most crucially,  their asymptotic behaviour close to the boundary $D_{(k_\tp,k_\tm)}$. 
These issues are pertinent to understanding the low-energy dynamics of the corresponding (1+2)-dimensional field theories (in the spirit of Manton's adiabatic approximation~\cite{StuA,SpeA}), but also
decisive to elucidate  the supersymmetric extensions of these field theories after an A-twist \cite{BapTSM} at the quantum level, in the semiclassical approximation provided by supersymmetric quantum mechanics
on the moduli spaces (\ref{moduli})~\cite{BokRomP,BokRomDB,BisRomNG}.

Let us summarize the contents of the paper. We start by reviewing in Section~\ref{sec:P1model} the most basic aspects of the gauged $\PP^1$ model. 
In Section~\ref{sec:meroStrSam}, we show how a localization argument in previous work by Strachan and Samols on the linear (Abelian Higgs) model extends to
meromorphic Higgs fields. The following section addresses the  $L^2$ geometry of the model on the Euclidean plane. We examine the case of vortex-antivortex pairs and present
striking evidence for a self-similarity property of the fields in relation to the pair separation; this is tested numerically and employed to derive an explicit conjecture for the asymptotic geometry at small separation. In contrast, the asymptotics at large separation can be treated in close analogy with the linear situation. 
Section~\ref{sec:P1S2} is dedicated to the study of the gauged $\PP^1$ model on $S^2_R$, the round two-sphere of arbitrary radius $R$; the main results are a formula for the volume of the moduli space of vortex-antivortex pairs, and a proof that this moduli space is geodesically incomplete. 
In Section~\ref{sec:GLSMs}, we employ an auxiliary GLSM to derive conjectural formulae for the volume and the total scalar curvature for the moduli spaces (\ref{moduliP1}) in
much greater generality, assuming only that $\Sigma$ is a compact surface; the volumes obtained reduce to our rigorous volume formula for $\mathsf{M}^{\PP^1}_{(1,1)}(S^2_R)$. Finally, the more general volume formulae are applied in Section~\ref{sec:thermo} to analyze the classical statistical mechanics of a mixture of gases of vortices and antivortices on a compact surface.

\section{The gauged $\PP^1$ model} \label{sec:P1model}\news

We need to describe the gauged $\PP^1$ model in more detail in order to set up our most basic notation. 
We shall think of $X=\PP^1$ concretely as the sphere $S^2$ of unit radius where $\TT={\rm U}(1)$ acts by rotations about an axis.

For this choice of target, and assuming for now that $\Sigma$ is compact, the topological invariant (\ref{theinvariant}) in the classification of vortex solutions lies in the group
\begin{equation} \label{HT2P1}
H^\TT_2(\PP^1;\ZZ) := H_2({\rm E}\TT \times_\TT \PP^1;\ZZ)  \cong \ZZ^2.
\end{equation}
In order to grasp (\ref{HT2P1}) and appreciate what this invariant encapsulates, it is useful to refer to toric geometry~\cite{CLS}. We recall that the group ${\rm Div}_{\TT}(\PP^1)$ of $\TT$-invariant divisors in $\PP^1$  identifies with
$H_\TT^2(\PP^1;\ZZ)$, which sits in a splitting short exact sequence  (see Lemma~1 in~\cite{BokRomDB})
\begin{equation} \label{splitSES}
0 \rightarrow H^2({\rm B}\TT;\ZZ) \stackrel{\alpha}{\longrightarrow} H^2_{\TT}(\PP^1;\ZZ)\stackrel{\beta}{\longrightarrow} H^2(\PP^1;\ZZ) \rightarrow 0.
\end{equation}
A basis of ${\rm Div}_{\TT}(\PP^1)\cong \ZZ^2$ is provided by the two fixed points ${\bf x}_\tpm$ of the $\TT$-action (the North and South poles on $S^2$), and they provide convenient coordinates that may be interpreted as components of the isomorphism (\ref{HT2P1}):
\begin{equation} \label{npmreally}
k_\tpm := \langle {\bf x}_\tpm, \mathbf{h} \rangle \qquad \text{ for }\mathbf{h} \in H_2^\TT(\PP^1;\ZZ).
\end{equation}
In our context, we are interested in equivariant 2-homology classes of the type
\begin{equation} \label{phi2T}
\mathbf{h}=[\phi]_2^\TT:= H_2((\tilde{f} \times \phi)/\TT;\ZZ)([\Sigma])
\end{equation}
for a lift $\tilde{f}: P \rightarrow {\rm E}\TT$ of a classifying map $f:\Sigma\rightarrow {\rm B}\TT$, $[\Sigma] \in H_2(\Sigma;\ZZ)$ being the fundamental class.
In this situation, the integers $k_\tpm$ can also be interpreted as
intersection numbers of the image $\phi(\Sigma) \subset P^{\PP^1}$ with the surfaces $\Sigma_\tpm \subset P^{\PP^1}$ swept out by the fixed points ${\bf x}_\tpm \in  \PP^1$ of the fibre: (\ref{npmreally}) can be
reinterpreted as
$$
k_\tpm = \int_\Sigma \phi^* {\rm CW}_A(1\otimes {\rm PD}({\bf x}_\tpm)),  
$$
where ${\rm PD}$ stands for Poincar\'e dual and we are using the Chern--Weil homomorphism of any connexion $A$ in $\pi_P$. Note that such a connexion will always be integrable, and it endows 
$P^{\PP^1}$ with the structure of complex surface where $\Sigma_\tpm$ sit as complex curves. If in addition we assume that (\ref{vort1}) is satisfied, then also $\phi(\Sigma)$ is a complex curve, and
it follows that $k_\tpm \ge  0$ will hold.

Let us consider the dual $\alpha^*$ of the map $\alpha$ in (\ref{splitSES}). We note that the linear map taking $$[\Sigma] \mapsto \alpha^*([\phi]_2^\TT) \in H_2({\rm B}\TT;\ZZ)$$ admits an interpretation as 
the degree ${\rm deg}\, (P)$ of the principal $\TT$-bundle $P\rightarrow \Sigma$.
But a
principal torus bundle 
is determined up to isomorphism by its degree, so we shall write as $P(\mathbf{h})$  the principal $\TT$-bundle with degree $[\Sigma] \mapsto \alpha^*({\bf h})$ for a given $\mathbf{h} \in H_2^\TT(\PP^1;\ZZ)$, in a slight abuse of notation. Referring back to the
description of the map $\alpha$ in terms of the normal fan of $\PP^1$, we may write down the map recovering the degree from the equivariant 2-homology class explicitly as 
\begin{equation} \label{degreetop}
\alpha^*: (t_\tp,t_\tm) \mapsto t_\tp - t_\tm.
\end{equation}
On the other hand, one can dualise the short exact sequence (\ref{splitSES}) to obtain an inclusion $\beta^*$ of $H_2(\PP^1;\ZZ) \cong \ZZ$ into $H_2^\TT(X;\ZZ)$ as the diagonal $\{ (k,k) | k \in \ZZ \}$ in terms of the coordinates (\ref{npmreally}). As we shall see later (in Equation (\ref{Ebound}) below), for a fixed moment map $\mu$ the total energy of a vortex, topologically quantised as the invariant (\ref{CWpairing}), supplements the information given by the degree (\ref{degreetop}) in a
way that completely determines the topological charge $[\phi]_2^\TT$, and vice-versa.

In our calculations, we shall describe the target by means of the isometric embedding $X=S^2\hookrightarrow \RR^3$,
mapping the North pole ${\bf x}_\tp$ to the vector $\mathbf{n}:=(0,0,1).$ The area form on $S^2$ can be written as
$$
\omega_{S^2}|_\mathbf{u}(\mathbf{v},\mathbf{w})= \mathbf{u} \cdot ( \mathbf{v}\times  \mathbf{w}) \qquad \text{ for } \mathbf{u}\in S^2 \text{ and } \mathbf{v}, \mathbf{w} \in {\rm T}_\mathbf{u}S^2 \subset {\rm T}_{\bf u}\RR^3=\RR^3,
$$ 
whereas the complex structure is $j_{S^2}|_{\bf u}={\bf u}\times$. 
Under the obvious identification $\mathfrak{u}(1)^*  \cong \RR$, the possible moment maps
\begin{equation}\label{momentmap}
\mu(\mathbf{u})=-\mathbf{n}\cdot \mathbf{u} +\tau
\end{equation}
 for the circle action are given by the height function (relative to $-\bf n$)
up to translation by a constant $\tau \in \RR$.  The most natural choice is $\tau = 0$, but other choices of $\tau$ will shift the vacuum circle of latitude from the equator and give rise to different versions of
the model, for which the roles of vortices
and antivortices are no longer interchanged through the antipodal map of $S^2$.

Often, it will be convenient to assume that a local trivialization or section $\sigma: U\rightarrow P$ of $\pi_P$ 
has been given over an open set $U \subset \Sigma$. The restriction $\phi|_{U}$ in the trivialization can also be interpreted as a smooth map $\mathbf{u}: U \rightarrow S^2 \subset \RR^3$,
and the connexion 1-form $A\in \Omega^1(P;\mathfrak{u}(1))$ pulls back as $a:=\sigma^*A \in \Omega^1(U;\RR)$ with the identification $\mathfrak{u}(1)^*\cong \RR$ made above. Any other choice of local
trivialization over $U$ is obtained from $\sigma$ through pointwise multiplication by a map ${\rm e}^{\rm i \chi}: U \rightarrow {\rm U}(1)$, and the two choices are related through a gauge transformation
\begin{equation} \label{gaugtra}
a \mapsto a + {\rm d}\chi,\qquad \mathbf{u} \mapsto 
\left(
\begin{array}{ccc}
\cos \chi  & -\sin \chi & 0\\
\sin \chi & \cos \chi & 0\\
0 & 0 & 1
\end{array}
\right)
\mathbf{u}.
\end{equation}
In these conventions, we can also think of ${\rm d}^A \phi|_U : {\rm T}U 	\rightarrow \Gamma(U,\phi^*{\rm T}S^2/\TT) $ somewhat more concretely as a map ${\rm T}U \rightarrow {\rm T}\RR^3$ that we write as
$$
{\rm d}^a \mathbf{u} = {\rm d}\mathbf{u} - a\,  \mathbf{n}\times \mathbf{u},
$$
whereas the curvature of the connexion restricts simply as $F_a := F_A|_U= {\rm d}a$. Sometimes, it will  be convenient to compose ${\bf u}=(u_1,u_2,u_3)$ with the stereographic projection from the South
pole to obtain a complex function $u= \frac{u_1+{\rm i}u_2}{1+u_3}: U\rightarrow \CC \cup \{ \infty \}$ that also represents $\phi|_U$ in the trivialization. Accordingly, we will refer to
preimages of ${\bf x}_{\tpm}$ as zeros and poles, respectively.

It is instructive to reformulate the Bogomol'ny\u\i\ argument (\ref{Bogtrick}) using a choice of local trivialization.
It suffices to consider the case in which $Z:=\phi^{-1}(\Sigma_\tp\cup\Sigma_\tm)$ is finite, and  every zero and pole of $\phi$ has multiplicity
$1$ or $-1$. Choose any ${\bf y}_0\in\Sigma\less Z$ and let $U=\Sigma\less\{{\bf y}_0\}$. Certainly $P^{\P^1}\rightarrow \Sigma$ trivializes over $U$. Choose a trivialization. Then, since $U$ is dense in $\Sigma$,
\begin{eqnarray}
{\sf E}(\phi,A)&=& \frac{1}{2} \int_U \left( |F_a|^2 + |{\rm d}^a \mathbf{u}|^2 + |\mathbf{n}\cdot \mathbf{u}-\tau|^2  \right) \nonumber \\
&=& \frac{1}{2}\int_U\left( |F_a|^2 + |({\rm d}^a {\bf u})({\bf e}_1)|^2 + |({\rm d}^a  {\bf u})({\bf e}_2)|^2 + |{\bf n}\cdot {\bf u} - \tau|^2 \right) \label{energyE1} \\
&=& \frac{1}{2}\int_U \left(  |F_a -*({\bf n}\cdot {\bf u} -\tau) |^2  + |({\rm d}^a {\bf u})({\bf e}_1) + {\bf u} \times  ({\rm d}^a {\bf u})({\bf e}_2)|^2  \nonumber \right)  +\int_U\Xi,\nonumber
\end{eqnarray}
where we defined
\begin{eqnarray} \label{topintegrand}
\Xi &:=&\mathbf{u}\cdot ({\rm d}^a \mathbf{u} \times {\rm d}^a {\bf u}) + ({\bf n} \cdot {\bf u} - \tau)F_a. \nonumber \\
&=& \mathbf{u}^*\omega_{S^2}+ {\rm d}(({\bf n}\cdot {\bf u} -\tau)a).
\end{eqnarray}
In the second step of (\ref{energyE1}) we evaluate at two orthonormal vector fields on $U$, say  ${\bf e}_1$ and ${\bf e}_2=j_\Sigma {\bf e}_1$, where $j_\Sigma$ is the complex structure on $\Sigma$. The last integral $\int_U\Xi$ matches with the topological term (\ref{CWpairing})
expected from Equation (\ref{Bogtrick}). 

It remains to compute $\int_U\Xi$. First, note that $\Xi$ is gauge invariant, so extends to a smooth two-form on $\Sigma$, which we also denote $\Xi$. On $U\less Z$, define the one-form
\begin{equation}\label{1formxi}
\xi := ({\bf n}\cdot {\bf u}) (a - {\bf u}^*{\rm d}\varphi ),
\end{equation}
where $\varphi$ denotes the azimuthal angle on $S^2$.
This form is also gauge invariant, so extends to a smooth one-form on $\Sigma\less Z$. A short computation
reveals that, on $\Sigma\less Z$,
\beq
\Xi=\d\xi -\tau F_a.
\eeq
Hence,
\ben
\int_U\Xi=\int_\Sigma\Xi&=&-2\pi\tau(k_\tp-k_\tm)+\int_{\Sigma\less D}\d\xi\\
&=&-2\pi\tau(k_\tp-k_\tm)-\sum_{{\bf z}\in D}\lim_{\eps\ra 0}\int_{\cd B_\eps({\bf z})}\xi\\
&=&-2\pi\tau(k_\tp-k_\tm)+\sum_{{\bf z}\in D}\lim_{\eps\ra 0}\int_{\cd B_\eps({\bf z})}({\bf n}\cdot {\bf u}){\bf u}^*{\rm d}\varphi,
\een
where $B_\eps({\bf z})$ denotes the disk of radius $\eps$ centred on ${\bf z}$. In the case where ${\bf z}\in\phi^{-1}(\Sigma_\tp)$ is a zero with mutiplicity $\pm 1$, 
${\bf n}\cdot {\bf u}({\bf z})=1$ and $\int_{\cd B_\eps({\bf z})}{\bf u}^*{\rm d}\varphi=\pm 2\pi$, so each such point contributes $\pm 2\pi$ to the sum.
Similarly, if ${\bf z}\in\phi^{-1}(\Sigma_\tm)$ is a pole with multiplicity $\pm1$, ${\bf n}\cdot {\bf u}({\bf z})=-1$ and $\int_{\cd B_\eps({\bf z})}{\bf u}^*{\rm d}\varphi=\mp 2\pi$ (note the angle $\varphi$ winds {\em clockwise} around the South pole), so each such point also contributes $\pm 2\pi$. Summing over all points in $D$, we find that
\bea
\int_\Sigma\Xi&=&-2\pi\tau(k_\tp-k_\tm)+2\pi(k_\tp+k_\tm)
=2\pi(1-\tau)k_\tp+2\pi(1+\tau)k_\tm.
\eea

We conclude that there is a bound on the energy
 \begin{equation} \label{Ebound}
 {\sf E}(A,\phi) \ge 2 \pi (1-\tau) k_\tp + 2 \pi(1+\tau) k_\tm,
 \end{equation}
 which is attained in each section homotopy class in which the 
vortex equations (\ref{vort1})  and (\ref{vort2}) admit solutions, corresponding to minimizers of ${\sf E}$.
In the trivialization over $U$, the equations take the form
\begin{equation}\label{vort1P1}
{\rm d}^a {\bf u} + {\bf u} \times  {\rm d}^a {\bf u} \circ j_\Sigma=0
\end{equation}
and
\begin{equation} \label{vort2P1}
*F_a -{\bf n}\cdot {\bf u} +\tau =0.
\end{equation}
We see that they are gauge-invariant and thus globalize over $\Sigma$, as they should.

Note that the function ${\bf n}\cdot {\bf u}$ on $U\subset \Sigma$ takes values in $[-1,1]$. Integrating Equation (\ref{vort2P1}) over the dense open trivializing set, one obtains the two bounds
\begin{equation}\label{Bradlow}
-(1+\tau) {\rm Vol}(\Sigma) \le 2 \pi (k_\tp-k_\tm) \le (1-\tau) {\rm Vol}(\Sigma)
\end{equation}
as necessary conditions for existence of vortex solutions; inequalities such as these are sometimes referred to as {\em Bradlow's bounds} \cite{Nog, Bra,ManSut}. More invariantly, one can reinterpret (\ref{Bradlow}) by saying that the degree of $P$, seen as a homomorphism $H_1(\Sigma;\ZZ) \rightarrow H_1({\rm B}\TT;\ZZ) \subset H_1({\rm B}\TT;\RR) \cong \mathfrak{u}(1)$, linearly extends to a map taking the {\em K\"ahler co-class} $[\omega_\Sigma]^\vee \in H_2(\Sigma;\RR)$ (where the K\"ahler class $[\omega_\Sigma]$ evaluates as unity) to the image $\mu^\sharp(S^2) \subset \mathfrak{u}(1) \cong \RR$ of the moment map. This image is a Delzant polytope for compact toric $X$; in our simple situation $X=S^2$, it corresponds to the bounded interval $[-1-\tau, 1-\tau]$ of shifted heights on the target.

Whenever the strict inequalities hold in (\ref{Bradlow}) (i.e.\ the K\"ahler co-class is taken to the {\em interior} of the interval), we have the following result
(see also \cite{Mun,BapVA, BokRomTFB}, and \cite{Sib2Yan} for the case $\tau=0$).

\begin{theorem} \label{thmmodulispace}
Suppose  that $0 \ne \mathbf{h} \in {H_2^\TT(\PP^1;\ZZ)}$ lies in the cone
$${\rm span}_{\RR_{\ge 0}} \, \{ {\bf x}_\tp, {\bf x}_\tm \} \; \subset {\rm Div}_\TT(\PP^1)\otimes_\ZZ \RR =H^\TT_2(\PP^1;\RR) \cong \RR^2$$ 
(in other words, that the quantities $k_\tpm$ defined from $\bf h$ in (\ref{npmreally}) are nonnegative, not both zero), and that the inequalities
\begin{equation}\label{strictBradlow}
-(1+\tau) {\rm Vol}(\Sigma) < 2 \pi (k_\tp-k_\tm) < (1-\tau) {\rm Vol}(\Sigma)
\end{equation}
are satisfied for some $\tau \in \RR$. Then the moduli space
\begin{equation} \label{moduliP1def}
\mathsf{M}_{\mathbf{h}}^{\PP^1}(\Sigma):= \left\{ (A,\phi) : \bar \partial^A \phi =0, *F_A + \mu^\sharp \circ\phi =0, [\phi]_2^\TT=\mathbf{h} \right\}/\mathcal{G}(P(\mathbf{h}))
\end{equation}
of the vortex equations for the gauged $\PP^1$ model with moment map (\ref{momentmap}) admits the description
\begin{equation} \label{moduliP1descr}
\mathsf{M}^{\PP^1}_{\mathbf{h}}(\Sigma) \cong (\Sym^{k_+}(\Sigma) \times \Sym^{k_\tm}(\Sigma)) \setminus D_{(k_\tp,k_\tm)}.
\end{equation}
\end{theorem}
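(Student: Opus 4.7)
The plan is to construct an explicit bijection by reading off the zeros and poles of $\phi$ as a pair of disjoint effective divisors on $\Sigma$, and then establishing existence and uniqueness of a vortex representative for any such pair of divisors of the correct degrees.

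First I would build the forward map $\Phi : \mathsf{M}^{\PP^1}_{\mathbf{h}}(\Sigma) \to (\Sym^{k_\tp}(\Sigma) \times \Sym^{k_\tm}(\Sigma)) \setminus D_{(k_\tp,k_\tm)}$. Any connexion $A$ in a torus bundle over a Riemann surface is automatically integrable, so it endows the associated bundle $P^{\PP^1}\to\Sigma$ with the structure of a complex surface in which the fixed-point surfaces $\Sigma_\tpm$ are complex curves. Equation (\ref{vort1}) makes $\phi$ a holomorphic section, so the preimages $\phi^{-1}(\Sigma_\tpm)$ are effective divisors on $\Sigma$; by (\ref{npmreally}) their degrees equal $k_\tpm$, and they are disjoint because $\mathbf{x}_\tp \ne \mathbf{x}_\tm$. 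The assignment is manifestly gauge-invariant since the $\TT$-action on $\PP^1$ fixes $\mathbf{x}_\tpm$.

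The harder direction is to show that, given any pair of disjoint divisors of degrees $k_\tpm$, there exists a solution of (\ref{vort1P1})--(\ref{vort2P1}) with precisely these zeros and poles, unique up to gauge. Over any local trivialization in which $\mathbf{u}$ avoids the poles of the target, (\ref{vort1P1}) determines $a$ algebraically in terms of $\mathbf{u}$, while $\mathbf{u}$ itself is, via stereographic projection, a meromorphic function with the prescribed zero/pole divisor. Substituting back into (\ref{vort2P1}) reduces the whole system to a single semilinear elliptic PDE for a real gauge-invariant scalar (a suitable function of $\mathbf{n}\cdot\mathbf{u}$) with logarithmic source terms at the divisor points. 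Under the strict Bradlow inequalities (\ref{strictBradlow}), existence and uniqueness of a smooth solution follow from the variational and sub-/super-solution arguments carried out in \cite{YanCVA} for $\tau=0$ on $\R^2$ and in \cite{Sib2Yan} for compact $\Sigma$; the general Hitchin--Kobayashi-type framework of \cite{Mun} subsumes these results. Uniqueness of the PDE solution yields injectivity of $\Phi$, existence gives surjectivity, and the identification of complex structures on both sides follows by matching the linearisation at a vortex solution with the natural complex structure on the product of symmetric products.

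The main obstacle is the analytical PDE step. The two-sided character of (\ref{strictBradlow}) reflects the compactness of the target $\PP^1$: both bounds must be strict, corresponding to the moment-map image of the K\"ahler co-class lying strictly in the interior of the Delzant interval $[-1-\tau,1-\tau]$. This is precisely what is required for the source term in the reduced PDE to sit in the range of the nonlinearity, and what permits stable coexistence of vortices and antivortices --- an essential difference from the classical Abelian Higgs case, where only the upper bound is active.
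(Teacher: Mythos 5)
Your proposal is correct in outline, but it takes a genuinely different route from the paper. You argue via direct reduction to a scalar semilinear elliptic equation (the Taubes equation with $\delta$-sources of both signs) and invoke the existence/uniqueness theorems of Yang \cite{YanCVA} and Sibner--Sibner--Yang \cite{Sib2Yan}; the paper instead takes the symplectic/GIT viewpoint, checking that the Bradlow condition (\ref{strictBradlow}) makes every solution of $\bar\partial^A\phi=0$ both \emph{simple} and \emph{stable} in the sense of \cite{MunHK} (the stability verification being delegated to Baptista \cite{BapVA}), and then applying Mundet's Hitchin--Kobayashi correspondence to produce a unique unitary-gauge orbit inside each complex-gauge orbit. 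The two approaches buy different things: your PDE route is more concrete and is in fact what the rest of the paper relies on for quantitative work with the $L^2$ metric, but as you cite it the compact case is only literally covered for $\tau=0$ (\cite{Sib2Yan} treats the symmetric moment map; for $\tau\ne 0$ on $\RR^2$ one needs \cite{Han}, and on compact $\Sigma$ a further adaptation of the sub-/super-solution scheme in which the two strict inequalities of (\ref{strictBradlow}) enter as the solvability constraints). The Hitchin--Kobayashi route handles arbitrary $\tau$ uniformly and makes the identification of the complex structure with that of $\Sym^{k_\tp}(\Sigma)\times\Sym^{k_\tm}(\Sigma)$ essentially automatic, since the bijection is realised as a formal GIT quotient by the complexified gauge group; in your version that identification is asserted via the linearisation but not actually carried out. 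Your forward map (reading off the disjoint divisors $\phi^{-1}(\Sigma_\tpm)$ of degrees $k_\tpm$) agrees with the paper's and is fine.
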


\begin{proof}[(Sketch of) Proof.]
We take the symplectic viewpoint on the quotient (\ref{moduliP1def}), as explained in \cite{Mun} and already mentioned in our Introduction,
interpreting Equation (\ref{vort2}) as giving the zero-set of the moment map for the Hamiltonian
$\mathcal{G}(P(\mathbf{h}))$-action on the infinite-dimensional  manifold $$\mathcal{A}(P(\mathbf{h})) \times \Gamma(\Sigma,P(\mathbf{h})^{\PP^1}),$$ 
which is equipped with a K\"ahler structure by the assignments (\ref{cxstr}) and (\ref{L2innprod}). 

The assumption (\ref{strictBradlow}) ensures that, for a pair $(A, \phi)$ with  $\bar\partial^A \phi=0$, the function ${\bf n} \cdot {\bf u}$  is neither of the constants $\pm1$ in an open $U\subset \Sigma$; and it is easy to check that this is equivalent to  $(A,\phi)$ being {\em simple}, in the sense of Definition~2.17 of \cite{MunHK}. But the same assumption also implies that $(A,\phi)$ is {\em stable}  in the sense of the very general Definition~2.16 of \cite{MunHK}, which in this Abelian setting reduces to the condition (29) in \cite{BapVA}; Baptista shows that this condition holds true in the subsequent paragraph, within the proof of Proposition~4.7 ibidem.

Now we can resort to Mundet's Hitchin--Kobayashi correspondence (Theorem~2.19 in \cite{MunHK}). Consider the  complexification $\mathcal{G}(P({\bf h}))^\CC$, which also acts on pairs $(A,\phi)$
with $[\phi]_2^\TT={\bf h}$ and preserves the Equation (\ref{vort1}). The correspondence establishes that, for a simple and stable solution of this equation, there is a unique
$\mathcal{G}(P({\bf h}))$-orbit inside its $\mathcal{G}(P({\bf h}))^\CC$-orbit where (\ref{vort2}) also holds.
This gives a bijection between (\ref{moduliP1def}) and the formal GIT quotient
\begin{equation}\label{GITquotient}
\left\{ (A,\phi) : \bar \partial^A \phi =0, \phi(\Sigma) \ne \Sigma_\tpm,  [\phi]_2^\TT=\mathbf{h} \right\}/\mathcal{G}(P(\mathbf{h}))^\CC.
\end{equation}
Finally, it is clear that (\ref{GITquotient}) identifies with the right-hand side of (\ref{moduliP1descr})
through the bijection 
\begin{equation*} \label{thebijection}
[(A,\phi)] \mapsto (\phi)=(\phi)_0-(\phi)_\infty,
\end{equation*} 
which assigns to a solution of (\ref{vort1}) up to complex gauge transformations the divisor (of zeros and poles) of the meromorphic section $\phi$.
\end{proof}

Condition (\ref{strictBradlow}) being granted, this theorem establishes that, up to gauge equivalence, BPS configurations in the gauged $\PP^1$ model are determined by the two effective
(but possibly zero, though not simultaneously) divisors  $(\phi)_0$ and $(\phi)_\infty$
on $\Sigma$ of 
degrees $k_\tp$ and $k_\tm$, respectively,  specifying isolated {\em cores} of vortices and antivortices on the surface that may coalesce {\em separately}. These divisors are only subject to the obvious constraint that
their supports 
$$
{\rm supp} \, (\phi)_0 = \phi^{-1} (\Sigma_\tp) \quad \text{and} \quad {\rm supp} \, (\phi)_\infty = \phi^{-1} (\Sigma_\tm)
$$
must not intersect.

In references~\cite{YanCVA, YanSOMC, Yan}, Yang argued that the result (\ref{moduliP1descr}) also holds for the noncompact base surface $\Sigma=\RR^2$ endowed with the Euclidean metric,
for which the assumption (\ref{strictBradlow}) does not apply. In this situation, the definitions and derivations we have presented above have to be adjusted, and sometimes even completely reworked. Though
$\Sigma$ is then contractible and hence $P\rightarrow \Sigma$ a trivial bundle, one incorporates nontrivial topology via the assumption that the section $\phi$ converges along a boundary circle $S^1_\infty$ at infinity with a certain (exponential) regularity. One may introduce a global trivialization ($U=\Sigma$), and the role played by ${\rm deg}\,P$ in the arguments above is delegated to the degree of the map 
$$
\lim_{|{\bf y}|\ra\infty}{\bf u}({\bf y}):S^1_\infty\ra \mu^{-1}(0)\cong S^1
$$
from spatial infinity to the circle of vacua of the Higgs potential. Strictly speaking, Yang (following Schroers in \cite{SchP1}) only considered the `easy plane' case $\tau=0$ (see e.g.~Theorem~11.3.1 in \cite{Yan}), but for $\tau\ne 0$ the existence of vortices in this model has also been established, following from the results of \cite{Han} for more general potentials. In Sections~\ref{sec:meroStrSam},~\ref{sec:P1Eucl}~and~\ref{sec:P1S2} we shall also restrict our attention to $\tau=0$ for the sake of simplicity.

\subsection{The  $L^2$ metric on moduli of BPS configurations} \label{sec:L2metric}

Recall that the  $L^2$ metric is induced from the formal metric on the space of fields $\mathcal{A}(P) \times \Gamma(\Sigma,P^{\PP^1})$ given by the inner product (\ref{L2innprod}) on tangent vectors
\begin{equation} \label{tangentvectors}
(\dot A,\dot\phi)  \in {\rm T}_A\mathcal{A}(P) \oplus {\rm T}_\phi\Gamma (\Sigma,P^{\PP^1})= \Omega^1(\Sigma,P^{\mathfrak{u}(1)}_{\rm Ad}) \oplus \Gamma(\Sigma,\phi^*{\rm T}\PP^1/\rm{U}(1))
\end{equation}
at some pair $(A,\phi)$.
In the following, we will fall back on various descriptions of tangent vectors on the moduli space itself. The first one (see e.g.~\cite{StuAHV}) consists in identifying the elements in the (finite-dimensional) vector space
${\rm T}_{[(A,\phi)]} {\sf M}_{(k_\tp,k\tm)}^{\PP^1}(\Sigma)$ with those $(\dot A,\dot \phi)$ in (\ref{tangentvectors}) that satisfy the {\em linearisation} of the vortex equations (\ref{vort1}) and (\ref{vort2})
at $(A,\phi)$, and are {\em orthogonal} to the orbit $\mathcal{G}(P) (A,\phi)$ with respect to the inner product (\ref{L2innprod}). 

For concreteness, we make use of representatives $(a,{\bf u})\in \Omega^1(U;\RR)\times C^\infty(U;S^2)$ of each pair
$(A, \phi)$ in a trivialization of $P\rightarrow \Sigma$ over some open subset $U\subset \Sigma$, as above, and denote by $(\dot a,\dot{\bf u})$ solutions to the linearisation of 
equations (\ref{vort1P1}) and (\ref{vort2P1}) about $(a,{\bf u})$. Since the local form of the action of ${\rm Lie}\, \mathcal{G}(P)$ tangent to (\ref{gaugtra}) is obtained by linearisation as
$$
(a,{\bf u}) \mapsto (a+{\rm d}\chi, {\bf u}+ \chi {\bf n} \times {\bf u}) \qquad \text{ for }\chi \in {\rm Lie}\, \mathcal{G}(P|_U) \cong C^\infty(U,\RR),
$$
the extra condition ensuring othogonality to gauge orbits with respect to (\ref{L2innprod}) can be reexpressed by trivializing over a dense open $U$ and imposing
$$
0=(\dot a,\dot{\bf u}) \bullet ({\rm d} \chi, \chi {\bf n} \times {\bf u}) = \langle {\rm d}^* \dot a + \dot {\bf u} \cdot ({\bf n} \times {\bf u}), \chi \rangle_{L^2(U)} 
$$
for all $\chi \in C^\infty(U;\RR)$, where we are employing the usual  $L^2$ inner product on differential forms with respect to the metric $g_{\Sigma}|_U$, writing ${\rm d}^*$ for the adjoint to $\rm d$. This is
also expressed by the differential equation over such $U$
\begin{equation} \label{Gauss}
{\rm div} \, \dot a = - {\rm d}^* \dot a = \dot {\bf u} \cdot ({\bf n} \times {\bf u}), 
\end{equation}
which corresponds to Gau\ss 's law in Maxwell's theory (if we interpret $\dot a$ as an electric field), or the choice of {\em Coulomb gauge}.

Given a representative $(\dot a,\dot {\bf u}) \in \Omega^1(U)\oplus C^\infty(U,\RR^3)$ of a tangent vector  on the moduli space, the squared  $L^2$ norm induced from (\ref{L2innprod}) is
\begin{equation} \label{kinetic}
\| \dot a \|_{L^2(U)}^2  +  {\|\dot{\bf u}\|_{L^2(U)}^2}.
\end{equation}
In this normalization, the quantity (\ref{kinetic}) can also be given the physical interpretation of (twice) the {\em kinetic energy} ${\sf E}_{\rm kin}$ for fields $(a(t),{\bf u}(t))$ in the dynamical gauged $\PP^1$ model depending on a time parameter $t$, provided that one further imposes a {\em temporal gauge} where the 1-form $a(t)$ on spacetime
has only spatial components, at any instant $t$.

\section{Meromorphic Strachan--Samols localization} \label{sec:meroStrSam}\news

In order to describe the  $L^2$ metrics concretely in terms of coordinates on the moduli space, we will show how a localization technique for vortices in line bundles, introduced by Strachan~\cite{Str} on the hyperbolic plane and later applied by Samols~\cite{Sam} to the Euclidean plane, extends to our situation where the Higgs field is a meromorphic section.

We begin with the case $\Sigma=\RR^2$ and, following the notation in Section~\ref{sec:P1model}, work with pairs $(a,u)$ in a global trivialization. It is convenient to recast the vortex equations (\ref{vort1P1}) and (\ref{vort2P1}) as a second-order PDE for the gauge-invariant function
\begin{equation} \label{functionh}
h:= \log |u|^2= \log \left( \frac{1-{\bf n} \cdot {\bf u}}{1+{\bf n}\cdot {\bf u}}\right) : \RR^2 \rightarrow \RR \cup \{ -\infty, +\infty \}
\end{equation}
taking finite values outside the {\em $(\pm)$-vortex} cores (a shorthand we will use for vortices and antivortices, respectively), and $\mp \infty$ at the cores. For this, we solve  (\ref{vort1P1})
for $a=a_z{\rm d}z + a_{\bar z}{\rm d}\bar z$ with respect to $u$, yielding
\begin{equation} \label{solvevort1}
a_{\bar z} = -{\rm i} \frac{\partial_{\bar z} u}{u}
\end{equation}
outside the cores. Then we use this to eliminate $a$ from (\ref{vort2P1}), obtaining
$$
*F_a=*{\rm d}a=-\frac{1}{2} \nabla^2  \log |u|^2
$$
and then, after solving (\ref{functionh}) for ${\bf n}\cdot {\bf u}$,
\begin{equation}\label{cheapTaubesEucl}
\nabla^2 h - 2 \tanh \frac{h}{2}=0
\end{equation}
again away from the vortex cores; we use $\nabla^2\equiv \nabla^2_z:= 4 \partial_z \partial_{\bar z}$. 
Henceforth, we assume the $(\pm)$-vortex cores to be located at the points ${{\bf z}_r^\tpm} \in \RR^2 \cong \CC$ with complex coordinates $z=z^\tpm_r$, where $r=1,\ldots, k_\tpm$ (these locations are repeated if coalescence occurs). 
Equation (\ref{cheapTaubesEucl}) extends to the whole plane as
\begin{equation} \label{TaubesEucl}
\nabla^2 h - 2 \tanh \frac{h}{2} = 4 \pi \left( \sum_{r=1}^{k_\tp} \delta_{{\bf z}_r^{\tp}} -  \sum_{r=1}^{k_\tm} \delta_{{\bf z}_r^{\tm}}\right),
\end{equation}
which we refer to as (the Euclidean) {\em Taubes' equation} in analogy with \cite{JafTau,ManSut}; this can be got directly by plugging  the
Poincar\'e--Lelong formula (see e.g.~\cite{SABK}, p.~42) 
$$
\frac{{\rm i}}{2\pi}F_a+2{\rm i}\, {\partial \bar \partial} h =  \delta_{(\phi)} 
$$
for the meromorphic section $\phi$ into the second vortex equation (\ref{vort2P1}). Though we have fixed the divisor 
\begin{equation} \label{thedivisor}
(\phi)=\sum_{r=1}^{k_\tp} {\bf z}_r^\tp- \sum_{r=1}^{k_\tm} {\bf z}_r^\tm
\end{equation}
so far, we observe that (\ref{TaubesEucl}) also makes sense as an equation on the product ${\sf M}_{(k_\tp,k_\tm)}^{\PP^1}(\RR^2) \times \RR^2$.

The gist of Theorem 11.3.1 in \cite{Yan} is that (\ref{TaubesEucl}) has a unique solution $h$ with $h(z) \rightarrow 0$ exponentially
fast as $|z|\rightarrow\infty$, for arbitrarily fixed configurations of $({\pm})$-vortex cores on $\Sigma=\RR^2$ with disjoint supports --- in agreement with (\ref{moduliP1}). Note that in this case the global coordinate $z$
provides global coordinates on the moduli space, for we have global identifications
${\rm Sym}^{k_\tpm}(\RR^2)\equiv \CC^{k_\tpm}$ assigning to $k_\tpm$ unordered complex numbers $z^\tpm_r\in \CC$  the $k_\tpm$ complex coefficients of the monic polynomial $\prod_{r=1}^k(z-z^\tpm_r)$. If $k_\tpm>1$, the $z^\tpm_r$ themselves only  define local coordinates in the open dense subset where no coalescence of ($\pm$)-vortices occur, but for many purposes they are less cumbersome to work with.

Consider a curve of solutions to (\ref{vort1P1}) and (\ref{vort2P1}) along which there are $k_\tpm$ distinct vortex cores which move along trajectories $t\mapsto z_r^{\tpm}(t)$. Following \cite{Sam,ManSut}, we 
define $\chi$ (up to addition of integral multiples of $2\pi$) such that $u={\rm e}^{\frac{1}{2} h + {\rm i}\chi}$, and then $\eta$ through
\begin{equation} \label{eta}
\dot u =: \eta u,
\end{equation}
so 
\begin{equation} \label{etaexpl}
\eta = \frac{1}{2} \dot h + {\rm i} \dot \chi.
\end{equation}
Note that (\ref{etaexpl}) determines $\eta$ uniquely, since $\dot \chi$ is unambiguous. In global terms, the quantity $\eta$ should be understood as a 1-{\em current} (see \cite{GriHar}, Chapter 3) on ${\sf M}^{\PP^1}_{(k_\tp,k_\tm)}(\Sigma)\times \Sigma$ which evaluates on vector fields over ${\sf M}^{\PP^1}_{(k_\tp,k_\tm)}(\Sigma)$ to yield 0-currents with singularities along the {\em discriminant locus}
$$
\mathcal{D}_{(k_\tp,k_\tm)} := \left\{ ((\phi),{\bf y}) \in  {\sf M}^{\PP^1}_{(k_\tp,k_\tm)}(\Sigma) \times \Sigma \, | \, {\bf y} \in {\rm supp}\, (\phi) \right\}.
$$
This yields a second description of tangent vectors to the moduli space, alternative to the pairs
$(\dot a, \dot {\bf u})$ of Section~\ref{sec:L2metric}, which is more convenient for our present purposes. 
To be more concrete, we will describe the evaluation of $\eta$  at each tangent vector via a differential equation that it satisfies, and then how to
determine it directly from a solution of Taubes' equation (\ref{TaubesEucl}).

Fixing the divisor (\ref{thedivisor}), the pairs $(\dot h, \dot \chi)$ are on the same footing as the previous
$(\dot a, \dot u)$, that is: $\dot h$ satisfies the linearised Taubes' equation at $(\phi)$,
$$
\nabla^2 \dot h - \sech^2 \frac{h}{2}\, \dot h =0
$$
away from the cores, together with (\ref{Gauss}), which in view of (\ref{solvevort1}) becomes 
$$
{\rm div}\, \dot a = \nabla^2 \dot \chi = \frac{4|u|^2}{(1+|u|^2)^2} \dot \chi = \sech^2 \frac{h}{2} \, \dot \chi. 
$$
Hence away from the cores (or more accurately, the discriminant locus), $\eta$ evaluated at $(\dot h,\dot \chi)$ satisfies
\begin{equation}\label{PDEetaaway}
\nabla^2 \eta - \sech^2 \frac{h}{2}\, \eta = 0.
\end{equation}
In complete analogy with Proposition 5.1 in Chapter III of \cite{JafTau}, we can show that there is a representation of the form
\begin{equation}\label{TaubesFact}
u(z)=(z-z_r^\tpm)^{\pm 1}{v(z)}
\end{equation}
in some neighbourhood of $z=z_r^\tpm$, with $v$ smooth and satisfying $v(z^\tpm_r)\ne 0$. 
This leads to the asymptotics
\begin{equation} \label{asympteta}
\eta(z) = \frac{\mp \dot z_r^\tpm}{z-z_r^\tpm} + O(1)\quad \text { as } z\rightarrow z^\tpm_r.
\end{equation}
In this equation we are using $\dot z_r^\tpm$ to denote  complex coordinates for tangent vectors induced by the $z_r^\tpm$ over their local dense coordinate chart. This is our third (and most direct) representation of the tangent vectors, simply in terms of moduli coordinates.

Recalling that
$$
\nabla^2 \log |z-z_r^\tpm|^2 = 4 \pi \delta_{{\bf  z}_r^\tpm}(z),
$$
leading to
$$
\nabla^2 \left( \frac{\mp 1}{z-z_r^\tpm}\right) = \mp 4 \pi \left( \partial_z \delta_{{\bf z}_r^\tpm}\right) (z),
$$
we deduce that the global version of (\ref{PDEetaaway}) is
\begin{equation}\label{Taubeseta}
\nabla^2 \eta - \sech^2 \frac{h}{2} \, \eta = 4 \pi \left( \sum_{r=1}^{k_\tp} \dot z_r^\tp  \frac{\partial }{ \partial {z_r^\tp} } \delta_{{\bf z}_r^\tp} -  \sum_{r=1}^{k_\tm} \dot z_r^\tm \frac{\partial} {\partial {z_r^\tm}} \delta_{{\bf z}_r^\tm} \right).
\end{equation}
Comparing this with Taubes' equation (\ref{TaubesEucl}), we conclude that a solution $\eta$ to (\ref{Taubeseta}) can be calculated from the unique (exponentially decaying) solution $h$ to (\ref{TaubesEucl}) as
\begin{equation} \label{soleta}
\eta = \sum_{r=1}^{k_\tp} \dot z_r^\tp \frac{\partial h}{\partial z_r^\tp} +  \sum_{r=1}^{k_\tm} \dot z_r^\tm \frac{\partial h}{\partial z_r^\tm}.
\end{equation}

Differentiating (\ref{solvevort1}) with respect to the parameter $t$ yields
$$
\dot a_{\bar z} = - {\rm i} \,\partial_{\bar z} \eta
$$
away from the cores, whence the kinetic energy in (\ref{kinetic}) can be written as
\begin{eqnarray*}
{\sf E}_{\rm kin}&=& \frac{1}{2} \int_{\RR^2}\left( 4 |\partial_{\bar z} \eta|^2 + \frac{4|u|^2|\eta|^2}{(1+|u|^2)^2}\right) \\
&=& \frac{1}{2}\int_{\RR^2} \left( 4 \partial_z \bar \eta \, \partial_{\bar z}\eta + \sech^2 \frac{h}{2}\, |\eta|^2\right).
\end{eqnarray*}

We split $\RR^2$ into the union of disks $$D_\eps:=\bigcup_{\sigma=\pm}\bigcup_{r=1}^{k_\sigma} B_\eps({\bf z}_r^\sigma),$$ which are disjoint for $\eps$ sufficiently small, and its complement.
Since the integrand in ${\sf E}_{\rm kin}$ is smooth everywhere,
\begin{eqnarray}
{\sf E}_{\rm kin}&=&\lim_{\eps \rightarrow 0} \frac{1}{2}\left( \int_{D_\eps} + \int_{\RR^2\setminus D_\eps} \right) \left( 4 \partial_z \bar \eta \, \partial_{\bar z}\eta + \sech^2 \frac{h}{2}\, |\eta|^2\right) \nonumber \\
&=& \lim_{\eps \rightarrow 0} \frac{1}{2}  \int_{\RR^2\setminus D_\eps}  \left( 4 \partial_z \bar \eta \, \partial_{\bar z}\eta + \sech^2 \frac{h}{2}\, \bar \eta \eta \right)  \nonumber \\
&=&  \lim_{\eps \rightarrow 0}  \left[ 2\int_{\RR^2\setminus D_\eps} \partial_z(\bar \eta\, \partial_{\bar z} \eta) -\frac{1}{2} \int_{\RR^2\setminus D_\eps} \bar\eta\left( \nabla^2 \eta - \sech^2 \frac{h}{2}\, \eta\right)\right]  \nonumber \\
&=& 2 \lim_{\eps \rightarrow 0}  \int_{\RR^2\setminus D_\eps} \partial_z(\bar \eta\, \partial_{\bar z} \eta)  \nonumber \\
&=& -{\rm i} \lim_{\eps \rightarrow 0} \sum_{\sigma=\pm} \sum_{r=1}^{k_\sigma}\oint_{\partial B_\eps({\bf z}_r^\sigma)} \bar\eta\, \bar \partial \eta, \label{neatformula}
\end{eqnarray}
where the contour integrals are taken anticlockwise, and $\bar \partial$ is the Cauchy--Riemann operator. Essentially, in the last step we employed Stokes's theorem to express the squared norm (in  $g_{L^2}$) of tangent vectors represented by $\eta$ as a residue of the $(0,1)$-current  $\bar\eta\, \bar \partial \eta$ at the support of the divisor $(\phi)$
that specifies the fibre of ${\rm T}{\sf M}^{\PP^1}_{(k_\tp,k_\tm)}(\Sigma)$ on which $\eta$ is evaluated. 

The neat formula (\ref{neatformula}) for the squared norm in the  $L^2$ metric can be recast rather more explicitly in terms of coordinates on the moduli space. For this, we expand $h$ in a neighbourhood of $z=z_s^\tpm$ for fixed moduli, in analogy with \cite{ManSut}, as
\begin{eqnarray}\label{marcar}
\pm h(z) &= &\log |z-z_s^\tpm|^2+a_s^\tpm + \frac{1}{2}\bar b^\tpm_s (z-z_s^\tpm) + \frac{1}{2} b^\tpm_s (\bar z-\bar z_s^\tpm)  \nonumber \\
&& + \bar c_s^\tpm (z-z_s^\tpm)^2 + d_s^\tpm |z-z_s^\tpm|^2 + c_s^\tpm (\bar z-\bar z_s^\tpm)^2 + O(|z-z_s^\tpm|^3) \label{expandh} \\
&=&\pm [h_{\rm head} + h_{\rm quad}] +  O(|z-z_s^\tpm|^3),\nonumber
\end{eqnarray}
where $\pm h_{\rm head}$ and $\pm h_{\rm quad}$ refer to the terms shown explicitly on the first and second lines respectively; $a_s^\tpm, b^\tpm_s, c_s^\tpm$ and $d_s^\tpm$ are coefficients which depend, in some
undetermined way, on the location of the vortex and antivortex cores. Now
\begin{eqnarray*}
\nabla^2 h_{\rm head} &=& \pm 4 \pi \delta_{z_s^\tpm}, \\
\nabla^2 h_{\rm quad} &=& \pm 4 d_s^\tpm
\end{eqnarray*}
and
$$
\lim_{z\rightarrow z_s^\tpm} 2 \tanh \frac{h}{2} = \mp 2,
$$
so we deduce that
$$
d_s^\tpm = - \frac{1}{2}.
$$
Observe that, as $z\rightarrow z_s^\tpm,$
\begin{eqnarray*}
\partial_{\bar z} \left( \frac{\partial h}{\partial z_r^\tpm}\right) &= & \pm \frac{1}{2} \frac{\partial b_s^\tpm}{\partial z_r^\tpm} \pm \frac{1}{2}\delta_{r,s}+O(|z-z_s^\tpm|),\\
\partial_{\bar z} \left( \frac{\partial h}{\partial z_r^\tmp}\right) &= &\pm \frac{1}{2} \frac{\partial b_s^\tpm}{\partial z_r^\tmp} +O(|z-z_s^\tpm|);\\
\end{eqnarray*}
hence we get from (\ref{soleta}), at fixed core positions,
\begin{eqnarray*}
\partial_{\bar z}\eta &=& \sum_{r=1}^{k_\tp} \dot z^\tp_r \partial_{\bar z} \left( \frac{\partial h}{\partial z_r^\tp}\right) +  \sum_{r=1}^{k_\tm} \dot z^\tm_r \partial_{\bar z} \left( \frac{\partial h}{\partial z_r^\tm}\right)\\
&=&	\pm \frac{1}{2} \sum_{r=1}^{k_\tpm} \dot z_r^{\tpm} \left(\frac{\partial b_s^\tpm}{\partial z_r^\tpm} + \delta_{r,s} \right) \pm \frac{1}{2} \sum_{r=1}^{k_\tmp} \dot z_r^\tmp \frac{\partial b_s^\tpm}{\partial z_r^\tmp} + O(|z-z_s^\tpm|).
\end{eqnarray*}
Using this together with (\ref{asympteta}), we obtain
$$
\lim_{\eps \rightarrow 0}  \oint_{\partial B_\eps(z_s^\pm)} \bar\eta \, \bar\partial \eta= \pi {\rm i} \dot{\bar z}^\tpm_s
\left[ \sum_{r=1}^{k_\tpm} \dot z_r^\tpm \left( \frac{\partial b_s^\tpm}{\partial z_r^\tpm} + \delta_{r,s}\right) + \sum_{r=1}^{k_\tmp} \dot z_r^\tmp \frac{\partial b_s^\tpm}{\partial z_r^\tmp} \right],
$$
and substitution in (\ref{neatformula}) yields the result
\begin{eqnarray}
{\sf E}_{\rm kin} &=& \pi \left(  \sum_{r=1}^{k_\tp}|\dot z_r^\tp|^2 +  \sum_{r=1}^{k_\tm}|\dot z_r^\tm|^2 
+ \sum_{r,s=1}^{k_\tp} \frac{\partial b_s^\tp}{\partial z_r^\tp} \dot z_r^\tp \dot{\bar z}_s^\tp
+ \sum_{r,s=1}^{k_\tm} \frac{\partial b_s^\tm}{\partial z_r^\tm} \dot z_r^\tm \dot{\bar z}_s^\tm \right.  \nonumber \\
&& \left. \qquad+ \sum_{r=1}^{k_\tp} \sum_{s=1}^{k_\tm} \frac{\partial b_s^\tm}{\partial z_r^\tp} \dot z_r^\tp \dot{\bar z}_s^\tm
+\sum_{r=1}^{k_\tm} \sum_{s=1}^{k_\tp} \frac{\partial b_s^\tp}{\partial z_r^\tm} \dot z_r^\tm \dot{\bar z}_s^\tp \right).  \label{EkinStrSam}
\end{eqnarray}

Since ${\sf E}_{\rm kin}$ is a real quantity,  the following equalities must hold for all $r,s$:
\begin{equation} \label{reality}
\frac{\partial b_s^\tpm}{\partial z_r^\tpm}= \frac{\partial \bar b_r^\tpm}{\partial \bar z_s^\tpm} 
\qquad \text{and}\qquad
\frac{\partial b_s^\tmp}{\partial z_r^\tpm}= \frac{\partial \bar b_r^\tpm}{\partial \bar z_s^\tmp}. 
\end{equation}
This means that the Riemannian metric on ${\sf M}^{\PP^1}_{(k_\tp,k_\tm)}(\RR^2)$ corresponding to (\ref{EkinStrSam}), which can be written in the dense open stratum  where no cores coalesce as
\begin{eqnarray}
g_{L^2} &=&2 \pi \left(  \sum_{r=1}^{k_\tp}|{\rm d} z_r^\tp|^2 +  \sum_{r=1}^{k_\tm}|{\rm d} z_r^\tm|^2 
+ \sum_{r,s=1}^{k_\tp} \frac{\partial b_s^\tp}{\partial z_r^\tp} {\rm d} z_r^\tp {\rm d}{\bar z}_s^\tp
+ \sum_{r,s=1}^{k_\tm} \frac{\partial b_s^\tm}{\partial z_r^\tm} {\rm d} z_r^\tm {\rm d}{\bar z}_s^\tm \right. \nonumber \\
&& \left. \qquad+ \sum_{r=1}^{k_\tp} \sum_{s=1}^{k_\tm} \frac{\partial b_s^\tm}{\partial z_r^\tp} {\rm d} z_r^\tp {\rm d}{\bar z}_s^\tm+
\sum_{r=1}^{k_\tm} \sum_{s=1}^{k_\tp} \frac{\partial b_s^\tp}{\partial z_r^\tm} {\rm d} z_r^\tm {\rm d}{\bar z}_s^\tp \right), \label{StrSamgL2}
\label{metrichard}
\end{eqnarray}
is Hermitian; its $(1,1)$-form \cite{GriHar} can be cast locally as
$$
\omega_{L^2} = {\rm i}\pi \left( \sum_{r=1}^{k_\tp} {\rm d}z_r^\tp \wedge {\rm d}\bar z_r^\tp +  \sum_{r=1}^{k_\tm} {\rm d}z_r^\tm \wedge {\rm d}\bar z_r^\tm + \partial b  \right),
$$
where
$$
b:= \sum_{s=1}^{k_\tp} b_s^\tp {\rm d}\bar z_s^\tp +  \sum_{s=1}^{k_\tm} b_s^\tm {\rm d}\bar z_s^\tm
$$
is a $(0,1)$-form that is also defined over that stratum. We now have
$$
{\rm d}\omega_{L^2} = {\rm i} \pi {\rm d}\partial b = - {\rm i}\pi\partial \bar \partial b\,;
$$
but
$$
\bar \partial b = \sum_{r=1}^{k_\tp} \sum_{s=1}^{k_\tp}\frac{\cd b_s^\tp}{\cd \bar z_r^\tp}{\rm d}\bar z_r^\tp \wedge {\rm d}\bar z_s^\tp  +
\sum_{r=1}^{k_\tp} \sum_{s=1}^{k_\tm} \left( \frac{\partial b_s^\tp}{\partial \bar z_r^\tm} - \frac{\partial b_r^\tm}{\partial \bar z_s^\tp}\right) 
{\rm d}\bar z_r^\tm \wedge {\rm d}\bar z_s^\tp  +
\sum_{r=1}^{k_\tm} \sum_{s=1}^{k_\tm}\frac{\cd b_s^\tm}{\cd \bar z_r^\tm}{\rm d}\bar z_r^\tm \wedge {\rm d}\bar z_s^\tm 
$$
and, by virtue of (\ref{reality}),
$$
\frac{\cd^2 b_s^\tpm}{\cd z_q^\tpm\cd \bar z_r^\tpm}=\frac{\cd^2 \bar b_q^\tpm}{\cd \bar z_s^\tpm\cd \bar z_r^\tpm},\qquad
\frac{\partial}{\partial z_q^\tpm} \left( \frac{\partial b_s^\tp}{\partial \bar z_r^\tm} - \frac{\partial b_r^\tm}{\partial \bar z_s^\tm} \right) = 
\frac{\partial^2 \bar b_q^\tpm}{\partial \bar z_r^\tm \partial \bar z_s^\tp} - \frac{\partial^2 \bar b_q^\tpm}{\partial \bar z_s^\tp \partial \bar z_r^\tm} =0.
$$
We deduce that $\partial \bar \partial b =0$ on the stratum of no coalescence; since this stratum is an open dense subset of the moduli space, and we know that $\omega_{L^2}$ is globally regular, we conclude that ${\rm d}\omega_{L^2}=0$ everywhere. Thus we have a consistency check that the metric $g_{L^2}$ is K\"ahler.

We remark that the localization argument presented above readily extends to the case where $\Sigma$ is compact, or is given a non-Euclidean metric. One works over a dense open subset $U\subseteq\Sigma$, homeomorphic to a disk, on which $g_\Sigma=\Omega(z) \d z \d\bar{z}$, and repeats the argument, introducing factors of $\Omega$ where required. The final result may be economically expressed as follows: on the stratum where none of the $(\pm)$-vortex positions $z_r^{\tpm}$ coincide (and all lie in $U$), define $(Z_1,Z_2,\ldots,Z_{k_\tp+k_\tm})=(z_1^\tp,\ldots,z_{k_\tp}^\tp,z_1^\tm,\ldots,z_{k_\tm}^\tm)$ and
$(B_1,B_2,\ldots,B_{k_\tp+k_\tm})=(b_1^\tp,\ldots,b_{k_\tp}^\tp,b_1^\tm,\ldots,b_{k_\tm}^\tm)$, defined as in (\ref{marcar}). Then the $L^2$ metric on
${\sf M}_{(k_\tp,k\tm)}^{\PP^1}(\Sigma)$ is
\beq
\label{metriceasy}
g_{L^2}=2\pi\sum_{r,s=1}^{k_\tp+k_\tm}\left(\Omega(Z_r)\delta_{rs}+\frac{\cd B_s}{\cd Z_r}\right)\d Z_r\d\bar{Z}_s.
\eeq
Note that this coincides with the rather bulkier expression (\ref{metrichard}) in the case $\Omega\equiv 1$.

\section{ $L^2$ geometry of the  Euclidean gauged $\PP^1$ model} \label{sec:P1Eucl}\news

This section is dedicated to the study of the  $L^2$ metric on the moduli space ${\sf M}_{(1,1)}^{\PP^1}(\R^2)$ in the situation where $\Sigma= \RR^2$ is given the Euclidean metric. We fix $\tau = 0$ throughout.

\subsection{The  $L^2$ metric on the space of vortex-antivortex pairs} \label{sec:pairsC}

We take $k_\tp = k_\tm =1$, which is the case of BPS vortex-antivortex pairs that the title of our paper refers to. In this situation we can simplify notation by
omitting the $r,s$ subscripts that appeared throughout the discussion in Section~\ref{sec:meroStrSam}; we shall also write the former $\pm$ superscripts distinguishing vortices from antivortices as {sub}scripts.

To determine the  $L^2$ metric on ${\sf M}^{\PP^1}_{(1,1)}(\RR^2)$ after the meromorphic Strachan--Samols localization discussed in the previous section, the coefficients $b_\tp (z_\tp, z_\tm)$ and $b_\tm (z_\tp, z_\tm)$ are required. We define a function $b: (0,\infty) \rightarrow \RR$ by
$$
b(\eps) := b_\tp (\eps, -\eps).
$$
As transpires from this equation, we will be using $\eps$ to denote half  the  distance separating the vortex and antivortex cores on the plane. It is clear from (\ref{TaubesEucl}) and (\ref{expandh}) that 
\begin{equation} \label{simp1}
b_\tm (z_\tp, z_\tm) = - b_\tp(z_\tp, z_\tm)
\end{equation}
and
\begin{equation} \label{simp2}
b_\tp (z_\tp,z_\tm) = \frac{z_\tp-z_\tm}{|z_\tp - z_\tm|} \, b\left( \frac{1}{2}|z_\tp - z_\tm| \right);
\end{equation}
hence
\begin{equation}\label{Lambda}
\Lambda(\eps) := \frac{\partial b_\tp}{\partial z_\tp} = \frac{1}{4} \left( b'(\eps)+ \frac{b(\eps)}{\eps}  \right) = \frac{1}{4\eps} \left. \frac{{\rm d}}{{\rm d}\eps} (\eps b(\eps)) \right|_{\eps = \frac{1}{2}|z_\tp-z_\tm|}.
\end{equation}

According to (\ref{StrSamgL2}), the  $L^2$ metric on ${\sf M}^{\PP^1}_{(1,1)}(\RR^2)$ can be written in the (global) coordinates $z_\tp, z_\tm$ as
\begin{eqnarray*}
g_{L^2} &=& 2\pi \left( |{\rm d}z_\tp|^2 +  |{\rm d}z_\tm|^2
+ \frac{\partial b_\tp}{\partial z_\tp}|{\rm d}z_\tp|^2 + \frac{\partial b_\tm}{\partial z_\tm}|{\rm d}z_\tm|^2  \right. \\
&&\qquad\left. 
+\frac{\partial b_\tm}{\partial z_\tp} {\rm d}z_\tp {\rm d}\bar z_\tm + \frac{\partial b_\tp}{\partial z_\tm} {\rm d}z_\tm {\rm d}\bar z_\tp \right).
\end{eqnarray*}
Observe that we have from (\ref{simp1}) and (\ref{simp2})
$$
\frac{\partial b_\tm}{\partial z_\tp} = - \frac{\partial b_\tp}{\partial z_\tp}, \quad 
\frac{\partial b_\tp}{\partial z_\tm} = - \frac{\partial b_\tp}{\partial z_\tp}\quad \text{and} \quad
\frac{\partial b_\tm}{\partial z_\tm} =  \frac{\partial b_\tp}{\partial z_\tp}, \quad 
$$
and hence
\beq\label{ranhas}
g_{L^2} = 2 \pi \left[  |{\rm d}z_\tp|^2 +  |{\rm d}z_\tm|^2 + \Lambda \left( \frac{1}{2}|z_\tp - z_\tm| \right) |{\rm d}z_\tp - {\rm d}z_\tm|^2  \right].
\eeq

The locus ${\sf M}^{\PP^1 (0)}_{(1,1)}(\RR^2)$ of {\em centred} pairs, for which $z_\tp = -z_\tm = \eps {\rm e}^{{\rm i} \psi}$, consists of the fixed points of the
isometry induced by the involution
$$
h(z) \mapsto -h(-z),
$$
and is therefore a geodesic submanifold (see e.g. Lemma 4.2 in \cite{RomP1}) --- it is a surface of revolution with induced metric
\begin{equation} \label{confFactor}
g_{L^2}^{(0)} = 2 \pi \left( 2+ b'(\eps) + \frac{b(\eps)}{\eps}\right)\left({\rm d}\eps^2 + \eps^2 {\rm d}\psi^2 \right)=:
F(\eps)(\d\eps^2+\eps^2\d\psi^2).
\end{equation}

In the rest of Section~\ref{sec:P1Eucl}, we will be studying the conformal factor $F(\eps)$, which completely determines the  $L^2$ geometry of $M_{(1,1)}^{\PP^1}(\RR^2)$ --- compare ({\ref{Lambda}}) and (\ref{confFactor}). The most pressing questions concern its asymptotic behaviour. Though both limiting cases $\eps \rightarrow 0$ and $\eps \rightarrow \infty$ are interesting, the former is more novel: it corresponds to the boundary of the moduli space arising from target nonlinearity that we alluded to in the Introduction.

At least in principle, all information about $F(\eps)$ can be extracted from Taubes's equation (\ref{TaubesEucl}). The following regularisation will be convenient. First of all, by rotational symmetry we can restrict to the situation where $z_\tpm=\pm \eps$ are real. Let us define $\hat h: \CC \rightarrow \RR$ via the equation
$$
h(z) =: \log \left( \frac{|z-\eps|^2}{|z+\eps|^2}\right) + \hat h(z/\eps).
$$
It follows from (\ref{TaubesFact}) that this $\hat h$ is regular everywhere on $\CC$, and it satisfies the PDE
\begin{equation}\label{normTaubes}
\nabla^2_w \hat h - 2 \eps^2 \frac{|w-1|^2 {\rm e}^{\hat h} - |w+1|^2} {|w-1|^2 {\rm e}^{\hat h} + |w+1|^2} =0,
\end{equation}
where we introduced the normalised complex coordinate $w:= z/\eps$.

It is clear that the symmetries 
$$\hat h(\bar w) = \hat h(w) \quad \text{ and }\quad\hat h(-w) = -\hat h(w)$$
hold, so $\hat h$ vanishes on the imaginary axis. 
This allows us to concentrate on solving the one-parameter family of nonlinear elliptic PDEs (\ref{normTaubes}) as a boundary value problem in the right half-plane (conformally a disk) 
\begin{equation} \label{RHP}
\mathcal{H}:= \{w\in \CC: {\rm Re}(w) >0 \}
\end{equation}
with Dirichlet boundary conditions. Moreover, since the solutions also have reflexion symmetry in the real axis, it suffices to solve the equation  in the upper right quadrant of the complex plane,
adopting suitable boundary conditions (when discretising on a grid, say) along the positive real axis. Recall that the results of Yang \cite{YanSOMC} guarantee existence and uniqueness of a smooth solution to this problem for any $\eps \in (0,\infty)$.
 
\subsection{Self-similarity and asymptotic geometry at small separation} \label{sec:Fsmalleps}

Here, we report on our study of the conformal factor $F(\eps)$ in (\ref{confFactor}) as $\eps \rightarrow 0$.

First-hand information on the family of boundary value problems we have described for the nonlinear elliptic equation (\ref{normTaubes}) can be obtained from numerical methods.
We use a Newton--Raphson scheme to solve the equation on the upper right quadrant (at fixed $\eps$) on a $100 \times 100$ grid with the standard 5-point Laplacian, adopting a variety of lattice spacings
in the range 0.01 (which works for moderate to large $\eps$) to 2 (required for very small $\eps$). Once we have a solution $\hat h_\eps(x+{\rm i}y)$, where the subscript is used to
emphasise the $\eps$-dependence, we extract the coefficient
\begin{equation}\label{bfromhhat}
b(\eps) = \frac{1}{\eps} \left[\left. \frac{\partial \hat{h}_\eps} {\partial x}\right|_{x+{\rm i}y=1} - 1\right].
\end{equation}
Figure~\ref{fig:hprofiles} shows plots of $x\mapsto \hat h_\eps (x+{\rm i}0)$, for a decreasing sequence of values of $\eps$. Careful inspection of these led us to conjecture that $\hat h_\eps$ is {\em asymptotically self-similar} for small $\eps$, in
the 
{following sense. For each $\eps>0$ define $f_\eps:\R\ra\R$  by
$$
f_\eps(x):= \frac{1}{\eps} \hat h_\eps((x+{\rm i}0)/\eps).
$$
We conjecture that, as $\eps\ra 0$, $f_\eps$ converges uniformly to some fixed profile
$f_*$. Numerical evidence for this conjecture is presented in Figure~\ref{fig:selfsim}, which plots 
$f_\eps$ for a decreasing sequence of values of $\eps$: it can be seen that the curves approach the graph of
a particular fixed function $f_*$.}

\begin{figure}[htb]
\begin{center}
\includegraphics[scale=0.5]{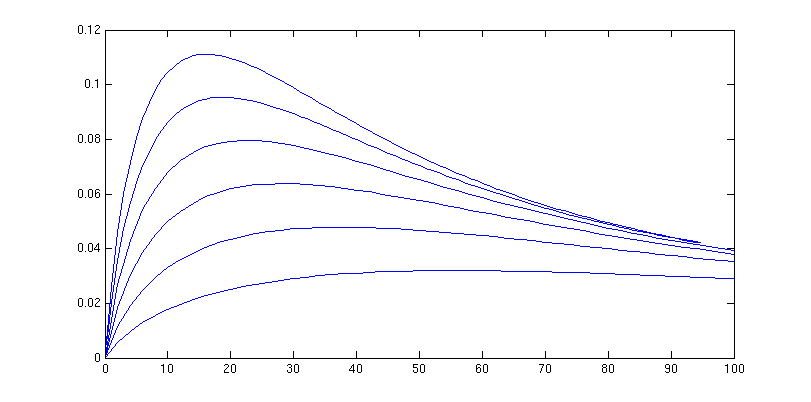}
\end{center}
\vspace{-10pt}
\caption{Numerical solutions of the regularized Taubes equation, $\hat{h}_\eps(x+{\rm i}0)$, for small values of $\eps$ ($\eps=0.07$ to $0.02$ in steps of $-0.01$ from top to bottom).}
\vspace{-195pt} \hspace{70pt} $\hat h_\eps (x)$ \\[125pt]
\vspace{0pt} \hspace{355pt} $x$
\vspace{40pt}
\label{fig:hprofiles}
\end{figure}

\begin{figure}[htb]
\begin{center}
\includegraphics[scale=0.5]{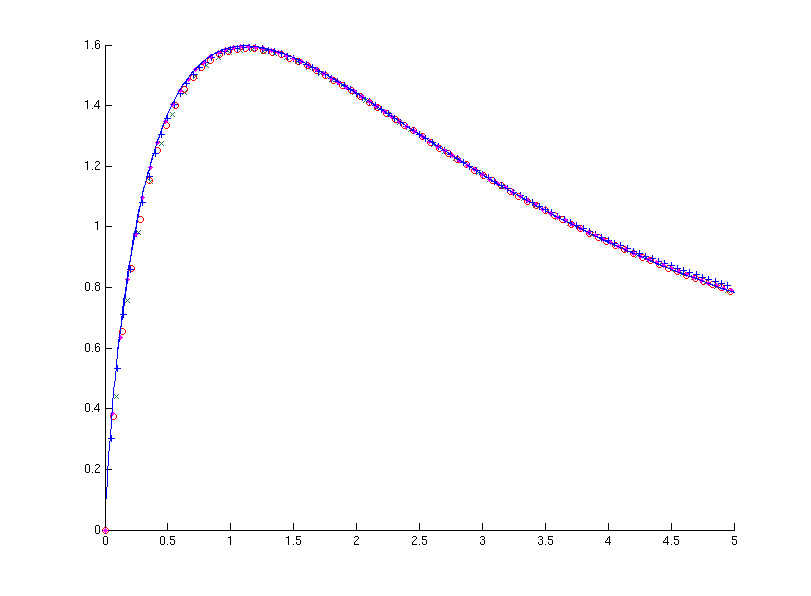}
\end{center}
\vspace{-235pt} \hspace{70pt} $f_\eps(x)$ \\[190pt]
\vspace{0pt} \hspace{360pt} $x$
\vspace{5pt}
\caption{Self similarity of $\hat{h}_\eps$: plots of $f_\eps(x):=\eps^{-1}\hat{h}_\eps(\eps^{-1}x)$ for $\eps=0.09$ ($\times$),
$\eps=0.07$ ($\circ$), $\eps=0.05$ (+) and $\eps=0.03$ ($\bullet$). The solid curve depicts $f_*$, the conjectured limit of $f_\eps$ as $\eps\ra 0$ (see Equation (\ref{fstarconj})).}
\label{fig:selfsim}
\end{figure}

We now explain how 
{an explicit formula for the conjectured limit $f_*$ can be obtained.
We first extend the defnition of $f_\eps$ to the whole complex plane,
$$
f_\eps(z):=\eps^{-1}\hat h_\eps(\eps^{-1}z),
$$
and note that the regularized Taubes equation (\ref{normTaubes}), when rewritten in terms of $f_\eps$
and $z=\eps w$, becomes
\beq\label{soraha}
\nabla^2f_\eps(z) -\frac{2}{\eps} \frac{|z-\eps|^2{\rm e}^{ \eps f_\eps( z)}  -|z+\eps|^2}{|z-\eps|^2 {\rm e}^{ \eps f_\eps( z)} +|z+\eps|^2}=0.
\eeq
We now assume that $f_\eps=f_*+\eps f_{(1)}+\eps^2 f_{(2)}+\cdots$, and that (\ref{soraha}) may be solved order by order in $\eps$. The leading order equation (order $\eps^0$) is
\bea
\nabla^2 f_*&=&2 \left. \frac{\rm d}{\rm d\eps} \left(   \frac{|z-\eps|^2{\rm e}^{ \eps f_*( z)}  -|z+\eps|^2}{|z-\eps|^2 {\rm e}^{ \eps f_*( z)} +|z+\eps|^2} \right)  \right|_{\eps=0} \nonumber \\
&=&
 f_*(z) -\frac{2 (z+\bar z)}{|z|^2}, \label{screenedPoisson}
\eea
which is an equation for $f_*$ alone.}

At this point, we want to solve the screened Poisson equation (\ref{screenedPoisson}) for $f_*$.
Since $f_*$ is a pointwise limit of functions odd under reflexion in the $y$ axis, it too has this symmetry, so we may solve (\ref{screenedPoisson}) on the right half-plane (\ref{RHP}) with Dirichlet boundary conditions over the imaginary axis. 
{This} can be done straightforwardly by separating variables in polar coordinates. 
We write
$z=:r{\rm e}^{{\rm i}\theta}$ and make the {ansatz } $f_*(r,\theta)=R(r)\Theta(\theta)$. Substitution in (\ref{screenedPoisson}) leads to
\begin{eqnarray*}
\nabla^2 f_* - f_* &=&\frac{\partial^2 f_*}{\partial r^2} + \frac{1}{r}\frac{\partial f_*}{\partial r} + \frac{1}{r^2}\frac{\partial^2 f_*}{\partial \theta^2}\\
&=&R''(r)\Theta(\theta) + \frac{1}{r} R'(r)\Theta(\theta)+\frac{1}{r^2} R(r) \Theta''(\theta) \\
&=& -\frac{4 \cos \theta}{r}.
\end{eqnarray*}
{Clearly $\Theta=\cos$, whence $R$ satisfies
$$
r^2 R'' + r R' - (r^2+1) R = -4r 
$$
an inhomogeneous second-order ODE  of modified Bessel type. The unique solution to this ODE with
$R(0)=0$ and $\lim_{r\ra\infty}R(r)=0$}
is
$$
R(r)=\frac{4}{r}\left(1-r K_1(r)\right),
$$
where $K_1$ denotes a modified Bessel function of the second kind~(see~\cite{WhiWat}, p.~373).
Thus we obtain the explicit solution for the profile $f_*$
\beq\label{fstarconj}
f_*(z)=\frac{2(z+\bar z)}{|z|^2}(1-|z| K_1(|z|)).
\eeq
Changing back to the coordinate $w=z/\eps$, we 
{obtain the more precise self-similarity conjecture that}
\begin{equation} \label{strongerselfsim}
\hat h_\eps(x) \ra \hat h_*(x):=\frac{4 }{x} (1-\eps x K_1(\eps x)) \qquad \text{uniformly as }\eps \rightarrow 0.
\end{equation}
The right-hand side of  (\ref{strongerselfsim}) is the function that we plotted on top of our numerical data in Figure~\ref{fig:selfsim}. It provides a very good fit for the values of $\eps$ in the interval $[0.03,0.09]$, with errors of the order of 0.1\% estimated at the single maximum. 

Equipped with (\ref{strongerselfsim}), 
{ and assuming that the convergence $\hat h_\eps\ra \hat h_*$ is actually uniform in $C^1$ in some neighbourhood of $1$,} we may infer {\em analytic} asymptotics (at small $\eps$) for the crucial function $\eps b(\eps)$ determining the metric on ${\sf M}^{\PP^1}_{(1,1)}(\RR^2)$ using (\ref{bfromhhat}), namely
$$
b(\eps) \approx b_*(\eps)= - \frac{5}{\eps}  +4 \eps K_0 (\eps) + 4 K_1(\eps) \qquad \text{as } \eps \rightarrow 0.
$$
An approximation for the conformal factor $F(\eps)$ can now be calculated as
\begin{equation} \label{asympFsmall}
F(\eps)\approx F_*(\eps) =  4 \pi  \left( 1 +2 K_0(\eps) - 2\eps K_1(\eps)   \right). 
\end{equation}
This function is positive and monotonically decreasing for  $\eps \in (0,2]$, and it blows up as $\eps\rightarrow 0$ with the asymptotics
\beq\label{lilise}
F_*(\eps)=8\pi \left( \log\frac{2}{\eps} -\frac{1}{2} -\gamma \right)+2\pi\left(  3 \log \frac{2}{\eps} + 2-3\gamma\right)\eps^2 + o(\eps^2),
\eeq
where $\gamma$ is the Euler--Mascheroni constant (\cite{WhiWat}, p.~235). This behaviour  leads to bounded lengths of radial geodesics as 
$\int_0^\delta \sqrt{F_*(\eps)}\, {\rm d}\eps < \infty$, approximating geodesics which represent
head-on vortex-antivortex pair collisions in the true  $L^2$ metric for small $\delta >0$. Thus we are led to conjecture that ${\sf M}^{\PP^1}_{(1,1)}(\RR^2)$ is incomplete. We plot this conformal factor in Figure~\ref{fig:conffactor}.

\begin{figure}[htb]
\begin{center}
\includegraphics[scale=0.5]{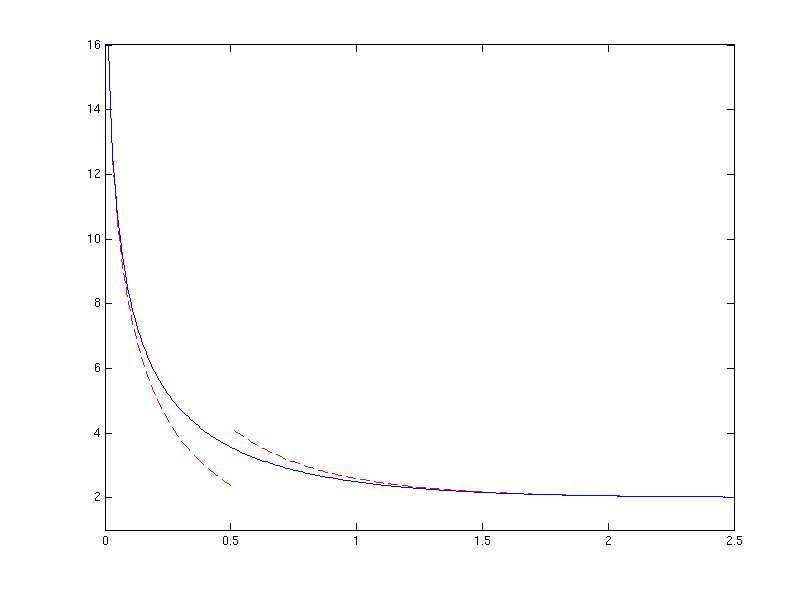}
\end{center}
\vspace{-230pt} \hspace{70pt} $\displaystyle \frac{F(\eps)}{2\pi}$\\[180pt]
\vspace{0pt} \hspace{355pt} $\eps$
\vspace{5pt}
\caption{The conformal factor $F(\eps)/2\pi$ for the metric on the space of centred $(1,1)$ vortex 
pairs. The dashed lines show the conjectured asymptotic forms for small and large $\eps$ (see equations (\ref{lilise}) and (\ref{asymptFlarge}) respectively).}
\label{fig:conffactor}
\end{figure}

{The space $(\M^{\P^1(0)}_{(1,1)}(\C),g^{(0)}_{L^2})$ of centred vortex-antivortex pairs can be isometrically embedded in $\R^3$ as a surface of revolution: see Figure~\ref{fig:cymbal}. We see that
its Gau\ss\ curvature 
$$
{\mathcal K}(\eps) = - \frac{1}{2\eps F(\eps)} \frac{{\rm d}}{{\rm d}\eps} \left( \eps \frac{{\rm d}}{{\rm d} \eps} \log F(\eps)\right)
$$
is positive in the core region, where the vortices are close to one another, but negative where they are more widely separated, becoming asymptotically flat as $\eps\ra \infty$. The small $\eps$ asymptotic formula $F_*$ for $F$ suggests that
${\mathcal K}(\eps)\ra\infty$ like $1/(16\pi\eps^2|\log(\eps)|^3)$ as $\eps\ra 0$, and that the total Gau\ss\ curvature of the surface is
$$
\int_{\C^\times}{\mathcal K}(\eps)\, F(\eps)\eps\, \d\eps\, \d\psi=\pi\lim_{\eps\ra 0}\frac{\eps F'(\eps)}{F(\eps)}=0.
$$}

\begin{figure}[htb]
\begin{center}
\includegraphics[scale=0.4]{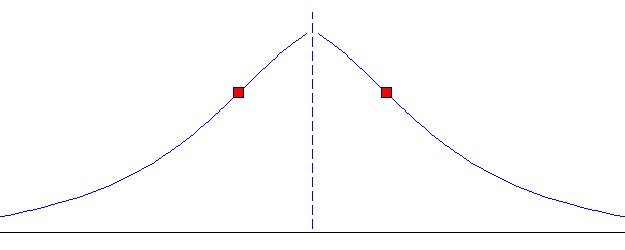}
\end{center}
\caption{Generating curve for the space of centred $(1,1)$ vortices isometrically embedded as a surface of revolution in $\R^3$: the full surface is obtained by rotation about the symmetry axis indicated by the dashed line. The red squares indicate inflexion points of the curve, where the Gauss curvature of the surface changes sign.}
\label{fig:cymbal}
\end{figure}

\subsection{Asymptotics at large separation}

A conjectural asymptotic formula for the conformal factor (\ref{confFactor}) at large $\eps$ can also be obtained, by adapting the point-vortex model described in Section~3 of reference~\cite{ManSpe} to the
gauged $\PP^1$ model.

In this framework, a ($\pm$)-vortex is modelled by a point particle carrying a scalar monopole charge $\pm q$ and magnetic dipole moment $\pm q\textit{\textbf{k}}$ orthogonal to the physical plane $\Sigma=\RR^2$.
Two such point particles, when at rest, exert no net force on one another, because the repulsive force due to the opposite scalar charges is exactly balanced by the attractive force between the opposite magnetic
dipoles~\cite{SpeSIF}. When in relative motion, such point particles do exert velocity-dependent forces on one another, however. The Lagrangian governing the motion of such point particles, of rest mass $2\pi$
({\em not} $\pi$ as in the Abelian Higgs model, see (\ref{Ebound}) with $\tau =0$), moving along trajectories $t\mapsto z_\tpm (t)$, is
\begin{equation} \label{Lagrpoints}
L=\pi (|\dot z_\tp|^2 + |\dot z_\tm|^2) + \frac{q^2}{4\pi} K_0(|z_\tp-z_\tm|)|\dot z_\tp-\dot z_\tm|^2
\end{equation}
up to quadratic order in the velocities. The constant $q$ may be deduced from the large $r$ behaviour of a single vortex, obtained, for example, by solving the
Bogomol'ny\u\i\ equations in a radially symmetric ansatz. 
We find, numerically, that
\begin{equation} \label{qP1}
q\approx -7.1388.
\end{equation}
Using this, we deduce from (\ref{Lagrpoints}) an asymptotic formula for the conformal factor (\ref{confFactor}) at large $\eps$:
\begin{equation} \label{asymptFlarge}
F(\eps) \approx F_{\infty} (\eps) = 2\pi \left( 2+\frac{q^2}{\pi^2} K_0 (2\eps)\right).
\end{equation}
In Figure~\ref{fig:conffactor} we plot these asymptotics against the result for $F(\eps)$ obtained by solving (\ref{normTaubes}) numerically, as well as the asymptotics $F_*(\eps)$ at small $\eps$ in Section~\ref{sec:Fsmalleps}. This provides a satisfactory check of our asymptotics at both ends,
and also indicates where our approximations break down.

Both the argument and the result (\ref{asymptFlarge}) in this section replicate almost verbatim the asymptotics of a  pair of vortices in the Abelian Higgs model (see Equation (3.51) in~\cite{ManSpe}). The only discrepancies
are the factor of two (already mentioned) in the rest mass, the pre-sign of the second terms in brackets in (\ref{asymptFlarge}), and the numerical value (\ref{qP1}) of $q$. We recall that in the Abelian Higgs model one obtains
$q_{\scriptscriptstyle{\rm AH}} \approx -10.6$ instead~\cite{SpeSIF}; in addition, there is an ingenious argument due to Tong~\cite{Ton}, invoking T-duality in configurations of D-branes, which proposes $q_{\scriptscriptstyle{\rm AH}}=-2^{{7}/{4}} \pi \approx -10.57$. One may ask whether such techniques can be adjusted to deal with the gauged $\PP^1$ model, producing an `analytic' version of $q$ that reproduces our estimate (\ref{qP1}) with comparable accuracy.

\section{ $L^2$ geometry of the gauged  $\PP^1$ model on a sphere} \news \label{sec:P1S2}

This section is dedicated to the study of the  $L^2$ geometry of the moduli space ${\sf M}_{(1,1)}^{\PP^1}(S^2_R)$, where $S_R^2$ is the round two-sphere of radius $R$, in the symmetric case $\tau=0$.
We write the metric on $S^2_R$ as
\begin{equation} \label{roundmetric}
g_{S^2_R}= \frac{4 R^2}{(1+|z|^2)^2}{\rm d}z{\rm d}\bar z
\end{equation}
where $z\in\C$ is a stereographic coordinate obtained by projection from the South pole.

We study the vortex equations (\ref{vort1}) and (\ref{vort2}) in the guise of the gauge-invariant Taubes equation
\begin{equation}\label{TaubespairsS2}
\nabla^2_z h - \frac{8 R^2}{(1+|z|^2)^2} \tanh \frac{h}{2} = 4 \pi \left( \delta(z-z_\tp) - \delta(z-z_\tm) \right)
\end{equation}
for $h:\CC \rightarrow \RR \cup\{ \pm \infty\} $ defined  as in (\ref{functionh}), with the vortex placed at $z=z_\tp$ and the antivortex at $z=z_\tm$. As in Section~\ref{sec:pairsC}, it will be convenient to place the $(\pm)$-vortices at $z_\tpm=\pm \eps$ with $\eps >0$ and  regularise (\ref{TaubespairsS2})
by introducing $\tilde h: \CC \rightarrow \RR$ defined by
\begin{equation} \label{tildeh}
h(z) := \tilde{h}(z) + h_{\rm sing} (z) \quad \text{ with } \quad  h_{\rm sing}(z):=  \log \left|  \frac{z - \eps}{z+\eps}\right|^2.
\end{equation}
This function extends smoothly over the $(\pm)$-vortex positions, and the South pole $z=\infty$.
Composing $\tilde{h}$ with the dilation map $\mathcal{R}_\eps:S^2\ra S^2$, $z\mapsto\eps z$, we obtain a smooth function
$$
\hat h:S^2\ra\R,\quad  \hat h:=\tilde h\circ \mathcal{R}_\eps,
$$
which, on our coordinate patch, satisfies the PDE
\begin{equation} \label{PDES2ish}
\nabla^2_w \hat h - \frac{8R^2 \eps^2}{(1+\eps^2 |w|^2)^2} F(w,\hat h) =0
\end{equation}
where
\begin{equation} \label{functionF}
\quad F(w,v):= \frac{|w-1|^2 {\rm e}^v - |w+1|^2}{|w-1|^2 {\rm e}^v + |w+1|^2},
\end{equation}
and we have, as a notational convenience, introduced a rescaled coordinate $w=z/\eps$ (so $\hat h(w)=\tilde h(z)$), to remind us that, for $\hat h$, the vortex positions are $w=\pm 1$.  

In terms of the coordinate $\tilde w := 1/\bar w$,  obtained from stereographic projection from the North pole of $S^2$, $\hat h$ satisfies
a PDE almost identical to (\ref{PDES2ish}), but replacing $\eps$ by $1/\eps$ and $w$ by ${\tilde w}$. So in order to
solve the vortex equations on $S^2$, it suffices to find  a solution $\hat h_{\rm  upper}$ to (\ref{PDES2ish}) on the closed unit disk $|w|\le 1$,
as well as a solution $\hat h_{\rm lower}$ of (\ref{PDES2ish}) with $\eps \mapsto 1/\eps$ on the disk $|w|\le 1$, while imposing the matching condition
$$
\hat h_{\rm upper}( w) = \hat h_{\rm lower}( w)
$$
for all $|w|=1$. We have found numerical solutions for various values of $R$ and $\eps$, using a discretisation on a regular $50 \times 50$ grid for polar coordinates $w=: r {\rm e}^{{\rm i} \theta}$ on the unit disk.
The results for $R=1$ are shown in Figure~\ref{fig:hslices_sphere}, where plots of $\hat h$ over the real $w$-axis against the angle of declination $\vartheta = \arcsin  r$  are superposed for several values of $\eps$.

\begin{figure}[htb]
\begin{center}
\includegraphics[scale=0.5]{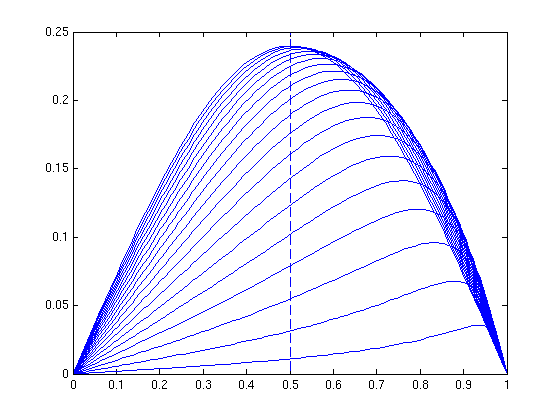}
\end{center}
\caption{Numerical solutions of the regularized Taubes equation on the sphere of radius $R=1$:
$\hat{h}$ along the real axis plotted against angle of declination $\vartheta$, for $\eps=1$ (top curve) to $\eps=0.05$ (bottom curve) in steps of $-0.05$. The metric on the moduli space of $(1,1)$ vortices
can be deduced from the gradient of these curves at $\vartheta=\pi/2$. Note that this gradient
vanishes as $\eps\ra 0$.}
\vspace{-255pt} \hspace{120pt} $\hat h$ \\[135pt]
\vspace{0pt} \hspace{310pt} $\vartheta/\pi$
\vspace{100pt}
\label{fig:hslices_sphere}
\end{figure}

The localization formula (\ref{metriceasy}), and a repeat of the argument in Section \ref{sec:pairsC}, imply that the  $L^2$ metric on the submanifold of centred $(\pm)$-pairs on $S^2_R$ is
\begin{equation}\label{g0pairsS2}
g_{L^2}^{(0)} = 2\pi \left(  \frac{8R^2}{(1+\eps^2)^2} + b'(\eps) + \frac{b(\eps)}{\eps} \right)({\rm d}\eps^2+\eps^2 {\rm d}\psi^2)
\end{equation}
where
\begin{equation}\label{thequantity}
\eps b(\eps)= \left. \frac{\partial \hat h}{\partial w_1}\right|_{w=1}-1,
\end{equation}
$w=:w_1 + {\rm i}w_2$ and $z_\tp=-z_\tm=\eps {\rm e}^{{\rm i}\psi}$. It is clear on symmetry grounds that (\ref{thequantity}) takes the value $-1$ for $\eps=1$, since this corresponds to the case where the $(\pm)$-vortices are antipodal, so that $\hat h$ must have a critical point at $w_1=1$.
The numerics strongly suggest that also
\begin{equation} \label{suggestion}
\lim_{\eps \rightarrow 0} (\eps b(\eps)) = -1,
\end{equation}
since $\hat h(w_1+{\rm i}0)$ seems to be flattening completely in this limit --- this is illustrated in Figure~\ref{fig:hslices_sphere} for $R=1$.  

In Section \ref{sec:vol11}, we will use strictly analytical methods to prove that (\ref{suggestion}) does in fact hold; see Theorem~\ref{thmvol}.
The proof exploits  properties of $g_{L^2}$ such as its large isometry group and K\"ahler property. In the next section we analyze the class of K\"ahler metrics on ${\sf M}_{(1,1)}^{\PP^1}(S^2_R)$ having the symmetries of $g_{L^2}$, obtaining a structure result (Proposition~\ref{invKaehler}) which is of independent interest (since, for example, it can be applied to the two-vortex moduli space of the usual {\em linear} Abelian Higgs model on
domain $S^2_R$).

It is of great interest to decide whether  ${\sf M}_{(k_\tp,k_\tm)}^{\PP^1}(\Sigma)$ is geodesically complete for $\Sigma$ compact
(see \cite{BokRomDB} for some motivation). The special case studied here ($k_\tp=k_\tm=1$, $\Sigma=S^2_R$, $\tau=0$) is the simplest in which the
question is nontrivial, since this is the simplest moduli space that is noncompact. In this situation, the question amounts to understanding the small $\eps$ behaviour of the {\em derivative} of $\eps b(\eps)$. In Section \ref{sec:inco} we present a proof that ${\sf M}_{(1,1)}^{\PP^1}(S^2_R)$
is {\em incomplete}, Theorem~\ref{incompleteness}. This is another hint that gauged sigma models are qualitatively similar to ungauged models, whose soliton moduli spaces are generically
incomplete \cite{SadSpe}.

\subsection{K\"ahler metrics on ${\sf M}_{(1,1)}^{\PP^1}(S^2_R)$}\label{sec:structure}

We know that the metric $g_{L^2}$ on
$${\sf M}_{(1,1)}^{\PP^1}(S^2_R)\cong S^2\times S^2 \setminus D_{S^2}$$
(where $D_{S^2}$ denotes the diagonal copy of $S^2$ in the product) is K\"ahler with respect to the obvious (product) complex structure $J$, and also {\em invariant}, in the sense that ${\rm SO}(3)$ rotations, acting diagonally on the product, and the holomorphic involution $I$ 
swapping the two factors are isometries. These properties alone almost determine it. 

To see this, we note that the ${\rm SO}(3)$-action preserves the (spherical or chordal) distance between the points $z=z_\tpm$ on $S^2$, and that on every orbit there is precisely one point of the form
$$
(z_\tp,z_\tm)=(\eps, -\eps)=: q(\eps) \qquad \text{ with }0<\eps \le 1.
$$
Further, the isotropy group ${\rm SO}(3)_{q(\eps)}$ is trivial for $\eps \in (0,1)$ and ${\rm SO}(2)$ for $\eps=1$. Hence $S^2\times S^2\setminus D_{S^2}$ is diffeomorphic to $(0,1)\times {\rm SO}(3)$ with
a copy of $S^2$ glued to the right end. It follows that, away from the exceptional orbit at $\eps=1$, we can write any ${\rm SO}(3)$-invariant metric $g$ on $S^2\times S^2\setminus D_{S^2}$ as
$$
g=\sum_{i,j=0}^3 A_{ij}(\eps) \sigma_i \sigma_j,
$$
for appropriate smooth functions $A_{ij}$, where $\sigma_0={\rm d}\eps$ and $\{\sigma_1,\sigma_2,\sigma_3\}$ is some left invariant basis of one-forms on ${\rm SO}(3)$. A convenient choice is the basis dual to the basis
\begin{equation} \label{vectorsEj}
E_1=\left( \begin{array}{ccc}
0 & 0& 0\\
0& 0& -1\\
0& 1& 0
\end{array}\right),\;\;
E_2=\left( \begin{array}{ccc}
0 & 0& 1\\
0& 0& 0\\
-1 & 0& 0
\end{array}\right),\;\;
E_3=\left( \begin{array}{ccc}
0 & -1& 0\\
1& 0& 0\\
0 & 0& 0
\end{array}\right)
\end{equation}
of $\mathfrak{so}(3)$. 

With these conventions, we have the following:

\begin{proposition} \label{invKaehler}
Any invariant K\"ahler metric on $S^2\times S^2 \setminus D_{S^2}$ has the form
\begin{equation} \label{gA}
g=A(\eps) \left( \frac{1-\eps^2}{1+\eps^2}\sigma_1^2 + \frac{1+\eps^2}{1-\eps^2} \sigma_2^2 \right)  - 
\frac{A'(\eps)}{\eps} \left({\rm d}\eps^2 + \eps^2 \sigma_3^2 \right)
\end{equation}
for some smooth, strictly decreasing function $A: (0,1]\rightarrow \RR$ with $A(1)=0$.
Such a metric has total volume
\begin{equation}\label{totvolgA}
{\rm Vol}(S^2\times S^2 \setminus D_{S^2},g)= 4 \pi^2 \left( \lim_{\eps\rightarrow 0} A(\eps)\right)^2.
\end{equation}
\end{proposition}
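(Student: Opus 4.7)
The plan has two stages: first establish the structural form (\ref{gA}) from the invariance and K\"ahler conditions, then integrate the resulting volume form.

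\textbf{Stage 1 (structure).} Away from the exceptional orbit at $\eps=1$, the diagonal $SO(3)$-action is free, so this open submanifold is a principal $SO(3)$-bundle over $(0,1)$ trivialised by the slice $q(\eps)$. In the coframe $(\d\eps, \sigma_1, \sigma_2, \sigma_3)$, an arbitrary invariant metric takes the form $g = \sum_{i,j=0}^3 A_{ij}(\eps)\sigma_i\sigma_j$ with $A_{ij}$ symmetric, and the first task would be to compute the product complex structure $J$ in this coframe at $q(\eps)$. I would evaluate each fundamental vector field $E_j^*$ at $(\eps,-\eps)$ in stereographic coordinates and apply $J$ (multiplication by $\mathrm{i}$ on each $\PP^1$-factor); the expected outcome is that $J$ preserves the two $2$-planes $\mathrm{span}(\cd_\eps, E_3^*)$ and $\mathrm{span}(E_1^*, E_2^*)$, with explicit rotation factors in which the ratios $(1\pm\eps^2)$ naturally arise. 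The Hermitian condition $g(JX,JY)=g(X,Y)$ then forces every cross-term between these two $J$-invariant planes to vanish, and ties $A_{00}$ to $A_{33}$ and $A_{11}$ to $A_{22}$ through the pairings $J\cd_\eps\propto E_3^*$ and $JE_1^*\propto E_2^*$. Invariance under the holomorphic swap involution $I$ removes the remaining off-diagonal $A_{12}$. Finally, the K\"ahler condition $\d\omega=0$ for $\omega := g(J\cdot,\cdot)$ should yield a single first-order ODE that integrates to collapse everything into the one function $A(\eps)$ with the prefactors displayed in (\ref{gA}). Smoothness across the exceptional orbit at $\eps=1$, where the $SO(2)$-isotropy of $q(1)$ shrinks the $(E_1^*, E_2^*)$-fibre so that the orbit becomes a $2$-sphere, will impose $A(1)=0$; strict negativity of $A'$ then follows from positivity of the coefficient $-A'\eps$ of $\sigma_3^2$.

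I expect the hardest step to be the bookkeeping in this first stage: translating the ambient product complex structure on $S^2\times S^2$ into the abstract left-invariant coframe dual to (\ref{vectorsEj}), and then propagating the resulting $(1\pm\eps^2)$ ratios through the Hermitian and K\"ahler constraints to land on exactly the form (\ref{gA}).

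\textbf{Stage 2 (volume).} Once (\ref{gA}) is in hand, the coframe $(\d\eps, \sigma_1,\sigma_2,\sigma_3)$ diagonalises $g$ with entries $-A'/\eps$, $A(1-\eps^2)/(1+\eps^2)$, $A(1+\eps^2)/(1-\eps^2)$, $-A'\eps$, whose product is $A^2(A')^2$. The Riemannian volume form is therefore
\[
\d V_g \;=\; -A(\eps)A'(\eps)\,\d\eps\wedge\sigma_1\wedge\sigma_2\wedge\sigma_3,
\]
the sign reflecting $A>0$, $A'<0$. Using $A(1)=0$ together with the normalisation $\int_{SO(3)}\sigma_1\wedge\sigma_2\wedge\sigma_3 = 8\pi^2$ (the bi-invariant volume of $SO(3)$ for the basis (\ref{vectorsEj})), direct integration gives
\[
\Vol\bigl(S^2\times S^2\setminus D_{S^2},g\bigr) \;=\; 8\pi^2\Bigl(-\int_0^1 A(\eps)A'(\eps)\,\d\eps\Bigr) \;=\; 4\pi^2\bigl(\lim_{\eps\to 0}A(\eps)\bigr)^2,
\]
which is (\ref{totvolgA}).
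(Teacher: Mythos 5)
Your proposal is correct and Stage 1 follows essentially the same route as the paper: evaluate the fundamental vector fields at the slice point $q(\eps)$, read off $J$ in the coframe $(\d\eps,\sigma_1,\sigma_2,\sigma_3)$, impose the Hermitian condition and $I$-invariance to diagonalise the metric into two functions, and reduce the K\"ahler condition to a first-order ODE that collapses everything to the single function $A$, with regularity at the exceptional orbit forcing $A(1)=0$. The one genuine divergence is in Stage 2: you fix the normalisation by asserting directly that $\int_{{\rm SO}(3)}\sigma_1\wedge\sigma_2\wedge\sigma_3=8\pi^2$ for the coframe dual to (\ref{vectorsEj}), which is correct (it is the Haar volume of ${\rm SO}(3)$ in the metric making the $E_j$ orthonormal, i.e.\ half the volume $16\pi^2$ of the radius-$2$ three-sphere $SU(2)$), but it does require justifying that normalisation. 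The paper instead leaves the constant $c=\int_{{\rm SO}(3)}\sigma_1\wedge\sigma_2\wedge\sigma_3$ undetermined and calibrates it by specialising the volume formula to the product of round unit metrics, where $A(\eps)=-2+4/(1+\eps^2)$ and the total volume must equal $(4\pi)^2$; this sidesteps any explicit computation on ${\rm SO}(3)$ and doubles as a consistency check on the form (\ref{gA}). Either way the volume integral $-\int_0^1 AA'\,\d\eps=\tfrac12(\lim_{\eps\ra 0}A(\eps))^2$ gives (\ref{totvolgA}).
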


\begin{proof}
For the first statement, we follow closely the strategy of proof of Proposition~5 in reference~\cite{AlqSpe}.
The stereographic coordinate $z$ on $S^2$ induces complex coordinates $z_\pm$ on each copy of $S^2$ in the product, and $I$ acts as
\begin{equation}\label{involution}
I: (z_\tp,z_\tm) \mapsto (z_\tm, z_\tp).
\end{equation}
A short calculation shows that, at $q(\eps)$, the infinitesimal generators representing the vectors (\ref{vectorsEj}) and $E_0=\frac{{\rm d}}{{\rm d} \eps}$ can be
matched with a basis of ${\rm T}_{q(\eps)}(S^2\times S^2\setminus D_{S^2})$ as follows:
\begin{eqnarray*}
E_0 &=& \frac{\partial}{\partial z_\tp} -  \frac{\partial}{\partial z_\tm}, \\
E_1 &=& -\frac{{\rm i}}{2}(1-\eps^2) \left(  \frac{\partial}{\partial z_\tp} +  \frac{\partial}{\partial z_\tm}\right), \\
E_2 &=& \frac{1}{2}(1+\eps^2) \left(  \frac{\partial}{\partial z_\tp} +  \frac{\partial}{\partial z_\tm}\right), \\
E_3 &=& {\rm i}\eps \left(  \frac{\partial}{\partial z_\tp} -  \frac{\partial}{\partial z_\tm}\right). \\
\end{eqnarray*}
Hence we find that, at $q(\eps)$, 
 \begin{equation}  \label{JonEj}
 J E_0=\frac{1}{\eps} E_3, \quad JE_1=\frac{1-\eps^2}{1+\eps^2}E_2, \quad JE_2 = -\frac{1+\eps^2}{1-\eps^2} E_1,\quad JE_3=-\eps E_0.
 \end{equation}
 Now $g$ is assumed Hermitian, whence we conclude from (\ref{JonEj}) that
 $$
 A_{03}=A_{12}=0,\qquad A_{33}= \eps^2 A_{00}, \qquad A_{11}=\left( \frac{1-\eps^2}{1+\eps^2}\right)^2 A_{22}.
 $$
 Further, the involution (\ref{involution}) acts by pullback as follows:
 $$
 I^*{\rm d}\eps={\rm d}\eps,\qquad I^*\sigma_1=-\sigma_1,\qquad I^*\sigma_2=-\sigma_2,\qquad I^*\sigma_3=\sigma_3.
 $$
 This implies
 $$
 A_{01}=A_{02}=A_{13}=A_{23}=0.
 $$

So far, we have shown that every $J$-Hermitian ${\rm SO}(3)\times \{{\rm id},I\}$-invariant metric on \mbox{$S^2\times S^2\setminus D_{S^2}$} must have the form
\begin{equation} \label{shapeg}
g=A_0(\eps)({\rm d}\eps^2+\eps^2)+ A_2(\eps) \left(\left(\frac{1-\eps^2}{1+\eps^2}\right)\sigma_1^2 + \sigma_2^2\right),
\end{equation}
where $A_0=A_{00}$ and $A_2=A_{22}$. The corresponding $J$-$(1,1)$-form, $\omega(\cdot, \cdot)=g(J \cdot, \cdot)$, is
$$
\omega=\eps A_0(\eps) {\rm d}\eps \wedge \sigma_3+\frac{1-\eps^2}{1+\eps^2}A_2(\eps) \sigma_1\wedge \sigma_2.
$$
Now we have ${\rm d}\sigma_1=-\sigma_2\wedge \sigma_3$ and cyclic permutations, so $g$ is K\"ahler (i.e.\ $\omega$ is closed) if and only if
\begin{equation}\label{A0A2}
\eps A_0(\eps)=-\frac{{\rm d}}{{\rm d}\eps}\left( \frac{1-\eps^2}{1+\eps^2}A_2(\eps)\right).
\end{equation}
Defining, for convenience,
$$
A(\eps):=\frac{1-\eps^2}{1+\eps^2} A_2(\eps),
$$
and noting that regularity of $g$ as $\eps\rightarrow 1$ implies that $A_2(1)$ must be finite, and hence $A(1)=0$, we obtain the first assertion of the proposition.

The volume form associated with the metric (\ref{gA}) is
$$
{\rm vol}_{g}=-A'(\eps) A(\eps)\, {\rm d}\eps \wedge \sigma_{123}
$$
with $\sigma_{123}:=\sigma_1 \wedge \sigma_2\wedge \sigma_3$.
This leads to a total volume
\begin{equation}\label{Volwithc}
{\rm Vol}(S^2\times S^2 \setminus D_{S^2},g)=\frac{1}{2}c\left( \left(\lim_{\eps\rightarrow 0}A(\eps)\right)^2-A(1)^2\right) = \frac{c}{2} \left(\lim_{\eps\rightarrow 0}A(\eps)\right)^2
\end{equation}
with $c:= \int_{{\rm SO}(3)}\sigma_{123}$. To calculate $c$, we specialize the formula (\ref{Volwithc}) to the particular case where each factor $S^2=S^2_1$ is given its usual round metric of unit radius,
for which
$$
A(\eps)=A_{(S^2_1)^2}(\eps) := -2+\frac{4}{1+\eps^2};
$$
then (\ref{Volwithc}) should also yield $({\rm Vol}(S^2_1))^2=(4\pi)^2$. Thus $c=8\pi^2$, and we have proved the formula (\ref{totvolgA}).
\end{proof}

Let us  now consider the case of the  $L^2$ metric $g_{L^2}$ on ${\sf M}_{(1,1)}^{\PP^1}(S^2_R)$. Referring to the notation in equations (\ref{g0pairsS2}) and (\ref{shapeg}), one has that
$$
A_0(\eps)=g_{L^2}\left(\frac{\partial}{\partial \eps}, \frac{\partial}{\partial \eps}\right)=2\pi\left( \frac{8R^2}{(1+\eps^2)^2} + \frac{1}{\eps} \frac{{\rm d}}{{\rm d}\eps}(\eps b(\eps))\right),
$$
hence the ODE (\ref{A0A2}) takes the form
$$
\eps A_0(\eps) = 2\pi \frac{{\rm d}}{{\rm d}\eps} \left( \eps b(\eps) - \frac{4R^2}{1+\eps^2} \right)
$$
and integrates to
$$
A(\eps)=-2\pi \left( \eps b(\eps) - \frac{4R^2}{1+\eps^2} +C\right),
$$
where $C$ is some constant. Recall that, by regularity, $A(1)=0$, and that $b(1)=-1$, from the symmetry of $\hat h$ for antipodal pairs. Hence we find that
$$
C=1+2R^2,
$$
i.e.\ $g_{L^2}$ takes the form (\ref{gA}) with
$$
A(\eps)=2\pi \left( 2R^2 \frac{1-\eps^2}{1+\eps^2} - \eps b(\eps)  - 1\right).
$$
Proposition~\ref{invKaehler} now yields
\begin{equation}\label{voloutofb}
{{\rm Vol}(S^2\times S^2 \setminus D_{S^2},g_{L^2})= (2\pi)^2\left( 4\pi R^2- 2\pi \left(\lim_{\eps\rightarrow 0} (\eps b(\eps))+1 \right) \right)^2. }
\end{equation}

\subsection{The volume of ${\sf M}_{(1,1)}^{\PP^1}(S^2_R)$}\label{sec:vol11}

In this section, we prove

\begin{theorem} \label{thmvol}
For $\tau=0$, 
\begin{equation} \label{volumeformula}
{
{\rm Vol}\left({\sf M}_{(1,1)}^{\PP^1}(S^2_R)\right) = (2\pi\times4\pi R^2)^2.
}
\end{equation} 
\end{theorem}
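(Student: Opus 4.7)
By Proposition~\ref{invKaehler} together with the expression for $A(\eps)$ derived at the end of Section~\ref{sec:structure}, the volume~(\ref{voloutofb}) depends only on $\ell:=\lim_{\eps\to 0}\eps b(\eps)$. A direct computation gives
$$
\Vol\bigl({\sf M}^{\PP^1}_{(1,1)}(S^2_R)\bigr) = (2\pi)^2\bigl(4\pi R^2 - 2\pi(\ell + 1)\bigr)^2,
$$
and the target value $(2\pi\times 4\pi R^2)^2$ is achieved precisely when $\ell=-1$. So the theorem reduces to the claim $\lim_{\eps\to 0}\eps b(\eps)=-1$, already suggested numerically in~(\ref{suggestion}). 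Via~(\ref{thequantity}), this amounts to proving that $\partial\hat h/\partial w_1|_{w=1}\to 0$ as $\eps\to 0$, where $\hat h$ solves the regularised Taubes equation~(\ref{PDES2ish}).

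The plan is to establish this limit via \emph{a priori} PDE estimates on $\hat h$. Rewriting~(\ref{PDES2ish}) using the geometric Laplacian of $S^2_R$ gives $\nabla^2_{S^2_R}\hat h = 2F(w,\hat h)$, with $|F|\le 1$ and $F(w,0) = -2\,\mathrm{Re}(w)/(|w|^2+1)$. Heuristically, as $\eps\to 0$ the vortex-antivortex pair annihilates and the underlying field degenerates to a gauge-equivalent of the vacuum, so one expects $\hat h\to 0$ uniformly on $S^2$. First I would prove $C^0$-decay $\|\hat h\|_{L^\infty(S^2)}\to 0$ by a maximum principle argument exploiting the strict monotonicity of $v\mapsto F(w,v)$ and the small conformal factor $8R^2\eps^2/(1+\eps^2|w|^2)^2$ on the hemisphere $|w|\le 1$; the symmetry $\eps\leftrightarrow 1/\eps$ mentioned after~(\ref{PDES2ish}) then transfers the bound to the complementary hemisphere, while the odd symmetry $\hat h(-w)=-\hat h(w)$ pins $\hat h$ to zero on the imaginary axis and cuts the problem down to a single quarter-disk. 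Standard Schauder estimates then upgrade this to $C^{2,\alpha}$-decay on a neighbourhood of $w=1$, giving the desired convergence of the first derivative.

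The main obstacle is the uniform $C^0$-bound itself. Although $2F(w,\hat h)$ is pointwise bounded, the conformal coefficient in~(\ref{PDES2ish}) is $O(\eps^2)$ for bounded $|w|$ but of order $O(1/\eps^2)$ near $|w|\sim 1/\eps$, so the family of PDEs does not degenerate uniformly on the $w$-plane. A safer variant is to work on the fixed domain $S^2_R$ with the function $\tilde h$ introduced in~(\ref{tildeh}), which satisfies $\nabla^2_{S^2_R}\tilde h = -2\,\mathbf{n}\cdot\mathbf{u}$, and to treat the logarithmic singularities of $h=\tilde h+h_{\mathrm{sing}}$ at $z=\pm\eps$ as a perturbation of the trivial configuration that is pointwise small away from the cores; uniform Green's function estimates on the fixed sphere then furnish the required bound independent of $\eps$. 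In either approach, Yang's uniqueness theorem ensures that any subsequential limit of $\hat h_\eps$ must coincide with the trivial solution, so convergence is genuine and the theorem follows.
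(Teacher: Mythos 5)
Your reduction of the theorem is exactly the paper's: Proposition~\ref{invKaehler} and the computation at the end of Section~\ref{sec:structure} give ${\rm Vol}=(2\pi)^2\bigl(4\pi R^2-2\pi(\ell+1)\bigr)^2$ with $\ell=\lim_{\eps\to0}\eps b(\eps)$, so everything hinges on proving $\ell=-1$, i.e.\ $\partial_1\hat h|_{w=1}\to 0$ via (\ref{thequantity}). The problem is that this limit is the entire analytic content of the theorem, and your proposal only sketches a plan for it, with the key step left unproven. Your primary route is to show $\|\hat h\|_{C^0(S^2)}\to 0$ by a maximum principle and then apply Schauder estimates. The second step would indeed work, but the first does not follow from the argument you describe: at an interior maximum $w_*$ of $\hat h$ on the hemisphere, $\nabla^2_w\hat h(w_*)\le 0$ forces $F(w_*,\hat h(w_*))\le 0$, which only yields $\hat h(w_*)\le\log\bigl(|w_*+1|^2/|w_*-1|^2\bigr)$ --- a bound that degenerates precisely when $w_*$ is near the vortex at $w=1$, which is exactly where you need control. (This is how the paper's Proposition~\ref{signestimates}(ii) gets the \emph{sign} $\hat h\ge 0$, but it cannot give smallness.) Nor does the smallness of the coefficient rescue you: in the flat form (\ref{PDES2ish}) the coefficient is uniformly $O(\eps^2)$ but the domain is all of $\C$, where a small Laplacian does not force a small function, and in the round form (\ref{PDES2}) the coefficient $2R^2\eps^2 f_\eps^2$ blows up like $\eps^{-2}$ near $|w|\sim 1/\eps$. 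Your "safer variant" via Green's function estimates for $\tilde h$ delivers only a uniform bound $\|\hat h\|_{C^0}\le C$ (the paper's Corollary~\ref{cor:C0est}, obtained there from the energy identity), not decay.

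The paper avoids $C^0$ decay altogether. It proves the pointwise sign estimates of Proposition~\ref{signestimates} (the nontrivial half, $F(w,\hat h)\le 0$ on the hemisphere, uses a topological/energy argument about where ${\bf u}$ can wrap, not just the maximum principle), combines them with the explicit lower bound of Lemma~\ref{lem:Ffrombelow} to get the \emph{integral} estimate $\ip{\hat h,-\Delta_{S^2}\hat h}_{L^2}\le C\eps$, hence $\|\hat h\|_{H^1(S^2)}\le C\eps^{1/2}$, and then runs a local $H^2$ elliptic estimate on the differentiated equation near $w=1$ followed by Sobolev embedding to conclude $|\partial_1\hat h(1)|\le C\eps^{1/2}$. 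Your closing remark about subsequential limits is actually the germ of a workable alternative: from the uniform $C^0$ bound and the locally uniformly vanishing right-hand side of (\ref{PDES2ish}), interior $W^{2,p}$ estimates give $C^1_{\rm loc}$ precompactness of $\{\hat h_\eps\}$ on $\C$, any subsequential limit is a bounded harmonic function, hence constant by Liouville, hence zero by the odd symmetry, and $C^1$ convergence then gives $\partial_1\hat h_\eps(1)\to 0$. But as written you neither carry this out nor identify that it is the compactness-plus-Liouville structure (rather than "Yang's uniqueness") doing the work, so the proposal as it stands has a genuine gap at its central step.
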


This result is very satisfactory: it  coincides with the natural volume of the configuration space of a pair of point particles of mass
$2\pi$ moving on $S^2_R$. In light of Equation (\ref{voloutofb}), proving it reduces to establishing
the limit (\ref{suggestion}), which may be achieved via a direct analysis of the family of
nonlinear elliptic PDEs (\ref{PDES2ish}).

Throughout this section and the next, we assume that $h$, $\hat h$ satisfy Taubes' equation (\ref{TaubespairsS2}), (\ref{PDES2ish}) for vortex-antivortex pairs on the two-sphere. It follows from the main theorem of \cite{Sib2Yan} that $\hat h$ is, for each $\eps>0$, unique, and smooth as a mapping $S^2\ra\R$. It is helpful to recast (\ref{PDES2ish}) as a global PDE on $S^2$,
\beq\label{PDES2}
-\Delta_{S^2}\hat h+2R^2\eps^2 f_\eps(w)^2F(w,\hat{h})=0,
\eeq
where $\Delta_{S^2}$ denotes the usual Laplacian on the unit two-sphere (with the analysts' sign convention), $f_\eps(w)=(1+|w|^2)/(1+\eps^2|w|^2)$,
$F$ is defined in (\ref{functionF}), and we note that both $f_\eps$ and $F$ extend smoothly over $S^2$. 
We denote by $\Delta_{\RR^2}$ the usual Laplacian on (subsets of) the Euclidean plane. For a given Riemannian manifold $\Sigma$, we denote by $H^k(\Sigma)$ the completion with respect to the norm
$$
\|u\|_{H^k(\Sigma)}^2=\sum_{l=0}^k\|\nabla^lu\|_{L^2(\Sigma)}^2
$$
of the space of smooth maps $\Sigma\ra\R$. $H^k(\Sigma)$ is a Banach space, see \cite{Aub} for details.
Similarly, we denote by $C^0(\Sigma)$ the Banach space of bounded continuous real functions on $\Sigma$ with the norm $\|u\|_{C^0(\Sigma)}=\sup\{|u(p)|:p\in\Sigma\}$.  We will make use of these spaces for
$\Sigma=S^2$, the unit sphere, and $\Sigma=\DD$, an open Euclidean disk. 

Our argument makes heavy use of the following standard elliptic estimates for the Laplacians on $S^2$ and $\DD$.

\begin{proposition}\label{SEE} Let $\DD$, $\DD'$ 
be open disks in $\R^2$ such that $\ol{\DD'}\subset\DD$. Then there exists $C>0$, depending only on
$\DD,\DD'$ such that for all smooth $v:\DD\ra\R$, and  all smooth $u:S^2\ra\R$ with $\ip{1,u}_{L^2(S^2)}=0$,
\renewcommand{\labelenumi}{(\roman{enumi})}
\begin{itemize}
\ii[{\rm (i)}] 
$\ds{
\|v\|_{H^2(\DD')}\leq C(\|\Delta_{\R^2}v\|_{L^2(\DD)}+\|v\|_{L^2(\DD')}),
}$
\ii[{\rm (ii)}] 
$\ds{
\|u\|_{H^2(S^2)}\leq C\|\Delta_{S^2}u\|_{L^2(S^2)},
}$
\ii[{\rm (iii)}] 
$\ds{\|u\|_{H^1(S^2)}^2\leq C\ip{u,-\Delta_{S^2}u}_{L^2(S^2)}.}$
\end{itemize}
\end{proposition}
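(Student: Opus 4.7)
The plan is to derive all three estimates from standard tools---integration by parts, a Bochner identity on $S^2$, and interior elliptic regularity for the flat Laplacian. I would tackle them in the order (iii), (ii), (i), of increasing technical content, since (iii) feeds directly into the proof of (ii).

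For (iii), integration by parts immediately gives $\ip{u,-\Delta_{S^2}u}_{L^2(S^2)}=\|\nabla u\|_{L^2(S^2)}^2$. The operator $-\Delta_{S^2}$ has discrete spectrum $\{l(l+1):l\in\N_0\}$ with $l=0$ corresponding to the constants; since $u$ is $L^2$-orthogonal to constants, the variational characterisation of the first nonzero eigenvalue $\lambda_1=2$ yields the Poincar\'e inequality $\|u\|_{L^2(S^2)}^2\leq\tfrac12\|\nabla u\|_{L^2(S^2)}^2$. Adding these gives (iii) with $C=3/2$.

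For (ii), the key tool is the Bochner identity on $S^2$: since the Ricci tensor equals the metric on the unit sphere, it reads $\|\nabla^2 u\|_{L^2(S^2)}^2=\|\Delta_{S^2}u\|_{L^2(S^2)}^2-\|\nabla u\|_{L^2(S^2)}^2$. Substituting into the definition of $\|u\|_{H^2(S^2)}^2$ produces the exact identity $\|u\|_{H^2(S^2)}^2=\|u\|_{L^2(S^2)}^2+\|\Delta_{S^2}u\|_{L^2(S^2)}^2$. For mean-zero $u$, the spectral gap gives $\|u\|_{L^2(S^2)}^2\leq\tfrac14\|\Delta_{S^2}u\|_{L^2(S^2)}^2$ (since each nonzero Fourier mode satisfies $l^2(l+1)^2\geq 4$), so (ii) follows with $C=\sqrt{5}/2$.

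For (i), which is the main work, the plan is a standard cutoff-and-Fourier argument. Pick a smooth bump $\chi\in C_c^\infty(\DD)$ with $\chi\equiv 1$ on an intermediate disk $\DD''$ satisfying $\ol{\DD'}\subset\DD''$ and $\ol{\DD''}\subset\DD$. Extending $\chi v$ by zero to $\R^2$, Plancherel together with the symbol bound $(1+|\xi|^2)^2\leq 2(1+|\xi|^4)$ yields
\[
\|\chi v\|_{H^2(\R^2)}\leq C\bigl(\|\Delta_{\R^2}(\chi v)\|_{L^2(\R^2)}+\|\chi v\|_{L^2(\R^2)}\bigr).
\]
Expanding $\Delta_{\R^2}(\chi v)=\chi\,\Delta_{\R^2}v+2\nabla\chi\cdot\nabla v+v\,\Delta_{\R^2}\chi$ produces the desired term together with a cross contribution controlled by $\|\nabla v\|_{L^2(\DD'')}$. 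The main obstacle will be absorbing this first-order term, since it is a priori not controlled by the allowed right-hand side. I would handle it via the Gagliardo--Nirenberg interpolation $\|\nabla v\|_{L^2(\DD'')}\leq\delta\|D^2v\|_{L^2(\DD'')}+C_\delta\|v\|_{L^2(\DD'')}$, iterated over a nested family of disks between $\DD'$ and $\DD$, with $\delta$ chosen small enough to absorb the $\|D^2v\|_{L^2}$ term back into the left-hand side. Restricting to $\DD'$ then yields (i).
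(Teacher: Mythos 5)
Your proof of (iii) is the paper's own argument: Rayleigh's theorem for the lowest nonzero eigenvalue of $-\Delta_{S^2}$ on the orthogonal complement of the constants, here with the explicit value $\lambda_1=2$. For (i) and (ii) the paper simply cites standard elliptic estimates from Donaldson--Kronheimer, so your self-contained derivations are a genuinely different and more informative route. In particular, the Bochner identity $\|\nabla^2u\|_{L^2(S^2)}^2=\|\Delta_{S^2}u\|_{L^2(S^2)}^2-\|\nabla u\|_{L^2(S^2)}^2$ on the unit sphere (where ${\rm Ric}=g$), combined with the spectral gap, gives (ii) cleanly with an explicit constant; and the cutoff-plus-Plancherel scheme with the interpolation/absorption iteration over nested disks is the standard proof of the interior estimate. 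Both are correct as plans, and they buy explicit constants and independence from the cited reference.

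The one point you must fix concerns the zeroth-order term in (i). Your construction produces $\|v\|_{L^2}$ over the support of the cutoff $\chi$ --- hence over a region strictly larger than $\DD'$, ultimately over $\DD$ --- and no amount of absorption can shrink this to $\|v\|_{L^2(\DD')}$: the inequality as literally printed, with $\|v\|_{L^2(\DD')}$ on the right-hand side, is in fact false. Take $v={\rm Re}(z^n)$ with $\DD=B_1(0)$ and $\DD'=B_{1/2}(0)$: then $\Delta_{\R^2}v=0$ while
$$
\frac{\|\nabla v\|^2_{L^2(\DD')}}{\|v\|^2_{L^2(\DD')}}=8n(n+1)\longrightarrow\infty,
$$
so even $\|v\|_{H^1(\DD')}$ is not controlled by $\|v\|_{L^2(\DD')}$ for harmonic $v$. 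Your closing sentence ``restricting to $\DD'$ then yields (i)'' therefore overclaims; what your argument actually proves is
$\|v\|_{H^2(\DD')}\leq C(\|\Delta_{\R^2}v\|_{L^2(\DD)}+\|v\|_{L^2(\DD)})$. This is evidently the intended statement --- it is the form provided by the cited reference, and in every subsequent application (Lemmas \ref{lem:H1der} and \ref{pardol}) the authors immediately enlarge $\|\cdot\|_{L^2(\DD')}$ to $\|\cdot\|_{L^2(\DD)}$ anyway --- so you should state and prove that version rather than the one printed.
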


\begin{proof} Parts (i) and (ii) follow from more general results presented in \cite[p.\ 423]{DonKro}. 
To prove part (iii), we note that all $u$ with $\ip{1,u}_{L^2}=0$ are $L^2$ orthogonal to the kernel of $-\Delta_{S^2}$, and so
$$
\ip{u,-\Delta_{S^2} u}_{L^2}\geq \lambda_1\|u\|_{L^2}^2
$$
by Rayleigh's theorem \cite[p.\ 16]{Chavel}, where $\lambda_1>0$ is the lowest nonzero eigenvalue of $-\Delta_{S^2}$. Hence
$$
\|u\|_{H^1}^2=\ip{u,-\Delta_{S^2} u}_{L^2}+\|u\|_{L^2}^2\leq \left(1+\frac{1}{\lambda_1}\right)\ip{u,-\Delta_{S^2} u}_{L^2}.
$$
\end{proof}

We will also use two standard Sobolev imbeddings \cite[pp.~44, 51]{Aub}:

\begin{proposition}\label{SOB} Let $\DD\subset\R^2$ be an open disk. There exist constants $C(S^2)>0$ and $C(\DD)>0$ such that, for
all $u\in H^2(S^2)$ and all $v\in H^2(\DD)$,
$$
\|u\|_{C^0(S^2)}\leq C(S^2)\|u\|_{H^2(S^2)}\quad\mbox{and}\quad
\|v\|_{C^0(\ol{\DD})}\leq C(\DD)\|v\|_{H^2(\DD)}.
$$
\end{proposition}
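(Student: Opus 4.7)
The plan is to establish the two inequalities separately, via spherical-harmonic expansion on $S^2$ and via Fourier analysis (after extension) on $\DD$. Both are the critical (limiting) Sobolev embeddings in dimension two, so the arguments hinge on a logarithmic-type integrability condition that barely holds at the level of $H^2$.

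For the sphere, I would expand any smooth $u:S^2\to\R$ in spherical harmonics, $u=\sum_{l\ge 0}\sum_{m=-l}^{l}c_{l,m}Y_{l,m}$, where the $Y_{l,m}$ are $L^2$-orthonormal eigenfunctions of $-\Delta_{S^2}$ with eigenvalue $l(l+1)$. The addition theorem $\sum_{m=-l}^{l}|Y_{l,m}(p)|^2=(2l+1)/(4\pi)$ gives the uniform bound $\|Y_{l,m}\|_{C^0(S^2)}\leq C_0\sqrt{2l+1}$. Since $\|u\|_{H^2(S^2)}^2$ is equivalent as a norm to $\sum_{l,m}(1+l(l+1))^2|c_{l,m}|^2$ (by iterating the eigenvalue identity and using elliptic estimates of the type in Proposition~\ref{SEE}), Cauchy--Schwarz yields
$$
|u(p)|\leq\sum_{l,m}|c_{l,m}|\,\|Y_{l,m}\|_{C^0}\leq C_0\Big(\sum_{l,m}(1+l(l+1))^2|c_{l,m}|^2\Big)^{1/2}\Big(\sum_{l\ge 0}\tfrac{(2l+1)^2}{(1+l(l+1))^2}\Big)^{1/2}.
$$
The second sum converges because its general term is $O(1/l^2)$; density of smooth maps in $H^2(S^2)$ then extends the bound to all of $H^2(S^2)$ and produces the constant $C(S^2)$.

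For the disk, I would invoke a bounded linear extension operator $E:H^2(\DD)\to H^2(\R^2)$, whose image is supported in a fixed larger ball; such $E$ exists for smooth bounded domains and, on a disk, can be built by higher-order reflection across $\partial\DD$ composed with a smooth cutoff. It then suffices to bound $\|w\|_{C^0(\R^2)}$ by $\|w\|_{H^2(\R^2)}$ for compactly supported smooth $w$. The Fourier inversion formula gives $|w(x)|\le(2\pi)^{-1}\|\wh w\|_{L^1(\R^2)}$, and Cauchy--Schwarz bounds the right-hand side by
$$
(2\pi)^{-1}\|(1+|\xi|^2)^{-1}\|_{L^2(\R^2)}\,\|(1+|\xi|^2)\wh w\|_{L^2(\R^2)}.
$$
In dimension two the first factor is finite since $\int_{\R^2}(1+|\xi|^2)^{-2}\d\xi<\infty$ (the integrand behaves like $|\xi|^{-4}$ at infinity), while the second factor is equivalent to $\|w\|_{H^2(\R^2)}$ by Plancherel. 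Composing with $E$ produces $C(\DD)$.

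The main obstacle is the extension operator $E$: a soft argument for its existence relies on partitioning $\DD$ with a collar around $\partial\DD$ and locally straightening the boundary, then using reflections of the form $v(x)\mapsto\sum_{k=0}^{2}\alpha_{k}v(\Phi_{k}(x))$ chosen so that the reflected function matches $v$ up to second derivatives across $\partial\DD$. Everything else is elementary Hilbert-space manipulation. It is worth noting that the convergence of both series $\sum_{l}(2l+1)^2/(1+l(l+1))^2$ and $\int_{\R^2}(1+|\xi|^2)^{-2}\d\xi$ is precisely the borderline phenomenon that prevents the same argument from working for $H^1$ in two dimensions; it is what makes $H^2$ the correct Sobolev space at which continuous embedding into $C^0$ begins to hold here.
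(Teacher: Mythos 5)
Your proof is correct, but it is genuinely different from what the paper does: the paper offers no proof at all for Proposition~\ref{SOB}, simply citing the standard Sobolev embedding theorems in Aubin's book (the two page references given there), whereas you supply a self-contained argument. Your two mechanisms are both sound. On $S^2$, the addition theorem bound $\|Y_{l,m}\|_{C^0}\leq C_0\sqrt{2l+1}$ together with the equivalence of $\|u\|_{H^2(S^2)}^2$ with $\sum_{l,m}(1+l(l+1))^2|c_{l,m}|^2$ (which, as you say, follows from the Bochner identity and the elliptic estimates already recorded in Proposition~\ref{SEE}, after splitting off the mean) reduces everything to the convergence of $\sum_l (2l+1)^2/(1+l(l+1))^2$; density of smooth functions is automatic here since the paper \emph{defines} $H^2(S^2)$ as a completion. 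On the disk, the only nontrivial input is the bounded extension operator $E:H^2(\DD)\to H^2(\R^2)$, which for a disk is classical (higher-order reflection plus cutoff), and the rest is Plancherel and the integrability of $(1+|\xi|^2)^{-2}$ in dimension two; the constant $(2\pi)^{-1}$ in your inversion step depends on the Fourier normalization but is immaterial. What the paper's route buys is brevity and no need to construct $E$ or verify the norm equivalence on $S^2$; what your route buys is transparency about why $H^2$ is exactly the right space in two dimensions — the borderline convergence you point out at the end is the honest content of the statement, and it also makes clear that the same constants work uniformly, which is all the paper ever uses.
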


We start with a very crude estimate of the $H^2(S^2)$-norm of the regularized, but undilated, Taubes function $\tilde{h}$; see (\ref{tildeh}).
Here, and henceforth, the symbol $C$ is used for a variable positive constant, independent of the parameter $\eps$
(but possibly dependent on $R$); the actual value of $C$ may vary from line to line.

\begin{lemma} \label{lem:H2normestimate}
 $\| \tilde h\|_{H^2(S^2)} < C$. 
\end{lemma}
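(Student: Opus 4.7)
The plan is to recast Taubes' equation~(\ref{TaubespairsS2}) as a smooth semilinear elliptic PDE for $\tilde h$ on the unit sphere with a uniformly bounded right-hand side, and then upgrade the standard zero-mean $H^2$ estimate into a bound on $\tilde h$ itself by exploiting a symmetry of the centred configuration to fix its mean value.

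Subtracting the singular part: by Poincar\'e--Lelong, $\nabla_z^2 h_{\rm sing}=4\pi(\delta_{\eps}-\delta_{-\eps})$, and passing to the spherical Laplacian via $-\Delta_{S^2}=-\frac{(1+|z|^2)^2}{4}\nabla_z^2$, Equation~(\ref{TaubespairsS2}) becomes the smooth global equation
\[
-\Delta_{S^2}\tilde h = -2R^2\tanh\tfrac{h}{2}\quad\text{on }S^2.
\]
A short computation from (\ref{functionh}) gives $\tanh(h/2)=-\mathbf{n}\cdot\mathbf{u}\in[-1,1]$, so $\|\Delta_{S^2}\tilde h\|_{L^2(S^2)}\leq 4R^2\sqrt{\pi}$ uniformly in $\eps$. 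Letting $\bar h:=(4\pi)^{-1}\int_{S^2}\tilde h\,{\rm vol}_{S^2}$ and $u:=\tilde h-\bar h$, Proposition~\ref{SEE}(ii) then yields
\[
\|u\|_{H^2(S^2)}\leq C\|\Delta_{S^2}\tilde h\|_{L^2(S^2)}\leq C',
\]
uniformly in $\eps$.

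The main obstacle I anticipate is controlling $|\bar h|$, which an $L^2$ bound on $\Delta_{S^2}\tilde h$ alone does not pin down. My plan to resolve this is to use the $\Z_2$ symmetry of the centred configuration $z_{\tp}=\eps$, $z_{\tm}=-\eps$: the reflection $z\mapsto -z$ combined with $h\mapsto -h$ (which swaps the roles of vortex and antivortex) leaves (\ref{TaubespairsS2}) invariant, since the delta source and $\tanh(h/2)$ are both odd under this transformation. Uniqueness of solutions (cf.~\cite{Sib2Yan}) then forces $h(-z)=-h(z)$; since $h_{\rm sing}(-z)=-h_{\rm sing}(z)$ by direct inspection, $\tilde h$ is itself odd and in particular $\tilde h(0)=0$. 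The Sobolev imbedding of Proposition~\ref{SOB} now gives
\[
|\bar h|=|u(0)|\leq \|u\|_{C^0(S^2)}\leq C(S^2)\|u\|_{H^2(S^2)}\leq C'',
\]
whence $\|\tilde h\|_{H^2(S^2)}\leq \|u\|_{H^2(S^2)}+|\bar h|\sqrt{4\pi}\leq C$, as required.
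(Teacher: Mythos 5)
Your proof is correct and follows essentially the same route as the paper: Proposition~\ref{SEE}(ii) applied to $\tilde h$ together with an $\eps$-uniform $L^2$ bound on $\Delta_{S^2}\tilde h$ (the paper obtains the latter by identifying $\Delta_{S^2}\tilde h$ with $-2\!*\!F_a$ and invoking the quantised energy, whereas you use the pointwise bound $|\tanh(h/2)|=|\mathbf{n}\cdot\mathbf{u}|\le 1$ --- equivalent in view of the second vortex equation). The only inefficiency is your treatment of the mean: since $\tilde h$ is odd under the \emph{isometry} $z\mapsto -z$ of $S^2$, you already have $\langle 1,\tilde h\rangle_{L^2(S^2)}=0$ outright, so Proposition~\ref{SEE}(ii) applies directly to $\tilde h$ and the detour through $u=\tilde h-\bar h$, the value $\tilde h(0)=0$ and the Sobolev imbedding is unnecessary.
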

\begin{proof}
Since $\tilde h$ is odd under reflexion across the imaginary $z$-axis, $\ip{1,\tilde{h}}_{L^2(S^2)}=0$, so applying Proposition~\ref{SEE}(ii) we
get
\begin{equation}
\| \tilde h \|_{H^2(S^2)} \le C\, \|\Delta_{S^2} \tilde h \|_{L^2(S^2)}  .
\end{equation}
Note that, away from the vortex positions, the magnetic field can be written
$$
F_a={\rm d}a=-\frac{{\rm i}}{4} \nabla^2_zh\, {\rm d}z\wedge {\rm d}\bar z = - \frac{1}{2}(\Delta_{S^2}h) {\rm vol}_{S^2},
$$
whence
$$
*F_a=- \frac{1}{2}\Delta_{S^2} h.
$$
Now 
$$
\Delta_{S^2} h_{\rm sing}(z) = \frac{(1+|z|^2)^2}{4} \Delta_{\RR^2} h_{\rm sing}(z)=0 
$$
away from the vortex positions, since $h_{\rm sing}$ is harmonic there. So the equation
\begin{equation} \label{actuallyglobal}
*F_a = - \frac{1}{2}\Delta_{S^2} \tilde h
\end{equation}
holds away from the vortex positions; but since both sides in (\ref{actuallyglobal}) are  smooth everywhere, this equation holds globally.

Since $(a,{\bf u})$
is a $(1,1)$ vortex, its total energy is ${\sf E}(a,{\bf u})=8\pi$. Hence
$$
\|\!*\!F_a\|^2_{L^2(S^2)} \le 2 \,{\sf E}(a,{\bf u}) =8\pi;
$$
using (\ref{actuallyglobal}), the result follows immediately.
\end{proof}

\begin{corollary} \label{cor:C0est}
$\| \hat h\|_{C^0(S^2)} = \| \tilde h\|_{C^0(S^2)} < C.$
\end{corollary}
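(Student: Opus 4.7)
The plan is to chain together two facts already established in the excerpt. First, because the dilation $\mathcal{R}_\eps: S^2\to S^2$, $z\mapsto \eps z$, is a diffeomorphism of $S^2$, and $\hat h = \tilde h\circ \mathcal{R}_\eps$, the two functions $\hat h$ and $\tilde h$ take exactly the same set of values on $S^2$; consequently $\|\hat h\|_{C^0(S^2)} = \|\tilde h\|_{C^0(S^2)}$ with no estimation required. This takes care of the first equality in the corollary.

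For the bound, I would invoke the Sobolev embedding $H^2(S^2)\hookrightarrow C^0(S^2)$ stated in Proposition~\ref{SOB}: there exists $C(S^2)>0$ such that $\|u\|_{C^0(S^2)} \le C(S^2)\|u\|_{H^2(S^2)}$ for all $u\in H^2(S^2)$. Applying this with $u=\tilde h$ (which is smooth on all of $S^2$, including the vortex positions, by the construction in (\ref{tildeh}), and hence lies in $H^2(S^2)$), one obtains
\[
\|\tilde h\|_{C^0(S^2)} \le C(S^2)\,\|\tilde h\|_{H^2(S^2)}.
\]
Then Lemma~\ref{lem:H2normestimate} provides an $\eps$-independent constant $C$ with $\|\tilde h\|_{H^2(S^2)} < C$, yielding $\|\tilde h\|_{C^0(S^2)} < C(S^2)\cdot C$, which is the required uniform bound (after absorbing constants into the symbol $C$, as per the convention announced above Lemma~\ref{lem:H2normestimate}). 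Combining with the equality from the previous paragraph completes the proof.

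There is essentially no obstacle here: the corollary is a one-line consequence of the preceding lemma and a standard Sobolev embedding, together with the trivial observation that precomposition with a diffeomorphism preserves the supremum norm. The only minor point worth flagging explicitly in the write-up is that $\tilde h$, unlike $h$, is globally smooth on $S^2$ (the logarithmic singularities of $h$ at $z=\pm\eps$ have been absorbed into $h_{\mathrm{sing}}$), so applying the global $H^2$ and $C^0$ norms on $S^2$ is legitimate.
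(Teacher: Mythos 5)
Your proof is correct and matches the paper's argument exactly: the paper's own proof is the one-line observation that the corollary follows from Lemma~\ref{lem:H2normestimate} via the Sobolev embedding of Proposition~\ref{SOB}, and your additional remark that $\|\hat h\|_{C^0(S^2)}=\|\tilde h\|_{C^0(S^2)}$ because $\mathcal{R}_\eps$ is a diffeomorphism of $S^2$ is the (implicit) justification of the first equality. Nothing to add.
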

\begin{proof}
This is a consequence of Lemma~\ref{lem:H2normestimate}, by Proposition~\ref{SOB}.
\end{proof}

The next step involves two pointwise sign estimates. {Let $\mathcal{H}$ denote the right half plane $\mathcal{H}=\{w\in\C\: :\: {\rm Re}(w)>0\}$.
In a slight abuse of notation, we will also use
$\mathcal{H}$ to denote the open hemisphere centred on $(0,1,0)$, that is, the open set in $S^2$ covered by the coordinate patch $\mathcal{H}$, and $\ol{\mathcal{H}}$ to denote its closure.}

\begin{proposition} \label{signestimates}
For all $\eps > 0$ and all $w\in{\mathcal{H}}$, one has
\begin{enumerate}
\item[{\rm (i)}] $\quad F(w,\hat h(w)) \le 0$;
\item[{\rm (ii)}] $\quad \hat h (w) \ge 0$.
\end{enumerate}
\end{proposition}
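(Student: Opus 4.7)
My plan is to derive both sign estimates from straightforward extremum principles applied to $\hat h$ (and, for part (i), to a simple affine modification of it) on the closed hemisphere $\overline{\mathcal{H}}\subset S^2$. The underlying regularity is already in place: by Corollary~\ref{cor:C0est}, $\hat h$ is a smooth and uniformly bounded solution of the elliptic PDE~(\ref{PDES2ish}) on $S^2$. The preparatory step is to pin down the boundary values of $\hat h$ on $\partial\overline{\mathcal{H}}$ from the two reflexion symmetries
$\hat h(\bar w)=\hat h(w)$ and $\hat h(-w)=-\hat h(w)$
of the solution (the former because the two cores lie on the real $w$-axis, the latter because exchanging vortex with antivortex swaps zeros with poles of $u$ and hence flips the sign of $h$). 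The first forces $\hat h$ to vanish on the imaginary $w$-axis; the second, applied at $w=\infty$ (the South pole of $S^2$, fixed under $w\mapsto -w$), then forces $\hat h(\infty)=0$ as well, so that $\hat h\equiv 0$ on the whole great-circle boundary of $\overline{\mathcal{H}}$.

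For (ii) I will argue by contradiction. Suppose $\hat h$ attains a strictly negative minimum at some interior point $w_\star\in\mathcal{H}$ (necessarily distinct from the vortex $w=1$, where $\hat h$ is regular). Then $\nabla^2_w\hat h(w_\star)\ge 0$, and (\ref{PDES2ish}) forces $F(w_\star,\hat h(w_\star))\ge 0$. Inspecting (\ref{functionF}), the inequality $F(w,v)\ge 0$ is equivalent to $v\ge\log(|w+1|^2/|w-1|^2)$, and this lower bound is \emph{strictly positive} throughout $\mathcal{H}$, because $|w+1|^2-|w-1|^2=4\,\mathrm{Re}(w)>0$ there. Hence $\hat h(w_\star)>0$, contradicting $\hat h(w_\star)<0$.

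For (i) the natural device is the auxiliary function
$g(w):=\hat h(w)-\log\bigl(|w+1|^2/|w-1|^2\bigr),$
engineered so that the inequality $F(w,\hat h(w))\le 0$ is equivalent to $g(w)\le 0$. The subtracted logarithm is harmonic on $\mathcal{H}\setminus\{1\}$, so $\nabla^2_w g=\nabla^2_w\hat h$ on that set; it vanishes on the imaginary axis and at infinity (matching the boundary values of $\hat h$ worked out above), and sends $g\to-\infty$ at the vortex $w=1$. A hypothetical positive interior maximum $w_\star\in\mathcal{H}\setminus\{1\}$ of $g$ would require $\nabla^2_w g(w_\star)\le 0$; but $g(w_\star)>0$ is exactly $F(w_\star,\hat h(w_\star))>0$, which via (\ref{PDES2ish}) delivers $\nabla^2_w\hat h(w_\star)>0$, contradiction.

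The one mildly delicate point I anticipate is the behaviour at $w=\infty$: the Laplacian $\nabla^2_w$ and the logarithmic comparison term are both written in the $w$-chart, which does not cover the South pole. This is resolved by passing to the antipodal chart $\tilde w=1/\bar w$ (in which $\hat h$ satisfies an equation of the same form as~(\ref{PDES2ish}) with $\eps$ replaced by $1/\eps$, as discussed just before Figure~\ref{fig:hslices_sphere}); since $\hat h$ is a globally smooth function on $S^2$ and the comparison function in (i) extends smoothly and harmonically across $w=\infty$ with limiting value $0$, the extremum principle applies on the whole closed hemisphere without incident.
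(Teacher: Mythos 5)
Your proof is correct, but for part (i) it takes a genuinely different route from the paper. For (ii) you essentially reproduce the paper's argument (a negative interior minimum of $\hat h$ on $\ol{\mathcal{H}}$ forces $\nabla^2_w\hat h\ge 0$ there, hence $F\ge 0$, hence $\hat h>\log(|w+1|^2/|w-1|^2)>0$ at that point, a contradiction), except that you do not need part (i) as an input: the paper first invokes (i) to force $\nabla^2_w\hat h=0$ at the minimum, whereas your version works directly from $\nabla^2_w\hat h\ge 0$, which slightly decouples the two statements. For (i) the paper argues gauge-theoretically rather than by a maximum principle: it reduces the claim to the nonnegativity of $*F_a$ on $\ol{\mathcal{H}}$, supposes a positive-measure region $\mathcal{S}$ where $*F_a<0$, observes that ${\bf u}$ then maps $\mathcal{S}$ into the punctured Southern hemisphere and $\cd\mathcal{S}$ to the equator so that $\int_{\mathcal{S}}{\bf u}^*\omega_{S^2}=0$ by a retraction argument, and contradicts this with a Bogomol'ny\u\i-type identity showing $\int_{\mathcal{S}}{\bf u}^*\omega_{S^2}\ge\|*F_a\|^2_{L^2(\mathcal{S})}>0$. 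Your alternative — applying the second-derivative test to $g:=\hat h-\log(|w+1|^2/|w-1|^2)$, which is engineered so that $g\le 0\Leftrightarrow F\le 0$, has the same Laplacian as $\hat h$ on $\mathcal{H}\setminus\{1\}$ (the subtracted term being harmonic there since $-1\notin\mathcal{H}$), vanishes on $\cd\ol{\mathcal{H}}$ including at $w=\infty$, and tends to $-\infty$ at the vortex — is sound: a positive maximum would have to occur at an interior point of the finite chart, where $\nabla^2_wg\le 0$ clashes with $F>0\Rightarrow\nabla^2_w\hat h>0$. Your approach is more elementary and purely PDE-theoretic, needing only the regularized Taubes equation and the reflexion symmetries; the paper's approach is heavier but exposes the geometric content of the inequality, namely that the magnetic field cannot change sign on the vortex's hemisphere for topological reasons. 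The only points to tighten are routine: state explicitly that the supremum of $g$ over $\ol{\mathcal{H}}$ is attained (upper semicontinuity on a compact set, with $g\equiv 0$ on the boundary and $g\to-\infty$ at $w=1$), and note that $F(1,\hat h(1))=-1\le 0$ trivially so the excluded point $w=1$ causes no loss.
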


\begin{proof}
To prove (i), we shall argue directly in terms of the fields $(a,{\bf u})$, assumed to satisfy the local form (\ref{vort1P1}) and (\ref{vort2P1}) of the vortex equations. By (\ref{PDES2}) and (\ref{actuallyglobal}), it suffices to prove that $*F_a$ is a nonnegative function on the closed hemisphere $\overline{\mathcal H}$. We know that $\bf u$ restricted to $\partial \mathcal{H}$ is a map from a great circle in $S^2$ to the equator of $S^2$ of unit winding; that it takes the value $\bf n$ at exactly one point (the vortex core); and that it never assumes the value $-\bf n$ (since the antivortex core lies in $S^2\setminus \overline{\mathcal{H}}$). 
Observe that $\bf u$ wraps $\mathcal H$ exactly once around the Northern hemisphere in the target; that is,
$$
\int_{\mathcal{H}} \mathbf{u}^*\omega_{S^2}= \frac{1}{2} \int_{S^2}\omega_{S^2}= 2\pi.
$$

Assume now, towards a contradiction, that there is a maximal region $\mathcal{S} \subset \mathcal{H}$ of positive measure on which $*F_a$ is negative. By (\ref{vort2P1}) with $\tau = 0$ we have ${\bf n} \cdot {\bf u} <0$ on $\mathcal S$; so $\bf u$ is mapping $\mathcal{S}$ to the interior of the Southern hemisphere {\em punctured at the South pole} (since $-{\bf n}$ is never reached) and, by continuity, the boundary $\partial \mathcal{S}$ to the equator. Since the punctured (closed) Southern hemisphere retracts to the equator, we conclude that
\begin{equation}\label{rubber}
\int_{\mathcal{S}}{\bf u}^*\omega_{S^2}=0.
\end{equation}

By Equation (\ref{topintegrand}),
\begin{eqnarray}
\int_{\mathcal{S}} {\bf u}^*\omega_{S^2} &=& \int_{\mathcal{S}} \left( {\bf u}\cdot ({\rm d}^a{\bf u}\times {\rm d}^a {\bf u} )  + ({\bf n}\cdot {\bf u}) F_a- {\rm d}(({\bf n}\cdot {\bf u}) a)  \right)\nonumber \\
&=&  \int_{\mathcal{S}} \left( {\bf u}\cdot ({\rm d}^a{\bf u}\times {\rm d}^a {\bf u} )  +  * F_a \wedge F_a  \right) - \int_{\partial \mathcal{S}} ({\bf n}\cdot{\bf u})a \nonumber \\
& = & \int_{\mathcal{S}}  {\bf u}\cdot ({\rm d}^a{\bf u}\times {\rm d}^a {\bf u} ) + \| \!*\!F_a\|^2_{L^2(\mathcal{S})} \label{allpositive}
\end{eqnarray}
using (\ref{vort2P1}) in the second step, and the fact that ${\bf u}$ maps $\partial \mathcal S$ to the equator in the third. But (\ref{vort1P1}) also implies that
$$
 \int_{\mathcal{S}}  {\bf u}\cdot ({\rm d}^a{\bf u}\times {\rm d}^a {\bf u} )= \int_{\mathcal{S}} {\rm d}^a{\bf u} \cdot ({\bf u}\times {\rm d}^a{\bf u})=\|{\rm d}^a{\bf u} \|^2_{L^2(\mathcal{S})} \ge 0.
$$
Hence, by (\ref{allpositive}), 
$$
\int_{\mathcal{S}} {\bf u}^*\omega_{S^2} \ge \|\!*\!F_a \|^2_{L^2(\mathcal{S})}=\int_{\mathcal{S}}(*F_a)^2 >0,
$$
which contradicts (\ref{rubber}).

Turning now to (ii), assume,
towards a contradiction, that there exists $w\in \mathcal{H}$ such that $\hat{h}(w)<0$. Then, since
$\hat{h}$ attains a global minimum on the compact set $\ol{\mathcal{H}}$, and vanishes on the boundary
circle $\cd\ol{\mathcal{H}}$ on symmetry grounds, 
$\hat{h}$ attains a negative global minimum at some interior point $w_*\in\mathcal{H}$. Consider the Hessian
matrix $(\hat{h}_{ij})=(\cd^2 \hat{h}/\cd w_i\cd w_j)|_{w_*}$ where $w=w_1+{\rm i}w_2$. Both eigenvalues of this matrix must be non-negative
(since $w_*$ is a minimum of $\hat{h}$), so ${\rm tr}(\hat{h}_{ij})=\nabla_w^2\hat{h}(w_*)\geq 0$. But, by part (i), $\nabla_w^2\hat{h}\leq 0$ on $\mathcal{H}$, so $\nabla_w^2 \hat{h}(w_*)=0$. 
Hence, by (\ref{PDES2}), 
$$
|w_*-1|^2 {\rm e}^{h(w_*)} = |w_*+1|^2.
$$
But $|w+1|>|w-1|$ on $\mathcal{H}$, so $\hat{h}(w_*)>0$, contradicting the assumption that $\hat{h}$ attains a
{\em negative} minimum at $w_*$.
\end{proof}

Next, we provide a pointwise estimate for $w \mapsto F(w,\hat h(w))$ on ${\mathcal{H}}$ from below.

\begin{lemma} \label{lem:Ffrombelow}
For all $\eps>0$ and all $w=: |w|{\rm e}^{{\rm i}\theta} \in {\mathcal{H}}$,
$$
F(w,\hat h(w)) \ge- \frac{2|w|\cos \theta}{1+|w|^2}.
$$
\end{lemma}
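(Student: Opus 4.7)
The key observation is that the claimed lower bound is exactly $F(w,0)$. Indeed, writing $w = w_1 + {\rm i}w_2$, a direct computation gives $|w+1|^2 - |w-1|^2 = 4w_1$ and $|w+1|^2+|w-1|^2 = 2(1+|w|^2)$, so
$$
F(w,0) = \frac{|w-1|^2 - |w+1|^2}{|w-1|^2 + |w+1|^2} = -\frac{2w_1}{1+|w|^2} = -\frac{2|w|\cos\theta}{1+|w|^2}.
$$
So the inequality to prove reduces to $F(w,\hat h(w)) \ge F(w,0)$ for all $w \in \mathcal{H}$.

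The plan is then to exploit monotonicity of $v \mapsto F(w,v)$ combined with Proposition~\ref{signestimates}(ii). Setting $A = |w-1|^2 {\rm e}^v$ and $B = |w+1|^2$, one has $F(w,v) = (A-B)/(A+B)$, and a one-line derivative computation yields $\partial_v F = 2AB/(A+B)^2 > 0$. Thus $F(w,\cdot)$ is strictly increasing on $\mathbb{R}$.

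By Proposition~\ref{signestimates}(ii), $\hat h(w) \ge 0$ throughout $\mathcal{H}$, so monotonicity gives $F(w,\hat h(w)) \ge F(w,0)$, which is the desired bound. There is no real obstacle here once the identification of the right-hand side with $F(w,0)$ is made; the substantive input is the sign estimate $\hat h \ge 0$ on $\mathcal{H}$ established in the previous proposition. The only small check worth mentioning is that the bound extends continuously (and trivially) to points where $w = 1$, since both sides agree there (both equal $0$).
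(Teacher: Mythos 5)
Your proof is correct and follows essentially the same route as the paper: both arguments rest on the monotonicity of $v\mapsto F(w,v)$ (the paper phrases it as monotonicity of $(ac-b)/(ac+b)$ in $c={\rm e}^v$) combined with the sign estimate $\hat h\ge 0$ from Proposition~\ref{signestimates}(ii), and on identifying the right-hand side with $F(w,0)$. One tiny slip in your closing remark: at $w=1$ both sides equal $-1$, not $0$ (since $|w-1|^2=0$ forces $F(1,v)=-1$ for every $v$), but the inequality holds there all the same.
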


\begin{proof}
It is clear that the rational function
$$
f(a,b,c):= \frac{ac-b}{ac+b}=1-\frac{2b}{ac+b}
$$
increases monotonically with $c$ when $a,b>0$. Hence
$$
f(a,b,c)\ge f(a,b,1)=-\frac{b-a}{b+a}
$$
for all $c\ge 1$. Now we apply this in the case $a=|w-1|^2$, $b=|w+1|^2$ and $c={\rm e}^{\hat h(w)}\ge 1$; the latter inequality is guaranteed by Proposition~\ref{signestimates} (ii).
\end{proof}

The next estimate, of the squared norm of $\hat h$ in the bilinear form associated to $-\Delta_{S^2}$ \ignore{(see \cite{Joh}, p.~191)}, is just enough for our present purposes.

\begin{lemma} \label{lem:justenough}
$\langle \hat h, -\Delta_{S^2}\hat h \rangle_{L^2(S^2)} \le C \eps.$
\end{lemma}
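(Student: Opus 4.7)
The plan is to pair the PDE (\ref{PDES2}) with $\hat h$, reduce to an integral over $\mathcal{H}$ using symmetry, and then substitute the pointwise bounds from Proposition~\ref{signestimates} and Lemma~\ref{lem:Ffrombelow}. The crucial point will be that the bound from Lemma~\ref{lem:Ffrombelow} vanishes as $|w|\to 0$ and $|w|\to\infty$, which compensates for the concentration of $f_\eps^2$ near the poles in a rescaled variable; a cruder estimate such as $|F|\le 1$ yields only $O(1)$, not $O(\eps)$.

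First I would multiply (\ref{PDES2}) by $\hat h$ and integrate, giving
\begin{equation*}
\langle \hat h, -\Delta_{S^2}\hat h\rangle_{L^2(S^2)} \;=\; -2R^2\eps^2 \int_{S^2} \hat h\, f_\eps^2\, F(w,\hat h)\, {\rm d}\mu_{S^2}.
\end{equation*}
The symmetries $\hat h(-\bar w)=-\hat h(w)$ and $F(-\bar w,\hat h(-\bar w))=-F(w,\hat h(w))$ (the latter from a direct calculation using $|-\bar w\pm 1|=|w\mp 1|$) show that $\hat h\, F$ is invariant under reflection in the imaginary axis, so the $S^2$-integral equals twice the integral over $\mathcal{H}$. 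On $\mathcal{H}$, Proposition~\ref{signestimates} gives $\hat h\ge 0$ and $F\le 0$, so the integrand is nonnegative and the identity reads $4R^2\eps^2\int_{\mathcal{H}}\hat h\, f_\eps^2\,(-F)\, {\rm d}\mu_{S^2}$.

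Next I would insert the pointwise bounds $0\le \hat h\le C$ from Corollary~\ref{cor:C0est} and $0\le -F(w,\hat h)\le \tfrac{2|w|\cos\theta}{1+|w|^2}$ from Lemma~\ref{lem:Ffrombelow}, where $w=re^{{\rm i}\theta}$. Using $f_\eps^2\, {\rm d}\mu_{S^2}=\tfrac{4}{(1+\eps^2|w|^2)^2}\, {\rm d}w_1\, {\rm d}w_2$ and passing to polar coordinates on $\mathcal{H}$, the angular integration $\int_{-\pi/2}^{\pi/2}\cos\theta\, {\rm d}\theta=2$ is trivial and leaves
\begin{equation*}
\langle \hat h, -\Delta_{S^2}\hat h\rangle_{L^2(S^2)} \;\le\; C\,\eps^2 \int_0^\infty \frac{r^2}{(1+r^2)(1+\eps^2 r^2)^2}\, {\rm d}r.
\end{equation*}

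Finally, the rescaling $u=\eps r$ turns the remaining integral into
\begin{equation*}
\eps^2\int_0^\infty \frac{r^2}{(1+r^2)(1+\eps^2 r^2)^2}\, {\rm d}r \;=\; \eps \int_0^\infty \frac{u^2}{(\eps^2+u^2)(1+u^2)^2}\, {\rm d}u \;\le\; \eps \int_0^\infty \frac{{\rm d}u}{(1+u^2)^2},
\end{equation*}
which is bounded by a constant times $\eps$, as claimed. The only mildly delicate step is verifying the reflection symmetry of $F(\cdot,\hat h(\cdot))$ and hence that the $S^2$-integral reduces to twice the integral over $\mathcal{H}$; everything else is a bookkeeping exercise in applying the already-established sign and supremum bounds and then doing an elementary substitution.
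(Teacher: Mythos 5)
Your proposal is correct and follows essentially the same route as the paper: pair (\ref{PDES2}) with $\hat h$, use the reflexion symmetry to reduce to an integral over $\mathcal{H}$, insert the sign estimates of Proposition~\ref{signestimates}, the $C^0$ bound of Corollary~\ref{cor:C0est} and the pointwise bound of Lemma~\ref{lem:Ffrombelow}, and evaluate the resulting radial integral. The only cosmetic difference is that you bound the final elementary integral via the substitution $u=\eps r$, where the paper simply quotes the bound $\eps^2\int_0^\infty r^2(1+\eps^2r^2)^{-2}(1+r^2)^{-1}\,\mathrm{d}r\le C\eps$.
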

\begin{proof}
By reflexion symmetry, (\ref{PDES2}) and Proposition~\ref{signestimates},
\begin{eqnarray*}
\langle \hat h, -\Delta_{S^2}\hat h \rangle_{L^2(S^2)}&  = & 4\eps^2 R^2 \int_{\mathcal{H}} \left( \frac{1+|w|^2}{1+\eps^2 |w|^2}\right)^2 (-F(w,\hat h (w)))\hat h(w) \frac{{\rm d}^2 w}{(1+|w|^2)^2} \\
&\le & C^2 \eps^2 \int_\mathcal{H} \frac{-F(w,\hat h (w))}{(1+\eps^2 |w|^2)^2}	\, {\rm d}^2w   \\
& \le &  C \eps^2 \int_0^\infty \frac{r^2\, {\rm d}r}{(1+\eps^2 r^2)^2(1+r^2)}  \\
& \le & C \eps^2 \, \frac{1}{\eps(\eps+1)^2} \\
& \le & C\eps.
\end{eqnarray*}
We made use of  Corollary~\ref{cor:C0est} in the second step, and of Lemma~\ref{lem:Ffrombelow} in the third.
\end{proof}

\begin{corollary} \label{cor:H1est}
$\| \hat h \|_{H^1(S^2)} \le C\eps^{1/2}.$
\end{corollary}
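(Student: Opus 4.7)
The plan is to derive this Corollary by directly combining Lemma~\ref{lem:justenough} with the elliptic inequality recorded as Proposition~\ref{SEE}(iii). The only preliminary observation needed is that Proposition~\ref{SEE}(iii) has a hypothesis --- the function must be $L^2$-orthogonal to the constants --- so I should first verify that $\hat{h}$ satisfies $\ip{1,\hat{h}}_{L^2(S^2)}=0$.

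For this, I would invoke the reflexion symmetry established earlier in Section~\ref{sec:P1S2}: the antipodal/mirror symmetry of the vortex-antivortex configuration about the imaginary $w$-axis forces $\tilde{h}$, and hence $\hat{h}=\tilde{h}\circ\mathcal{R}_\eps$, to be odd under $w\mapsto -\bar{w}$. Since this reflexion is an isometry of $S^2$, integrating an odd function against the invariant measure yields zero, so $\ip{1,\hat{h}}_{L^2(S^2)}=0$, as required. (This is the same symmetry argument used at the start of Lemma~\ref{lem:H2normestimate}.)

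With the orthogonality in hand, Proposition~\ref{SEE}(iii) applies to $u=\hat{h}$ and gives
\begin{equation*}
\|\hat{h}\|_{H^1(S^2)}^2\le C\,\ip{\hat{h},-\Delta_{S^2}\hat{h}}_{L^2(S^2)}.
\end{equation*}
Substituting the bound $\ip{\hat{h},-\Delta_{S^2}\hat{h}}_{L^2(S^2)}\le C\eps$ from Lemma~\ref{lem:justenough} and taking the square root yields $\|\hat{h}\|_{H^1(S^2)}\le C\eps^{1/2}$, which is the claim.

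There is really no obstacle here: the corollary is a one-line consequence of the two results stated immediately before it, once the zero-mean hypothesis is checked. The substantive work was done in Lemma~\ref{lem:justenough}, which combined the $C^0$ bound of Corollary~\ref{cor:C0est} with the pointwise lower bound on $F(w,\hat{h}(w))$ from Lemma~\ref{lem:Ffrombelow} to extract the crucial factor of $\eps$; the present corollary merely repackages that estimate into a Sobolev norm, which is the form that will presumably be needed to feed into the subsequent Sobolev embedding $H^1\hookrightarrow L^p$ (or a bootstrap via Proposition~\ref{SEE}(ii)) in the ensuing proof of Theorem~\ref{thmvol}.
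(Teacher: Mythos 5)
Your proposal is correct and is essentially identical to the paper's own proof: both check the zero-mean condition $\ip{1,\hat h}_{L^2(S^2)}=0$ via the reflexion (oddness) symmetry of $\hat h$, then apply Proposition~\ref{SEE}(iii) and insert the bound from Lemma~\ref{lem:justenough}. Nothing is missing.
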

\begin{proof}
This follows from Lemma~\ref{lem:justenough} by applying Proposition~\ref{SEE}(iii), noting that $\ip{1,\hat h}_{S^2}=0$ {by} reflexion symmetry.
\end{proof}

Let us choose and fix a pair of open disks $\DD'\subset\DD\subset \mathcal{H}$ centred at $w=1$, with $\ol{\DD'}\subset\DD$, for instance $\DD'=B_{1/4}(1)$, $\DD=B_{1/2}(1)$. Recall our notation $w=:w_1+{\rm i}w_2$, and let $\partial_1\hat h (w):=  \frac{\partial \hat h}{\partial w_1}(w)$. We  perform our last estimates on $\DD$, on which $\hat h$ satisfies (\ref{PDES2ish}).

\begin{lemma} 	\label{lem:H1der}
$ \| \partial_1 \hat h \|_{H^2(\DD')} \le C\eps^{1/2}.$
\end{lemma}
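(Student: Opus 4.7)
The plan is to differentiate the regularized Taubes equation (\ref{PDES2ish}) with respect to $w_1=\Re w$, producing a linear elliptic equation for $\partial_1\hat h$ on $\DD$, and then to apply the interior elliptic estimate of Proposition~\ref{SEE}(i) on the pair $\DD'\subset\DD$ in order to bootstrap from the $L^2$-control of $\hat h$ already established in Corollary~\ref{cor:H1est}.

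First I would write (\ref{PDES2ish}) as $\nabla_w^2\hat h=c_\eps(w)F(w,\hat h)$ with $c_\eps(w):=8R^2\eps^2/(1+\eps^2|w|^2)^2$, and differentiate both sides with respect to $w_1$; since $\hat h$ is smooth on $S^2$, this can be done classically on $\DD$, yielding
$$
\nabla_w^2(\partial_1\hat h) = (\partial_1 c_\eps)\,F(w,\hat h) + c_\eps(w)\bigl[(\partial_{w_1}F)(w,\hat h) + (\partial_v F)(w,\hat h)\,\partial_1\hat h\bigr].
$$
Next I would observe that $c_\eps$ and $\partial_1 c_\eps$ are both $O(\eps^2)$ uniformly on the bounded set $\overline{\DD}$, while $F$ together with its partial derivatives are continuous functions of $(w,v)$ and bounded on compact ranges of $v$. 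Combined with the uniform $C^0$-bound on $\hat h$ from Corollary~\ref{cor:C0est}, this gives the pointwise inequality
$$
|\nabla_w^2(\partial_1\hat h)(w)| \;\le\; C\eps^2\bigl(1+|\partial_1\hat h(w)|\bigr), \qquad w\in\DD.
$$

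Passing to $L^2(\DD)$ norms and invoking Corollary~\ref{cor:H1est} (together with the fact that the Euclidean and spherical area measures on $\overline{\DD}$ are comparable, since $\overline{\DD}$ is a compact subset of the coordinate patch $\mathcal{H}$) to bound $\|\partial_1\hat h\|_{L^2(\DD)}\le C\eps^{1/2}$, one gets $\|\nabla_w^2(\partial_1\hat h)\|_{L^2(\DD)} \le C\eps^2(1+\eps^{1/2}) \le C\eps^2$. Proposition~\ref{SEE}(i), applied to $v=\partial_1\hat h$ on the pair $\DD'\subset\DD$, then closes the estimate:
$$
\|\partial_1\hat h\|_{H^2(\DD')} \;\le\; C\bigl(\|\nabla_w^2(\partial_1\hat h)\|_{L^2(\DD)} + \|\partial_1\hat h\|_{L^2(\DD')}\bigr) \;\le\; C(\eps^2+\eps^{1/2}) \;\le\; C\eps^{1/2}.
$$
No serious obstacle is anticipated: the argument is a routine elliptic bootstrap, and the only care required is in tracking the powers of $\eps$ through the differentiation of $c_\eps$ and of $F$, and in noticing that the unwanted $\partial_1\hat h$-term on the right-hand side of the differentiated equation is multiplied by an $O(\eps^2)$ prefactor, so that its $L^2$-norm (already controlled at order $\eps^{1/2}$) feeds back into the final estimate only at subleading order.
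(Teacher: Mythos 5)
Your proposal is correct and follows essentially the same route as the paper: differentiate the regularized Taubes equation (\ref{PDES2ish}) in $w_1$, use the uniform $C^0$ bound of Corollary~\ref{cor:C0est} to control $F$ and its partial derivatives on $\DD$ (where the denominator of $F$ is bounded below since $|w+1|$ is), bound $\|\partial_1\hat h\|_{L^2(\DD)}$ by $\|\hat h\|_{H^1(S^2)}\le C\eps^{1/2}$ via Corollary~\ref{cor:H1est}, and close with the interior estimate of Proposition~\ref{SEE}(i). The only cosmetic difference is that the paper records the coefficient arising from $\partial_1 c_\eps$ as $O(\eps^4)$ rather than your (still sufficient) $O(\eps^2)$, and works with squared norms; the final $C\eps^{1/2}$ bound is identical.
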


\begin{proof}
Differentiating (\ref{PDES2ish}) with respect to $w_1$, we obtain
\begin{equation*}
\Delta_{\RR^2}\partial_1 \hat h = - \frac{32\eps^4 R^2 w_1}{(1+\eps^2|w|^2)^3} F(w,\hat h(w)) 
+ \frac{8\eps^2 R^2}{(1+\eps^2 |w|^2)^2}\left(F_1(w,\hat h) + F_3(w,\hat h)\partial_1 \hat h  \right),
\end{equation*}
where $F_1$ and $F_3$ denote partial derivatives of the function of three real variables $(w_1,w_2,u)\mapsto F(w_1+{\rm i}w_2,u)$ defined in (\ref{functionF}).

By Corollary~\ref{cor:C0est} (and elementary estimates that we omit), the functions $F(w,\hat h(w))$, $F_1(w,\hat h(w))$ and $F_3(w,\hat h(w))$ on $\DD$
are bounded independent of $\eps$. Hence
$$
\| \Delta_{\RR^2}\partial_1 \hat h \|^2_{L^2(\DD)} \le C\eps^4 \left(1+ \|  \partial_1 \hat h\|^2_{L^2(\DD)} \right).
$$
Hence, by Proposition~\ref{SEE}(i),
\begin{eqnarray*}
 \| \partial_1 \hat h\|^2_{H^2(\DD')}& \le& C \left(  \| \Delta_{\RR^2}\partial_1 \hat h \|^2_{L^2(\DD)} + \| \partial_1 \hat h \|^2_{L^2(\DD')}  \right)  \\
 &\le & C\left( \eps^4 + \| \partial_1 \hat h \|^2_{L^2(\DD)}\right) \\
 &\le & C\left( \eps^4 + \|  \hat h \|^2_{H^1(\DD)}\right)   \\
 &\le & C\left( \eps^4 + \|  \hat h \|^2_{H^1(S^2)}\right)   \\
 & \le & C(\eps^4 + \eps),
\end{eqnarray*}
where we used Corollary~\ref{cor:H1est} in the last step. This proves the lemma.
\end{proof}

\begin{corollary} \label{cor:C0der}
$ \| \partial_1 \hat h \|_{C^0(\DD')} \le C\eps^{1/2}.$
\end{corollary}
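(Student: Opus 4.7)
The corollary follows immediately by combining Lemma~\ref{lem:H1der} with the Sobolev embedding from Proposition~\ref{SOB}. Specifically, the plan is to apply Proposition~\ref{SOB} to the disk $\DD'$ with the function $\partial_1 \hat h$, which is smooth on $\DD$ and hence on $\ol{\DD'}$ since $\ol{\DD'}\subset\DD\subset\mathcal{H}$ and $\hat h$ is smooth on $S^2$.

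Explicitly, Proposition~\ref{SOB} applied on the open disk $\DD'$ yields
$$
\|\partial_1 \hat h\|_{C^0(\ol{\DD'})}\leq C(\DD')\|\partial_1\hat h\|_{H^2(\DD')},
$$
and Lemma~\ref{lem:H1der} bounds the right-hand side by $C\eps^{1/2}$. Since $\|\partial_1\hat h\|_{C^0(\DD')}\leq \|\partial_1\hat h\|_{C^0(\ol{\DD'})}$, the claimed estimate follows at once. There is no real obstacle here, as all the analytic work (the differentiation of the regularized Taubes equation, the elliptic interior estimate, and the global $H^1$-bound derived from the sign estimates of Proposition~\ref{signestimates} and Lemma~\ref{lem:Ffrombelow}) has already been accomplished in the preceding lemmas; this corollary is simply the step that converts the Sobolev control into pointwise control, which will in turn yield the desired limit $\lim_{\eps\to 0}(\eps b(\eps))=-1$ via the formula~(\ref{thequantity}).
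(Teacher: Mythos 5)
Your proposal is correct and follows exactly the paper's own one-line argument: combine the $H^2(\DD')$ bound from Lemma~\ref{lem:H1der} with the Sobolev embedding $H^2(\DD')\hookrightarrow C^0(\ol{\DD'})$ of Proposition~\ref{SOB}. Nothing further is needed.
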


\begin{proof}
This follows immediately from Lemma~\ref{lem:H1der} and Proposition~\ref{SOB}.
\end{proof}

Corollary~\ref{cor:C0der} implies that $\lim_{\eps\ra 0} \cd_1\hat h|_{w=1}=0$, which, by 
Equation (\ref{thequantity}), establishes the limit (\ref{suggestion}). This completes the proof of Theorem~\ref{thmvol}.

\subsection{Incompleteness of ${\sf M}_{(1,1)}^{\PP^1}(S^2_R)$}\label{sec:inco}

In this section we will prove

\begin{theorem}\label{incompleteness}
${\sf M}_{(1,1)}^{\PP^1}(S^2_R)$ is incomplete with respect to the $L^2$ metric.
\end{theorem}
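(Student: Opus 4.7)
My plan is to produce a Cauchy sequence in the $L^2$ metric that cannot converge in $\mathsf{M}_{(1,1)}^{\PP^1}(S^2_R)$, so that (metric) Hopf--Rinow forces incompleteness. The natural candidate is the radial ray $\gamma(\eps)=(\eps,-\eps)$ inside the centred pair locus, which is totally geodesic as the fixed set of the isometric involution $(z_\tp,z_\tm)\mapsto(-z_\tm,-z_\tp)$ induced by $h(z)\mapsto -h(-z)$ (same argument as in Section~\ref{sec:pairsC}). Its arc length from $\eps=\delta$ to $\eps=1$ is $L(\delta)=\int_\delta^1\sqrt{F(\eps)}\,\d\eps$, where by (\ref{g0pairsS2}) together with Proposition~\ref{invKaehler} one has $F(\eps)=-A'(\eps)/\eps$ with
$$
A(\eps)=2\pi\!\left(2R^2\,\frac{1-\eps^2}{1+\eps^2}-\cd_1\hat h_\eps|_{w=1}\right).
$$
The work of Section~\ref{sec:vol11} gives $A(0):=\lim_{\eps\to 0}A(\eps)=4\pi R^2$, together with the quantitative H\"older bound $|A(\eps)-A(0)|\leq C\eps^{1/2}$ inherited directly from Corollary~\ref{cor:C0der}. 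The task reduces to proving $L:=\lim_{\delta\to 0}L(\delta)<\infty$.

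I would bound $L(\delta)$ by a weighted Cauchy--Schwarz: for any $\alpha\in(0,1/2)$,
$$
L(\delta)^2\leq \left(\int_\delta^1 (-A'(\eps))\,\eps^{2\alpha-1}\,\d\eps\right)\!\left(\int_\delta^1 \eps^{-2\alpha}\,\d\eps\right).
$$
The second factor tends to $1/(1-2\alpha)$ as $\delta\to 0$. For the first, set $R(\eps):=A(\eps)-A(0)$ and integrate by parts. The boundary contribution $A(0)\delta^{2\alpha-1}$ cancels exactly against $(2\alpha-1)A(0)\int_\delta^1\eps^{2\alpha-2}\,\d\eps=A(0)(1-\delta^{2\alpha-1})$, leaving
$$
\int_\delta^1(-A'(\eps))\,\eps^{2\alpha-1}\,\d\eps=A(0)-A(1)+R(\delta)\delta^{2\alpha-1}+(2\alpha-1)\int_\delta^1 R(\eps)\,\eps^{2\alpha-2}\,\d\eps.
$$
Substituting $|R(\eps)|\leq C\eps^{1/2}$, the remainder $R(\delta)\delta^{2\alpha-1}$ is $O(\delta^{2\alpha-1/2})$ and the final integrand is dominated in absolute value by $\eps^{2\alpha-3/2}$; both are integrable at zero and converge to finite limits exactly when $\alpha>1/4$.

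Choosing any $\alpha\in(1/4,1/2)$ (e.g.\ $\alpha=3/8$) therefore bounds $L(\delta)$ uniformly in $\delta$, so $L<\infty$. The sequence $\gamma(1/n)$ is then Cauchy in $\mathsf{M}_{(1,1)}^{\PP^1}(S^2_R)$, but its limit in $S^2\times S^2$ is the diagonal point $(0,0)\in D_{S^2}$, which lies outside the moduli space; hence no limit exists inside $\mathsf{M}_{(1,1)}^{\PP^1}(S^2_R)$. The main obstacle is the cancellation of divergent boundary contributions after integration by parts: the H\"older-$1/2$ decay of $R$ furnished by Corollary~\ref{cor:C0der} is precisely the threshold that opens up the admissible range $\alpha\in(1/4,1/2)$, and a weaker rate of convergence would cause the strategy to break down. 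In this sense the very same PDE estimates that power Theorem~\ref{thmvol} simultaneously deliver incompleteness.
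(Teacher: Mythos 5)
Your proof is correct, and it reaches the conclusion by a genuinely different route from the paper. Both arguments use the same curve (the ray of centred pairs) and the same final logic (a finite-length curve escaping every compact set), but they control the length integral $\int_0^1\sqrt{F(\eps)}\,\d\eps$ in different ways. The paper differentiates the regularized Taubes equation with respect to the parameter $\eps$, introduces $u_\eps=\cd\hat h_\eps/\cd\eps$, and proves two further elliptic estimates (Lemmas \ref{demall} and \ref{pardol}) culminating in the pointwise bound $|\cd_1 u_\eps(1)|\leq C\eps^{-1/2}$; this gives $F(\eps)\leq C\eps^{-3/2}$ and hence integrability of $\sqrt{F}$ directly. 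You instead avoid any estimate on the $\eps$-derivative of the solution: writing $F=-A'/\eps$, the weighted Cauchy--Schwarz inequality followed by integration by parts transfers the problem onto the function $A$ itself, where the bound $|A(\eps)-A(0)|\leq C\eps^{1/2}$ already supplied by Corollary \ref{cor:C0der} (i.e.\ by the machinery of Theorem \ref{thmvol}) suffices, with the cancellation of the divergent boundary terms being the crucial observation. This is more economical --- no new PDE analysis is needed beyond Section \ref{sec:vol11} --- at the cost of yielding only the integrability of $\sqrt{F}$ rather than a pointwise bound on the metric coefficient near the boundary, which the paper's argument provides as a by-product. Two small points you should make explicit: the integration by parts on $[\delta,1]$ requires $A$ to be $C^1$ on $(0,1]$, which follows from the smooth dependence of $\hat h_\eps$ on $\eps$ via the implicit function theorem (as invoked at the start of the paper's Section 5.3); and the factorization $\sqrt{-A'/\eps}=\sqrt{-A'\,\eps^{2\alpha-1}}\cdot\eps^{-\alpha}$ requires $-A'\geq 0$, which is exactly the monotonicity of $A$ guaranteed by Proposition \ref{invKaehler} (equivalently, positivity of $g_{L^2}$). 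Neither is a gap, merely a citation to add.
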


It is helpful to denote the unique solution of (\ref{PDES2}) by
$\hat h_\eps$, to remind ourselves that this function depends parametrically on $\eps$. By the implicit function theorem
\cite[p.\ 420]{DonKro} applied to the smooth mapping
$$
\R\oplus H^{k+2}(S^2)\ra H^k(S^2),\qquad (\eps,\hat h)\mapsto -\Delta_{S^2}\hat h+2 R^2\eps^2f_\eps(w)^2F(w,\hat h),
$$
(where $k\geq 0$ and $f_\eps(w)=(1+|w|^2)/(1+|\eps w|^2)$)
the function $u_\eps:=\cd \hat h_\eps/\cd \eps:S^2\ra\R$ is smooth and satisfies the linear inhomogeneous PDE
\beq\label{uPDE}
-\Delta_{S^2}u_\eps+4R^2\eps \left(f_\eps+\eps\frac{\cd f_\eps}{\cd\eps}\right)f_\eps F(w,\hat{h}_\eps)
+2R^2\eps^2 f_\eps^2F_3(w,\hat h_\eps)u_\eps=0,
\eeq
where, as before,
$$
F_3(w,v):=\frac{\cd F}{\cd v}(w,v)=\frac{2|w^2-1|^2e^v}{(|w-1|^2e^v+|w+1|^2)^2}\geq 0.
$$
Theorem \ref{incompleteness} will follow from a suitable estimate of $\cd_1 u_\eps|_{w=1}$. 

As before, we use $C$ to denote a variable bounding constant, independent of $\eps$.

\begin{lemma} \label{demall}
$\|u_\eps\|_{H^1(S^2)}\leq C/\sqrt{\eps}$. 
\end{lemma}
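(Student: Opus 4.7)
The plan is an energy estimate for $u_\eps$, refined by a careful choice of H\"older exponents. Multiplying (\ref{uPDE}) by $u_\eps$ and integrating over $S^2$ gives
\begin{equation*}
\int_{S^2}|\nabla u_\eps|^2\,\omega_{S^2} + \int_{S^2}2R^2\eps^2 f_\eps^2 F_3(w,\hat h_\eps)\,u_\eps^2\,\omega_{S^2} = -4R^2\eps\int_{S^2}(f_\eps+\eps\cd_\eps f_\eps)f_\eps F(w,\hat h_\eps)\,u_\eps\,\omega_{S^2}.
\end{equation*}
Because $F_3\geq 0$, the mass term on the left is nonnegative and may be discarded. The reflexion symmetry $\hat h_\eps(-w)=-\hat h_\eps(w)$ passes to $u_\eps=\cd\hat h_\eps/\cd\eps$, so $\ip{1,u_\eps}_{L^2(S^2)}=0$ and Proposition~\ref{SEE}(iii) upgrades the Dirichlet-integral bound to $\|u_\eps\|_{H^1}^2\leq C\|\nabla u_\eps\|_{L^2}^2$. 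Thus it suffices to bound the right-hand side in absolute value by $C\eps^{-1/2}\|u_\eps\|_{H^1}$.

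A direct Cauchy--Schwarz pairing $L^2\times L^2$ only yields $\eps\|f_\eps^2F\|_{L^2}\|u_\eps\|_{L^2}\leq C\eps^{-1}\|u_\eps\|_{H^1}$, which is short by a factor of $\sqrt{\eps}$. The improvement will come from H\"older with conjugate exponents $4/3$ and $4$, combined with the standard Sobolev embedding $H^1(S^2)\hookrightarrow L^4(S^2)$ \cite[p.~44]{Aub}: setting $g:=4R^2\eps(f_\eps+\eps\cd_\eps f_\eps)f_\eps F(w,\hat h_\eps)$,
\begin{equation*}
\Big|\int_{S^2}g\,u_\eps\,\omega_{S^2}\Big|\leq \|g\|_{L^{4/3}(S^2)}\,\|u_\eps\|_{L^4(S^2)}\leq C\,\|g\|_{L^{4/3}(S^2)}\,\|u_\eps\|_{H^1(S^2)}.
\end{equation*}

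The crux is to show $\|g\|_{L^{4/3}(S^2)}\leq C\eps^{-1/2}$. The identity $(f_\eps+\eps\cd_\eps f_\eps)/f_\eps=(1-\eps^2|w|^2)/(1+\eps^2|w|^2)$ gives $|f_\eps+\eps\cd_\eps f_\eps|\leq f_\eps$, and Lemma~\ref{lem:Ffrombelow} (extended from $\mathcal{H}$ to all of $\C$ by the symmetries $F(-w,-v)=-F(w,v)$ and $\hat h_\eps(-w)=-\hat h_\eps(w)$) gives $|F(w,\hat h_\eps)|\leq 2|w|/(1+|w|^2)$; together these yield the pointwise estimate $|g(w)|\leq 8R^2\eps\,|w|(1+|w|^2)/(1+\eps^2|w|^2)^2$. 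Substituting this into $\|g\|_{L^{4/3}(S^2)}^{4/3}=\int|g|^{4/3}\omega_{S^2}$, passing to polar coordinates, and making the rescaling $s=\eps r$ reduces the radial integral to an $\eps$-independent convergent quantity multiplied by $\eps^{-2}$; combined with the prefactor $\eps^{4/3}$ from $|g|^{4/3}$, this gives $\|g\|_{L^{4/3}}^{4/3}\leq C\eps^{-2/3}$, i.e. $\|g\|_{L^{4/3}}\leq C\eps^{-1/2}$. Assembling everything, $\|u_\eps\|_{H^1}^2\leq C\eps^{-1/2}\|u_\eps\|_{H^1}$, whence the lemma. The main obstacle is precisely the selection of H\"older exponents: the source $g$ concentrates on the annulus $\eps|w|\sim 1$, where the scaling dictates $\|g\|_{L^p(S^2)}\sim \eps^{2/p-2}$, so smaller $p$ is better, but one must retain Sobolev control on $u_\eps$ via $\|u_\eps\|_{L^{p'}}\leq C_{p'}\|u_\eps\|_{H^1}$; the pairing $p=4/3$, $p'=4$ is the simplest balance that yields the stated $\eps^{-1/2}$.
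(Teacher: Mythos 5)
Your proof is correct, but it takes a genuinely different route from the paper's. Both arguments start from the same energy identity (multiply (\ref{uPDE}) by $u_\eps$, integrate, discard the nonnegative $F_3$ term, and close via Proposition~\ref{SEE}(iii) using the mean-zero property of $u_\eps$); the difference lies entirely in how the source term is beaten down to $O(\eps^{-1/2})\|u_\eps\|_{H^1}$. The paper substitutes Taubes' equation (\ref{PDES2}) back into (\ref{uPDE}) to rewrite the inhomogeneity as $\frac{2}{\eps}\frac{1-\eps^2|w|^2}{1+\eps^2|w|^2}\,\Delta_{S^2}\hat h_\eps$, restricts to the hemisphere $\mathcal{H}$ where $-\Delta_{S^2}\hat h_\eps\geq 0$ (Proposition~\ref{signestimates}), integrates by parts against $|u_\eps|$ (which vanishes on $\cd\mathcal{H}$), and then cashes in the previously established smallness $\|\hat h_\eps\|_{H^1(S^2)}\leq C\eps^{1/2}$ of Corollary~\ref{cor:H1est}; the factor $\eps^{1/2}$ thus comes from $\hat h_\eps$ itself. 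You instead keep the source in its raw form, bound it pointwise by $C\eps|w|(1+|w|^2)/(1+\eps^2|w|^2)^2$ using Lemma~\ref{lem:Ffrombelow} together with the sign information $F\leq 0$ on $\mathcal{H}$ and the odd symmetry, and extract the gain from the fact that this profile concentrates at scale $|w|\sim 1/\eps$, so that its $L^{4/3}(S^2)$ norm is $O(\eps^{-1/2})$; the pairing with $u_\eps$ is then handled by H\"older and the two-dimensional Sobolev embedding $H^1(S^2)\hookrightarrow L^4(S^2)$. Your route is self-contained at the level of Lemma~\ref{lem:Ffrombelow} and does not need Corollary~\ref{cor:H1est}, nor the hemisphere integration by parts; the price is the extra embedding $H^1\hookrightarrow L^4$, which is standard but not among the tools the paper explicitly lists in Proposition~\ref{SOB}, and a slightly more delicate radial integral (your scaling count $\|g\|_{L^p(S^2)}\sim\eps^{2/p-2}$ and the choice $p=4/3$, $p'=4$ are correct and do yield exactly the stated power).
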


\begin{proof}
Since $\hat{h}_\eps$ is odd under $w\mapsto-w$ for all $\eps\in(0,1)$, $u_\eps$ also has this symmetry. Hence
$$
\ip{u_\eps,-\Delta_{S^2}u_\eps}_{L^2(S^2)}=2\ip{u_\eps,-\Delta_{S^2}u_\eps}_{L^2(\mathcal{H})},
$$
where, once again, $\mathcal{H}$ denotes the open hemisphere $w_1>0$ (with its round metric). Now, (\ref{uPDE}) and (\ref{PDES2}) imply that
$$
-\Delta_{S^2}u_\eps+\frac{2}{\eps}\left(\frac{1-\eps^2|w|^2}{1+\eps^2|w|^2}\right)\Delta_{S^2}\hat{h}_\eps+2R^2\eps^2f_\eps(w)^2 F_3(w,\hat{h}_\eps)u_\eps=0,
$$
and hence, having noted that
 $F_3(w,\hat h_\eps)\geq 0$,
\ben
\ip{u_\eps,-\Delta_{S^2}u_\eps}_{L^2(S^2)}&\leq&\frac4\eps\ip{u_\eps,\frac{1-\eps^2|w|^2}{1+\eps^2|w|^2}(-\Delta_{S^2}\hat{h}_\eps)}_{L^2(\mathcal{H})}\\
\ignore{&\leq&\frac4\eps\ip{\left|\frac{1-\eps^2|w|^2}{1+\eps^2|w|^2}u_\eps\right|,(-\Delta_{S^2}\hat h_\eps)}_{L^2(\mathcal{H})}\\}
&\leq&\frac4\eps\ip{|u_\eps|,(-\Delta_{S^2}\hat h_\eps)}_{L^2(\mathcal{H})}
\een
since $-\Delta_{S^2}\hat{h}_\eps\geq 0$ on $\mathcal{H}$ by Proposition \ref{signestimates}, and $|(1-\eps^2|w|^2)/(1+\eps^2|w|^2)|\leq 1$. Now,
by its odd reflexion symmetry, ${u}_\eps$ vanishes on $\cd\mathcal{H}$, so
$$
\ip{|u_\eps|,-\Delta_{S^2}\hat h_\eps}_{L^2(\mathcal{H})}=\ip{\d|u_\eps|,\d\hat{h}_\eps}_{L^2(\mathcal{H})}
\leq\|\d|u_\eps|\|_{L^2(\mathcal{H})}\|\d\hat{h}_\eps\|_{L^2(\mathcal{H})}.
$$
Hence, again exploiting the symmetry of $u_\eps$ and $\hat{h}_\eps$,
$$
\ip{u_\eps,-\Delta_{S^2}u_\eps}_{L^2(S^2)}\leq\frac2\eps\|\d|u_\eps|\|_{L^2(S^2)}\|\d\hat{h}\|_{L^2(S^2)}
\leq\frac2\eps\|u_\eps\|_{H^1(S^2)}\|\hat{h}\|_{H^1(S^2)}
\leq\frac{C}{\sqrt{\eps}}\|u_\eps\|_{H^1(S^2)}
$$
by Corollary \ref{cor:H1est}. Since $\ip{1,u_\eps}_{L^2(S^2)}=0$ by reflexion symmetry, the Lemma now follows from Proposition \ref{SEE}(iii).
\end{proof}

As in Section \ref{sec:vol11}, let $\DD'\subset\DD\subset\R^2$ be two open disks centred on $w=1$, with $\ol{\DD'}\subset\DD$.

\begin{lemma}\label{pardol}
$\|\cd_1u_\eps\|_{H^2(\DD')}\leq C/\sqrt{\eps}$.
\end{lemma}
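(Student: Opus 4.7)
The plan is to mimic the strategy of Lemma~\ref{lem:H1der}, now applied to (\ref{uPDE}) in place of (\ref{PDES2ish}). Concretely, I will choose a chain of nested open disks $\DD'\subset\DD_*\subset\DD_{**}\subset\DD$ centred on $w=1$, each with closure contained in the next, and apply Proposition~\ref{SEE}(i) to $\partial_1 u_\eps$ on the pair $(\DD',\DD_*)$. The main task reduces to showing that $\|\Delta_{\R^2}\partial_1 u_\eps\|_{L^2(\DD_*)}\le C\eps$, after which the bound $\|\partial_1 u_\eps\|_{L^2(\DD')}\le\|u_\eps\|_{H^1(S^2)}\le C/\sqrt\eps$ supplied by Lemma~\ref{demall} immediately yields the claim.

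The essential difficulty, compared with Lemma~\ref{lem:H1der}, is that $u_\eps$ itself is only controlled in $H^1(S^2)$, with an $\eps^{-1/2}$ blow-up as $\eps\to 0$. After differentiating (\ref{uPDE}) in $w_1$, the right-hand side will contain a quadratic cross-term proportional to $\eps^2\, u_\eps\,\partial_1\hat h_\eps$; estimating both factors in $L^2$ alone gives $\eps^2\cdot\eps^{-1/2}\cdot\eps^{1/2}=\eps^2$, but to treat all terms uniformly and cleanly extract an $O(\eps)$ bound I want a \emph{pointwise} estimate $\|u_\eps\|_{C^0(\DD_*)}\le C/\sqrt\eps$. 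I will secure this as a preliminary step, by applying Proposition~\ref{SEE}(i) directly to (\ref{uPDE}) on the pair $(\DD_*,\DD_{**})$ after converting $-\Delta_{S^2}$ to $-\Delta_{\R^2}$ through the conformal factor $(1+|w|^2)^2/4$ (bounded above and below on $\DD_{**}$), and then appealing to Proposition~\ref{SOB}. Using $|F|\le 1$ and $|F_3|\le C$ on $\DD_{**}$ (thanks to Corollary~\ref{cor:C0est}) together with $\|u_\eps\|_{L^2(\DD_{**})}\le C/\sqrt\eps$, one obtains $\|\Delta_{\R^2}u_\eps\|_{L^2(\DD_{**})}\le C\eps$, hence $\|u_\eps\|_{H^2(\DD_*)}\le C/\sqrt\eps$ and therefore $\|u_\eps\|_{C^0(\DD_*)}\le C/\sqrt\eps$.

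With this preliminary bound in hand I will differentiate (\ref{uPDE}) in $w_1$, using $\partial_1 F(w,\hat h_\eps)=F_1+F_3\,\partial_1\hat h_\eps$ and $\partial_1 F_3(w,\hat h_\eps)=F_{31}+F_{33}\,\partial_1\hat h_\eps$, and rewrite the result with the Euclidean Laplacian on $\DD_*$; schematically,
\[
\Delta_{\R^2}\partial_1 u_\eps = \eps\,\alpha(w,\hat h_\eps,\partial_1\hat h_\eps) + \eps^2\,\beta(w,\hat h_\eps,\partial_1\hat h_\eps)\,u_\eps + \eps^2\,\gamma(w,\hat h_\eps)\,\partial_1 u_\eps,
\]
with coefficients $\alpha,\beta,\gamma$ uniformly bounded on $\DD_*$ for all $\eps\in(0,1]$, by Corollary~\ref{cor:C0est} together with the smoothness of $F,F_1,F_3,F_{31},F_{33}$ and of $f_\eps$ and $\eps\,\partial_\eps f_\eps$. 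Taking $L^2(\DD_*)$ norms and inserting Corollary~\ref{cor:H1est} to control $\partial_1\hat h_\eps$, Lemma~\ref{demall} to control $u_\eps$ and $\partial_1 u_\eps$, and the preliminary $C^0$-bound on $u_\eps$ to handle the quadratic coupling $u_\eps\,\partial_1\hat h_\eps$, each contribution collapses to at most $C\eps+C\eps^{3/2}\le C\eps$. I expect the $\eps$-power bookkeeping---and specifically the need to promote $u_\eps$ to a pointwise bound \emph{before} estimating the quadratic cross-term---to be the only delicate point; everything else is a direct adaptation of the proof of Lemma~\ref{lem:H1der}.
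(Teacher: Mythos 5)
Your argument is correct and follows essentially the same route as the paper: differentiate (\ref{uPDE}) in $w_1$, write the result as $-\Delta_{\R^2}\cd_1 u_\eps+\eps G^{(1)}_\eps+\eps^2G^{(2)}_\eps u_\eps+\eps^2G^{(3)}_\eps\cd_1u_\eps=0$ with uniformly bounded coefficients, and apply Proposition~\ref{SEE}(i) together with Lemma~\ref{demall}. The only divergence is your preliminary $C^0$ bound on $u_\eps$, which is sound but not needed: since $\cd_1\hat h_\eps$ is already uniformly bounded pointwise (Corollary~\ref{cor:C0der}, on a suitably enlarged disk), the coefficient multiplying $u_\eps$ in the quadratic cross-term is $C^0$-bounded, and the $L^2$ bound $\|u_\eps\|_{L^2(\DD)}\leq C/\sqrt{\eps}$ from Lemma~\ref{demall} already gives $\|\eps^2G^{(2)}_\eps u_\eps\|_{L^2(\DD)}\leq C\eps^{3/2}$ --- this is how the paper proceeds.
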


\begin{proof} Differentiating the PDE (\ref{uPDE}) for $u_\eps$ with respect to $w_1$, and using Corollary \ref{cor:C0est}
we see that $\cd_1u_\eps$ satisfies an equation of the form
$$
-\Delta_{\R^2}u_\eps+\eps G^{(1)}_\eps+\eps^2 G^{(2)}_\eps u_\eps +\eps^2G^{(3)}_\eps \cd_1 u_\eps=0,
$$
where $G^{(k)}_\eps$ are continuous functions on $\DD$ satisfying uniform bounds $\|G^{(k)}_\eps\|_{C^0(\DD)}\leq C$. Hence, by
Proposition \ref{SEE}(i),
\ben
\|\cd_1u_\eps\|_{H^2(\DD')}&\leq&C(\|\eps G^{(1)}_\eps+\eps^2 G^{(2)}_\eps u_\eps+\eps^2 G^{(3)}_\eps\cd_1u_\eps\|_{L^2(\DD)}
+\|\cd_1 u_\eps\|_{L^2(\DD')}\|)\\
&\leq& C(\eps+\eps^2\|u_\eps\|_{L^2(\DD)}+\|\cd_1u_\eps\|_{L^2(\DD)})\\
&\leq& C(\eps+\|u_\eps\|_{H^1(\DD)})\\
&\leq& C(\eps+\|u_\eps\|_{H^1(S^2)})\\
&\leq& \frac{C}{\sqrt{\eps}}
\een
by Lemma \ref{demall}.
\end{proof}

We may now complete the proof of Theorem \ref{incompleteness}. It suffices to exhibit a curve of finite length that escapes every compact subset
of $M_{1,1}^{\PP^1}(S^2_R)$. We claim that the curve $q(\eps)$ of centred pairs, $0<\eps\leq 1$, is such a curve. Certainly, it escapes every compact subset. Its length, by (\ref{g0pairsS2}), is
$$
L=\int_0^1\sqrt{2\pi \left(  \frac{8R^2}{(1+\eps^2)^2} + \frac1\eps\frac{ {\d}\:}{{\rm d}\eps}(\eps b(\eps))\right)}\, {\rm d}\eps,
$$
where $\eps b(\eps)=\cd_1\hat h_\eps|_{w=1}-1$. Hence,
$$
L\leq 4R\sqrt{\pi}+\sqrt{2\pi}\int_0^1 \eps^{-1/2}|\cd_1 u_\eps(1)|^{1/2}\,{\rm d}\eps.
$$
By Lemma \ref{pardol} and the Sobolev inequality (Proposition \ref{SOB}),
$$
|\cd_1 u_\eps(1)|\leq \|\cd_1u_\eps\|_{C^0(\DD')}\leq C\|\cd_1u_\eps\|_{H^2(\DD')}\leq \frac{C}{\sqrt{\eps}}.
$$
Hence
$$
L\leq C+C\int_0^1\eps^{-3/4}\, {\rm d}\eps\leq C.
$$

\section{Geometry of the moduli spaces from GLSMs} \label{sec:GLSMs}\news

From now on, we will again suppose that  $\Sigma$ is any compact oriented surface, and its genus will be denoted by $g$.

Our present aim is to derive conjectural formulae  for the  $L^2$ volume and the total scalar curvature of the moduli spaces $\mathsf{M}_{(k_\tp,k_\tm)}^{\PP^1}(\Sigma)$ in all generality, by
employing an auxiliary Abelian GLSM with toric target $\CC^2$. This {idea was} briefly discussed in Section~4 of \cite{SchGLSM} for $\Sigma=\RR^2$, but one needs to be more careful when $\Sigma$ is
a compact surface.

The construction in~\cite{SchGLSM} extends a familiar setup~\cite{MorPle} to study (ungauged) sigma models
targeted in a K\"ahler quotient, relying on the physical interpretation of such a sigma model as a {low energy} effective theory for a parent GLSM.\, 
Taking this a little further, one may expect that certain quantities attached to the sigma model 
{can be} recovered as limits of corresponding quantities for the GLSM in the limit of large coupling constant
$e^2 \rightarrow \infty$. Elaborating on this argument, Baptista derived conjectural {formulae} for the $L^2$ volume and total scalar curvature of moduli spaces of compact holomorphic curves $\Sigma \rightarrow \PP^n$ in~\cite{BapL2}. His formulae have been rigorously justified in an infinite family of cases with $\Sigma=S^2$ in \cite{SpeS2Pk,Alq}, and {are consistent with
rigorous results on $L^2$ metrics for general $\Sigma$ \cite{Liu}}.

\subsection{The parent GLSM} \label{sec:GLSM}

In our context, the novelty with respect to Baptista's argument in~\cite{BapL2} is that one needs to arrange for some of the  gauge symmetry present in the GLSM to survive the large $e^2$ limit of the type mentioned above.
We implement this by introducing {two} independent coupling constants $e_1, e_2$ in a parent GLSM with structure group $\TT^2:={\rm U}(1)_1 \times {\rm U}(1)_2$;
the parameters $e_1, e_2$ will be thought of as (inverse) lengths of two fixed vectors of an orthogonal basis of ${\rm Lie}(T^2)^*\cong \RR^2$ with respect to a given invariant metric on $\TT^2$.
The gauged $\PP^1$-model will be obtained as a limit $e_1^2 \rightarrow \infty$ at constant $e^2_2=1$. In fact, the argument generalises to gauged nonlinear sigma-models with higher-dimensional and more complicated quotient targets~\cite{SchGLSM}, but we prefer to make our discussion as concrete as possible.

The parent model is constructed from a $\TT^2$-principal bundle $P\rightarrow \Sigma$, to which one associates a rank-2 vector bundle $P^{\CC^2}_{\rho} \rightarrow \Sigma$  via the representation $\rho$ with weights $\mathbf{Q}_\tpm=(Q_\tpm^1,Q_\tpm^2) \in {\rm Lie}(\TT^2)_\ZZ^*$
$$
\rho: \left((\lambda_1,\lambda_2),  (X_\tp,X_\tm) \right) \mapsto \left( \lambda_1^{Q_\tp^1} \lambda_2^{Q_\tp^2} X_\tp, \lambda_1^{Q_\tm^1} \lambda_2^{Q_\tm^2} X_\tm\right).
$$ 
Let us parametrise the moment maps $\mu: \CC^2 \rightarrow  {\rm Lie}(\TT^2)^*$ for this action with respect to the canonical symplectic structure of $\CC^2$ as
$$
\mu_j(X_\tp,X_\tm)= \frac{1}{2} \left(  Q^j_\tp |X_\tp|^2 + Q^j_\tm |X_\tm |^2-\tau_j \right)\qquad j=1,2
$$
for some $(\tau_1,\tau_2)\in \RR^2$. 
We equip the gauge group with a deformed Riemannian metric 
$$
g_{\TT^2}=e_1^{-2}\d\vartheta_1^2+e_2^{-2}\d\vartheta_2^2
$$
where $\lambda_i=:{\rm e}^{{\rm i}\vartheta_i}$.

{These choices determine a GLSM with energy functional (\ref{GSM}) on $\mathbb{T}^2$-connexions $A=(A_1,A_2)$ in $P\rightarrow \Sigma$ and sections $\phi=(\phi_\tp,\phi_\tm)$ of $P^{\C^2}_\rho \rightarrow \Sigma$,
$$
E_1=\frac12\int_{\Sigma}\left(\frac{|F_{A_1}|^2}{e_1^2}+\frac{|F_{A_2}|^2}{e_2^2}+|\d^{A}\phi_\tp|^2+
|\d^{A}\phi_\tm|^2+e_1^2\mu_1(\phi_\tp,\phi_\tm)^2+e_2^2\mu_2(\phi_\tp,\phi_\tm)^2\right)
$$
where $\d^A\phi_\tpm=(\d-{\rm i}Q_\tpm^1A_1-{\rm i}Q_\tpm^2A_2)\phi_\tpm$. 
The associated vortex equations are
\bea\label{vortlin1}
\bar\partial^A \phi_{\tpm}&=&0,\\
\label{vortlin2}
*F_{A_j} =-\mu_j^\sharp \circ \phi &=& \frac{e_j^2}{2}\left(\tau_j-Q^j_\tp|\phi_\tp|^2-Q_\tm^j |\phi_\tm|^2\right),  \qquad j=1,2.
\eea
}

Let us denote by $(k_{1},k_2)$ the components of ${\rm deg}\, P \in H^2(\Sigma;\ZZ)^{\oplus 2} \cong \ZZ^{\oplus 2}$ with respect to the top degree class ${\rm PD}[\{{\rm pt}\}]\in H^2(\Sigma;\ZZ)$.
We also set 
\begin{equation}\label{npm}
k_\tpm:=\sum_{j=1}^2Q_\tpm^j k_j,
\end{equation}
assumed from now on to be nonnegative. As the notation suggests, these integers will end up being matched with the topological invariants (\ref{npmreally}) of the gauged $\PP^1$-model on $\Sigma$.

Taking the Hodge dual to (\ref{vortlin2}) and integrating over $\Sigma$, we obtain the equations
\begin{equation*}\label{linBradlow}
\frac{2\pi k_j}{e_j^2} = \frac{\tau_j}{2}{\rm Vol}(\Sigma)  -  \frac{1}{2} \left(Q_\tp^j\|\phi_\tp \|_{L^2}^2 + Q_\tm^j \|\phi_\tm \|_{L^2}^2  \right),
\end{equation*}
from which the Bradlow bounds~\cite{Bra} appropriate to the parent GLSM can be inferred: they amount to the (necessary) demand that both square norms $\|\phi_\tpm\|^2_{L^2}$ be nonnegative.
Geometrically, this is equivalent to the following statement, whose proof now reduces to straightforward computation:

\begin{lemma} \label{Lem:lin}
A necessary condition for the vortex
equations (\ref{vortlin2}) to admit solutions
is that the vector $\mathbf{\tau}=(\tau_1,\tau_2)$, regarded as an element of ${\rm Lie} (\TT^2)^* \cong \RR^2$, be contained in the closed affine cone with vertex at
the point $\left( \frac{4\pi k_1}{ e_1^2{\rm Vol(\Sigma)}}, \frac{4\pi k_2}{ e_2^2{\rm Vol(\Sigma)}}\right)$ and generated by the two  independent vectors $\mathbf{Q}_+$ and $\mathbf{Q}_-$ in the weight
lattice.
\end{lemma}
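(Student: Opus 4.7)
The plan is to extract the Bradlow-type bounds already derived in the excerpt directly above the lemma statement and reinterpret them geometrically. The key observation is that the integrated form of the second vortex equation (\ref{vortlin2}) already gives, for each $j=1,2$,
\begin{equation*}
\tau_j = \frac{4\pi k_j}{e_j^2\,\mathrm{Vol}(\Sigma)} + \frac{Q_\tp^j\|\phi_\tp\|_{L^2}^2 + Q_\tm^j\|\phi_\tm\|_{L^2}^2}{\mathrm{Vol}(\Sigma)},
\end{equation*}
which we obtain by Hodge-dualising (\ref{vortlin2}) and integrating over $\Sigma$, using $\int_\Sigma F_{A_j} = 2\pi k_j$ from the degree computation for the $j$-th circle factor of $P$.

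First, I would simply rewrite this pair of scalar equations as a single vector identity in $\mathrm{Lie}(\TT^2)^*\cong\RR^2$:
\begin{equation*}
\tau \;=\; \left(\frac{4\pi k_1}{e_1^2\,\mathrm{Vol}(\Sigma)},\,\frac{4\pi k_2}{e_2^2\,\mathrm{Vol}(\Sigma)}\right) \;+\; \frac{\|\phi_\tp\|_{L^2}^2}{\mathrm{Vol}(\Sigma)}\,\mathbf{Q}_\tp \;+\; \frac{\|\phi_\tm\|_{L^2}^2}{\mathrm{Vol}(\Sigma)}\,\mathbf{Q}_\tm.
\end{equation*}
Next, I would observe that the two coefficients multiplying $\mathbf{Q}_\tpm$ are manifestly nonnegative, being $\mathrm{Vol}(\Sigma)^{-1}$ times a squared $L^2$-norm. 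Thus $\tau$ lies in the translate, by the vertex vector displayed above, of the closed convex cone $\mathrm{cone}(\mathbf{Q}_\tp,\mathbf{Q}_\tm)=\{s_\tp\mathbf{Q}_\tp+s_\tm\mathbf{Q}_\tm:s_\tpm\geq 0\}$. Because $\mathbf{Q}_\tp$ and $\mathbf{Q}_\tm$ are assumed linearly independent in the weight lattice (they are the weights of the two coordinate directions on $\CC^2$, otherwise the quotient geometry would degenerate), this cone has nonempty interior, so the statement of the lemma is recovered verbatim.

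There is essentially no obstacle here: the lemma is purely a rewriting of the integrated second vortex equation, packaged as a membership statement in an affine cone. The only points that warrant any care are (i) checking signs and the factor $\tfrac{1}{2}$ in the integration, which match the Bradlow identity already displayed in the text, and (ii) noting that $\mathbf{Q}_\tpm$ being independent is what justifies calling the resulting region a genuine $2$-dimensional cone rather than a half-plane or a ray. Both amount to inspection rather than any substantive analysis, so the proof reduces to one line of algebra followed by the remark that $\|\phi_\tpm\|_{L^2}^2\geq 0$.
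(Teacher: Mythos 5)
Your proposal is correct and is exactly the argument the paper intends: it integrates the Hodge dual of (\ref{vortlin2}) to get the displayed Bradlow identity, and the lemma is the geometric restatement of requiring $\|\phi_\tpm\|_{L^2}^2\geq 0$ in that identity. The paper itself only remarks that the "proof now reduces to straightforward computation," and your one-line vector rewriting is precisely that computation.
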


Provided $k_\tpm$ are sufficiently large in comparison with the genus of $\Sigma$ and the inequalities of Lemma \ref{Lem:lin} are satisfied strictly, the moduli space of solutions of the vortex
equations of this model is nonempty, and it admits a simple description in terms of the zero sets
of $\phi_\tpm$.

\begin{theorem} \label{prop:modulispaceC2}
Suppose that 
\beq\label{nposstable}
k_\tp\geq k_\tm>\max\{2g-2,0\}
\eeq
and that $(e_1,e_2)$ and $(\Sigma,g_\Sigma)$ are such that the point $(\tau_1,\tau_2)$ is in the interior of the cone with vertex at 
$\left( \frac{4\pi k_1}{ e_1^2{\rm Vol(\Sigma)}}, \frac{4\pi k_2}{ e_2^2{\rm Vol(\Sigma)}}\right)$ and generated by the
vectors $\mathbf{Q}_+$ and $\mathbf{Q}_-$. 
Then for each pair of effective divisors $(D_\tp,D_\tm)$ of degrees $k_\tp$, $k_\tm$ respectively,
there exists a unique gauge equivalence class of solutions of (\ref{vortlin1}) and (\ref{vortlin2}) with $(\phi_{\tpm})=D_\tpm$. Hence the moduli space
of vortices associated to the GLSM introduced above admits the description
\begin{equation} \label{productsyms}
\mathsf{M}_{(k_\tp,k_\tm)}^{\CC^2, e}(\Sigma)\cong{\rm Sym}^{k_\tp}(\Sigma) \times {\rm Sym}^{k_\tm}(\Sigma).
\end{equation}
\end{theorem}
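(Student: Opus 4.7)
The plan is to follow closely the strategy of the proof sketch of Theorem~\ref{thmmodulispace}, adapting the Hitchin--Kobayashi / GIT quotient framework to the linear target $\CC^2$. First I would view the infinite-dimensional manifold $\mathcal{A}(P) \times \Gamma(\Sigma, P^{\CC^2}_\rho)$ as K\"ahler, with symplectic structure inherited from the scaled bi-invariant metric $g_{\TT^2}$ and the standard Hermitian metric on $\CC^2$. The holomorphicity condition (\ref{vortlin1}) cuts out a complex-analytic subvariety invariant under the complexified gauge group $\mathcal{G}(P)^\CC = C^\infty(\Sigma,(\CC^*)^2)$, while the second set of vortex equations (\ref{vortlin2}) expresses the vanishing condition for the moment map of the Hamiltonian $\mathcal{G}(P)$-action.

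Next I would verify the hypotheses of Mundet's Hitchin--Kobayashi correspondence (Theorem~2.19 of~\cite{MunHK}) in this Abelian setting. The strict version of the inequalities in Lemma~\ref{Lem:lin} --- i.e., the assumption that $(\tau_1,\tau_2)$ lies in the \emph{interior} of the shifted cone --- guarantees that every pair $(A,\phi)$ with $\bar\partial^A\phi_\tpm=0$ and vanishing moment map has $\phi_\tp, \phi_\tm \not\equiv 0$, hence is simple in the sense of \cite[Def.~2.17]{MunHK}. Stability, which for an Abelian structure group is equivalent to suitable inequalities of degree on $\mathcal{G}(P)^\CC$-invariant subsheaves of the pair of holomorphic line bundles carrying $\phi_\tpm$, follows from the same strict cone hypothesis along the lines of~\cite{Bra,BapVA}. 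Mundet's theorem then yields a bijection between $\M^{\CC^2,e}_{(k_\tp,k_\tm)}(\Sigma)$ and the GIT-type quotient of solutions of (\ref{vortlin1}) with both components nonvanishing, modulo $\mathcal{G}(P)^\CC$.

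I would then identify this complex gauge quotient with $\Sym^{k_\tp}(\Sigma) \times \Sym^{k_\tm}(\Sigma)$ via the divisor map $[(A,\phi)] \mapsto ((\phi_\tp),(\phi_\tm))$. The linear independence of $\mathbf{Q}_\tp$ and $\mathbf{Q}_\tm$ ensures that the induced morphism
$$
M_*: \mathrm{Pic}^{k_1}(\Sigma) \times \mathrm{Pic}^{k_2}(\Sigma) \longrightarrow \mathrm{Pic}^{k_\tp}(\Sigma) \times \mathrm{Pic}^{k_\tm}(\Sigma)
$$
is an isogeny of torsors over the Jacobian, so every pair of holomorphic line bundles of the correct degrees arises in the form $(L_1^{Q_\tp^1}\otimes L_2^{Q_\tp^2},\, L_1^{Q_\tm^1}\otimes L_2^{Q_\tm^2})$. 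The hypothesis $k_\tp \geq k_\tm > 2g-2$ forces $H^1(\Sigma,\mathcal{O}(D_\tpm)) = 0$ by Riemann--Roch, so effective divisors of degrees $k_\tpm$ correspond bijectively to non-zero holomorphic sections up to scalar; the residual $\CC^* \times \CC^*$ rescaling ambiguity is absorbed by constant complex gauge transformations, using linear independence of $\mathbf{Q}_\tpm$ once more to ensure that their action on these rescalings is surjective.

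The main technical obstacle will be Mundet's stability condition, since simplicity is essentially immediate from the strict Bradlow hypothesis, whereas stability requires checking an inequality of degree on every $\mathcal{G}(P)^\CC$-invariant subsheaf. For the Abelian structure group $\TT^2$, however, such invariant subsheaves have a particularly simple form (each is supported on an eigenspace of the torus action), and the resulting stability inequality reduces exactly to the strict cone hypothesis of Lemma~\ref{Lem:lin}; this reduction is essentially the one carried out in~\cite{Bra} for the Abelian Higgs model and extended in~\cite{BapVA} to multi-component targets.
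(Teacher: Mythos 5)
Your proposal is correct in outline, but it takes a genuinely different route from the paper. The paper does not rerun the Hitchin--Kobayashi machinery for the linear model: it simply checks that the interior-of-cone hypothesis and (\ref{nposstable}) give Assumptions~4.1 and~4.2 of \cite{BapL2}, and then invokes Theorem~4.3 of that reference (which packages the results of \cite{MorPle} and \cite{Weh} for Abelian GLSMs) to obtain the GIT description $\mathcal{V}^{\rm ss}/(\TT^2)^{\CC}$ directly. It then organises $\mathcal{V}$ as a product of Picard vector bundles $\mathcal{V}_{\tpm}\ra{\rm Pic}^{k_\tpm}(\Sigma)$ of constant rank $k_\tpm-g+1$ --- this is where (\ref{nposstable}) enters, ruling out special divisors --- so that the quotient is $\PP(\mathcal{V}_\tp)\times\PP(\mathcal{V}_\tm)\cong\Sym^{k_\tp}(\Sigma)\times\Sym^{k_\tm}(\Sigma)$ by the classical identification of projectivised spaces of sections with symmetric products. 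You instead mimic the sketch proof of Theorem~\ref{thmmodulispace}: verify simplicity and stability by hand from the strict cone hypothesis (correctly identified with strict positivity of $\|\phi_\tpm\|_{L^2}^2$), appeal to Mundet's correspondence \cite{MunHK}, and then pass to symmetric products via the divisor map and the induced map on Picard varieties. Both are legitimate; the paper's citation is shorter and delivers the smooth projective-bundle structure for free, while your route is more self-contained, parallels the nonlinear case already treated in Section~\ref{sec:P1model}, and makes explicit where each hypothesis is used.

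One caveat, which your explicit mention of the isogeny actually brings to the surface: the map ${\rm Pic}^{k_1}(\Sigma)\times{\rm Pic}^{k_2}(\Sigma)\ra{\rm Pic}^{k_\tp}(\Sigma)\times{\rm Pic}^{k_\tm}(\Sigma)$ determined by the weight matrix $(Q_\tpm^j)$ is always surjective (so existence holds), but it is injective only when $\det(Q_\tpm^j)=\pm1$ or $g=0$. For a non-unimodular weight matrix and $g\geq 1$ its kernel is a nontrivial finite subgroup, and distinct elements of that kernel yield complex-gauge-inequivalent solutions carrying the same pair of divisors, so the uniqueness claim as literally stated would fail. This does not affect the paper's application (the weights chosen in Section~\ref{sec:shrink} are unimodular) and the paper's own proof glosses over the same point, but if you intend your argument to cover arbitrary independent weights you should either add a unimodularity hypothesis or rephrase the conclusion; your appeal to ``linear independence of $\mathbf{Q}_\tpm$'' alone does not settle it.
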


\begin{proof}
We will start by applying Theorem 4.3 of reference~\cite{BapL2}, which encapsulates results from \cite{MorPle} and \cite{Weh}. First note that
the condition imposed on $(\tau_1,\tau_2)$ and the inequalities (\ref{nposstable})
imply that Assumptions (i) 4.1 and (ii) 4.2 of \cite{BapL2}, respectively, are satisfied.
The theorem then establishes that the moduli space admits a GIT-quotient description
\begin{equation} \label{MasGIT}
\mathsf{M}_{(k_+,k_-)}^{\CC^2, e}(\Sigma)\cong(\mathcal{V}^{\rm ss})/(\TT^2)^{\CC}.
\end{equation}
The right-hand side refers to the action $\rho^\CC$ of the complexification of the torus $\TT^2$ serving as structure group of the GSLM on the space $\mathcal V$ of solutions of the equations (\ref{vortlin1}) with topological charges $k_\pm$, and the semistable set $\mathcal{V}^{\rm ss} \subset \mathcal{V}$ consists of nonzero solutions.
More precisely: a $\TT^2$-connexion in the principal $\TT^2$ bundle $P\rightarrow \Sigma$ with degree ${\bf k}=(k_1,k_2)$ equips the associated rank two vector bundle $P^{\CC^2} \rightarrow \Sigma$ with a holomorphic structure, while the splitting $P^{\CC^2}\cong L_\tp\oplus L_\tm$ into degree $k_\tpm$ line bundles $L_\tpm$ yields a decomposition  $\phi=(\phi_\tp,\phi_\tm)$ of a holomorphic section into sections $\phi_\tpm$ of $L_\pm$ with induced holomorphic structures. 

We end up with a description of the space of solutions $\mathcal V$ entering (\ref{MasGIT}) as a product $\mathcal{V}=\mathcal{V}_\tp \times \mathcal{V}_\tm$ of two vector bundles
$$
\mathcal{V}_\pm \rightarrow  {\rm Pic}^{k_\tpm}(\Sigma) 
$$
over the Picard groups parametrising holomorphic line bundles over $(\Sigma,j_\Sigma)$ with degrees $k_\pm$, whose fibres 
$H^0(\Sigma, L_\tpm)$ are spaces of holomorphic sections with respect to holomorphic structures varying over the base.
Note that the dimension of these fibres is constant by virtue of the hypothesis (\ref{nposstable}), ensuring that there are no special divisors in degree $k_\tpm$ in $\Sigma$, and so it is determined by the theorem of Riemann--Roch to be
$$
{\rm rk}\, \mathcal{V}_\pm = \dim H^0(\Sigma, L_\tpm) =  k_\tpm-g+1.
$$

The  ``ss'' superscript in (\ref{MasGIT}) instructs us to remove the zero sections before taking the quotient by each factor $\CC^*$, and in this way we obtain the product of projectivisations
$$
\mathsf{M}_{(k_+,k_-)}^{\CC^2,{e}}(\Sigma)\cong \PP(\mathcal{V}_\tp) \times \PP(\mathcal{V}_\tm).
$$
The identification of $\PP(\mathcal{V}_\tpm)$ with ${\rm Sym}^{k_\tpm}(\Sigma)=\Sigma^{k_\tpm}/\mathfrak{S}_{k_\tpm}$, taking each point $[\phi_\tpm] \in \PP H^0(\Sigma, L_\tpm)$ on a given fibre to the effective divisor of zeros $(\phi_\tpm)=(\phi_\tpm)_0$,  is a construction
familiar from the geometry of algebraic curves (see~\cite{ACGH}, p.~18).
\end{proof}

\subsection{The gauged $\PP^1$ model as a limit at shrinking ${\rm U}(1)_1$} \label{sec:shrink}

To make contact with the gauged $\PP^1$ model we are primarily interested in, we will now specialise the parent model introduced in the previous section to the choices $\mathbf{Q}_+=(1,1)$, $\mathbf{Q}_-=(1,0)$, $(\tau_1,\tau_2)=(4,2-2\tau)$ and $(e_1,e_2)=(e,1)$. Thus we have
$$
k_\tp=k_1+k_2, \qquad k_\tm=k_1
$$
and the necessary conditions of Lemma~\ref{Lem:lin} become
$$
-(1+\tau)+\frac{2\pi k_\tm}{e^2\Vol(\Sigma)}\leq \frac{2\pi(k_+-k_-)}{\Vol(\Sigma)}
\leq 1-\tau.
$$
We will think of the parameter $\tau$ as being fixed, but keep the coupling $e$ as a free parameter --- in this way we are considering a one-parameter family of GLSMs, whose K\"ahler moduli spaces will be
denoted by $\M^{\C^2,e}_{(k_\tp,k_\tm)}$ for emphasis. The limit $e\rightarrow \infty$ corresponds to shrinking the first factor ${\rm U}(1)_1$ of $\TT^2$.

From Theorem \ref{thmmodulispace} and Theorem \ref{prop:modulispaceC2} we see that vortices in both the $\PP^1$ model and the GLSM (with $e$ sufficiently large) are in one-to-one correspondence with {\em pairs} of
effective divisors $D_\tpm$ on $\Sigma$. These divisors are unconstrained in the GLSM, but must be 
disjoint in the $\PP^1$ model. Hence we have a natural injective map
$$
\iota_e:\M^{\PP^1}_{(k_\tp,k_\tm)}\ra \M^{\CC^2, e}_{(k_\tp,k_\tm)},
$$
namely the map which sends a $\PP^1$ vortex with effective divisors $(D_\tp,D_\tm)$ of zeros and poles, respectively, to the GLSM vortex
(in the model with electric charge $e$) with the same divisors of zeros. Note that its range is a dense open subset of $\M^{\CC^2, e}_{(k_\tp,k_\tm)}$. Both these moduli spaces have $L^2$ metrics, which we denote
$g_{L^2}$ and $g_{L^2}^e$. Our interest in the particular parametric family
of GLSM's defined above is explained by the following conjecture.

\begin{conjecture}\label{blaski}
 The family of metrics $\iota_e^*g_{L^2}^e$ on $\M^{\PP^1}_{(k_\tp,k_\tm)}$
converges uniformly to $g_{L^2}$ as \mbox{$e\ra\infty$.}
\end{conjecture}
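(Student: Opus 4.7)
The plan is to combine two ingredients: the picture of the $e\to\infty$ limit as a K\"ahler quotient $\CC^2/\!/{\rm U}(1)_1\cong\PP^1$, and the Strachan--Samols localization (\ref{metriceasy}), which reduces metric comparison to comparison of local Taylor coefficients of the associated Taubes functions at the vortex positions. First I would recast the GLSM vortex equations in gauge-invariant form on the complement of $D_\tp\cup D_\tm$: solving (\ref{vortlin1}) as in (\ref{solvevort1}) shows that $A_1$ is determined by $\phi_\tm$ alone and $A_2$ by the ratio $u:=\phi_\tp/\phi_\tm$. Setting $h^e_\pm:=\log|\phi_\pm|^2$ and $H^e:=\log|u|^2$, substituting into (\ref{vortlin2}) and adding Poincar\'e--Lelong sources yields the coupled system
\begin{align*}
\nabla^2 h^e_\tm - e^2(e^{h^e_\tp}+e^{h^e_\tm}-4)&=4\pi\,\delta_{D_\tm},\\
\nabla^2 H^e - e^{h^e_\tp}+(2-2\tau)&=4\pi(\delta_{D_\tp}-\delta_{D_\tm})
\end{align*}
(written with respect to the Euclidean Laplacian; the conformal factor for general $\Sigma$ is handled as at the end of Section~\ref{sec:meroStrSam}). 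A direct computation using $e^{h^e_\tp}=4e^{H^e}/(1+e^{H^e})+O(e^{-2})=2-2\mathbf{n}\cdot\mathbf{u}(H^e)+O(e^{-2})$ shows that the second equation reduces, as $e\to\infty$, to the gauged $\PP^1$ Taubes equation for divisor data $(D_\tp,D_\tm)$.

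The second step is to upgrade this formal limit to uniform convergence of solutions on compact subsets $K\subset\M^{\PP^1}_{(k_\tp,k_\tm)}$. Compactness of $K$ yields a uniform neighbourhood $U_\delta$ of the divisor supports. On $\Sigma\setminus U_\delta$, a maximum-principle comparison applied to the first Taubes equation gives the pointwise bound $|e^{h^e_\tp}+e^{h^e_\tm}-4|\le C(K)/e^2$. Standard elliptic regularity applied to the perturbed second equation then produces $C^\infty_{\rm loc}$ convergence of $H^e$ to the unique gauged $\PP^1$ Taubes solution $H$ associated to $(D_\tp,D_\tm)$, with rate $O(e^{-2})$ uniform in $(D_\tp,D_\tm)\in K$ and in compact subsets of $\Sigma\setminus U_\delta$. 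Since the Jaffe--Taubes factorization (\ref{TaubesFact}) holds for every $e$, the Strachan--Samols coefficients $b^{\tpm}_s$ of (\ref{marcar}) depend continuously (in $C^2$) on the regular part of $H^e$ and converge uniformly in $K$. Repeating the localization argument of Section~\ref{sec:meroStrSam} in the GLSM yields a formula for $g_{L^2}^e$ of the same shape as (\ref{metriceasy}), with coefficients agreeing with the $\PP^1$ ones up to $O(e^{-2})$ contributions from tangent directions transverse to the constraint $\mu_1=0$; combining this with the coefficient convergence gives the uniform limit $\iota_e^*g^e_{L^2}\to g_{L^2}$ on $K$.

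The principal obstacle is the uniform control of the constraint violation $|e^{h^e_\tp}+e^{h^e_\tm}-4|$ in a full neighbourhood of the cores, where the standard maximum-principle barriers deteriorate because $h^e_\pm$ acquire logarithmic singularities. A natural remedy is to build local barriers from the explicit singular structure (\ref{TaubesFact}), which is $e$-independent, and combine them with energy estimates on annular regions to transport the estimate across the core; alternatively, a $\Gamma$-convergence approach, treating the $e\to\infty$ limit of the GLSM energy functional in parallel with the linear-to-nonlinear sigma model passage studied in \cite{BapL2, SpeS2Pk, Alq}, would convert the PDE problem into convergence of constrained energy minimizers and bypass pointwise core estimates altogether. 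Either route must also provide the corresponding tangent-space control needed to complete the metric comparison.
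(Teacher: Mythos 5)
This statement is Conjecture~\ref{blaski}: the paper does not prove it. What the paper offers (Section~\ref{sec:shrink}) is a heuristic --- the forgetful map $T$ is formally an $L^2$ Riemannian submersion, and the GLSM vortex $(A^e,\phi^e)$ pushes forward under $T$ to an approximate solution of the $\PP^1$ vortex equations with $O(e^{-2})$ errors, so one expects the pulled-back metrics to converge. Your proposal attempts an actual proof along a more analytic route (coupled Taubes-type equations, elliptic estimates, Strachan--Samols coefficients), which is essentially the strategy Liu \cite{Liu} used to prove the analogous conjecture of Baptista for the ungauged model. Your derivation of the coupled system for $(h^e_\tm,H^e)$ and of its formal $e\to\infty$ reduction to the gauged $\PP^1$ Taubes equation is correct.

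However, the proposal is a program rather than a proof, and the gaps sit exactly where the difficulty lies; three of them should be named precisely. First, the localization formula (\ref{metriceasy}) involves the \emph{moduli derivatives} $\partial B_s/\partial Z_r$, not the coefficients $B_s$ themselves. Hence $C^\infty_{\rm loc}$ convergence of $H^e$ to $H$ at fixed divisor data, even with a rate uniform over a compact set $K$ of configurations, does not by itself yield convergence of the metric: you must differentiate the entire family of PDEs with respect to the divisor positions and control those derivatives uniformly in $e$, an additional layer of linearized estimates the proposal does not set up. Second, the claim that the GLSM localization produces a formula of the same shape as (\ref{metriceasy}) with only $O(e^{-2})$ corrections is asserted rather than derived: the GLSM tangent space is cut out by the linearization of \emph{both} equations in (\ref{vortlin2}) together with the $\TT^2$ Coulomb gauge condition, and one must show that the component of a GLSM tangent vector transverse to the constraint $\mu_1=0$ has $L^2$ norm $O(e^{-2})$ uniformly --- precisely the tangent-space control you defer. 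Third, the conjecture asserts uniform convergence on all of $\M^{\PP^1}_{(k_\tp,k_\tm)}$, which is noncompact: the divisors $D_\tp$ and $D_\tm$ may approach one another, your constants $C(K)$ degenerate there, and a proof restricted to compact subsets establishes only locally uniform convergence, which is strictly weaker than the statement (and insufficient, on its own, for the volume identities the paper wants to draw from it). None of this makes your approach wrong --- it is probably the right road --- but each of these gaps must be closed before the argument constitutes a proof.
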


If true, this has strong consequences for the geometry of $\M^{\PP^1}_{(k_\tp,k_\tm)}$, as we will
see.
Since the range of $\iota_e$ is dense, the volume of $(\M^{\PP^1}_{(k_\tp,k_\tm)},\iota_e^*g_{L^2}^e)$
equals the volume of $(\M^{\CC^2,e}_{(k_\tp,k_\tm)},g_{L^2}^e)$, which is finite (since $\M^{\CC^2, e}_{(k_\tp,k_\tm)}$ is compact) and can be computed exactly via cohomological arguments. Taking the limit $e\ra\infty$ one
deduces that (subject to Conjecture \ref{blaski}) $(\M^{\PP^1}_{(k_\tp,k_\tm)},g_{L^2})$ also has
finite volume, and obtains an explicit formula for this. Physical consequences for the
thermodynamics of a vortex gas then follow.

Conjecture \ref{blaski} is motivated as follows\footnote{We note that a similar conjecture in the case of the {\em ungauged} $\P^1$ model and a GLSM with gauge group ${\rm U}(1)$ was made by Baptista \cite{BapL2}  and subsequently proved by Liu \cite{Liu} (albeit with a somewhat different notion of convergence).}. 
Recall that the GLSM is based on a principal $\TT^2$-bundle $P$ of degree $(k_1,k_2)=(k_\tm,k_\tp-k_\tm)$ and that $P^{\C^2}$ denotes the associated rank 2 vector bundle.
Denote by $P_2$ the principal ${\rm U}(1)$-bundle of degree $k_2$ associated with the second factor of $P$ and $P_2^{\PP^1}$ the $\PP^1$ bundle associated to this via the usual ${\rm U}(1)$-action on $\PP^1$. In a local trivialization of $P$ define the forgetful map
$$
T:((A_1,A_2),(\phi_\tp,\phi_\tm))\mapsto(A_2,[\phi_\tp:\phi_\tm])
$$
which takes a (locally defined) connexion and section on $P^{\C^2}$ and produces a (locally defined)
connexion and section on $P_2^{\PP^1}$. This map is, in fact, independent of the choice of trivialization, and hence globalizes to produce a map
$$
T:\AA(P)\times\Gamma(P^{\C^2})\ra \AA(P_2)\times\Gamma(P_2^{\PP^1})
$$
which is formally an $L^2$ Riemannian submersion (its differential is an $L^2$ isometry on the orthogonal complement to its kernel).

Now fix a disjoint pair of effective divisors $(D_\tp,D_\tm)$ and consider the (unique up to gauge) one-parameter
family of GLSM vortices $((A_1^e,A_2^e),(\phi_\tp^e,\phi_\tm^e))$ with $(\phi^e_\tpm) = D_\tpm$, for $e$ sufficiently large. Equation (\ref{vortlin1}) immediately implies that $T(A^e,\phi^e)$ satisfies the $\P^1$ vortex equation (\ref{vort1P1}), that is
$$
\ol\cd^{A^e_2}[\phi_\tp^e:\phi_\tm^e]=0.
$$
Furthermore, (\ref{vortlin2}) with $j=1$ suggests that
$$
|\phi_\tp^e|^2+|\phi_\tm^e|^2=4+O(e^{-2}).
$$ 
Substituting this in (\ref{vortlin2}) with $j=2$ yields
\ben
*F_{A_2}&=&1-\tau-\frac{2|\phi_\tp^e|^2}{|\phi_\tp^e|^2+|\phi_\tm^e|^2}+O(e^{-2})\\
&=&\frac{|\phi_\tm^e|^2-|\phi_\tp^e|^2}{|\phi_\tm^e|^2+|\phi_\tm^e|^2}-\tau+O(e^{-2})\\
&=& u_3-\tau+O(e^{-2})
\een
where we have used the usual identification $\PP^1\cong S^2$ given by $[X_\tp:X_\tm]\mapsto
X_\tp/X_\tm$ and inverse stereographic projection from $(0,0,-1)$. This suggests that $T(A^e,\phi^e)$ also
solves the second $\PP^1$ vortex equation (\ref{vort2P1}) approximately, up to errors of order $e^{-2}$, and hence that $T(A^e,\phi^e)$ is a good approximation, at large $e$, to the $\P^1$ vortex with
divisor pair $(D_\tp,D_\tm)$. Recalling that $T$ is formally Riemannian,  it follows that lengths of tangent vectors to $\M^{\P^1}_{(k_\tp,k_\tm)}$
should be well approximated by lengths of tangent vectors to $\M^{\C^2,e}_{(k_\tp,k_\tm)}$ corresponding to
the same infinitesimal motion of divisors, and hence that, at large $e$, $g_{L^2}$ should be
close to $\iota_e^* g_{L^2}^e$, the approximation becoming arbitrarily close as $e\ra\infty$. This, in more informal terms, is what Conjecture \ref{blaski} means.

\subsection{Integral formulae for the gauged $\PP^1$ model}

Since the spaces $\mathsf{M}_{(k_\tp,k_\tm)}^{\CC^2,e}(\Sigma)$ are compact complex manifolds and come equipped with an  $L^2$ metric which is K\"ahler~\cite{BapL2}, one may associate a K\"ahler class
$$[\omega{^e_{L^2}}] \in H^2\left(\mathsf{M}_{(k_\tp,k_\tm)}^{\CC^2,e}(\Sigma);\RR\right)$$ to them. 
This K\"ahler class can be expressed in terms of standard 2-cohomology classes of symmetric products of $\Sigma$, whose definition we recall.

If $g>0$, the intersection form $\sharp$ on $H_1(\Sigma;\Z)\cong\Z^{2g}$ extends bilinearly to a symplectic form on $H_1(\Sigma;\ZZ)\otimes \RR$, and we may choose generators $\hat\alpha_1,\hat\alpha_2,\ldots,\hat\alpha_{2g}$ for $H_1(\Sigma;\Z)$ which form a symplectic basis for this symplectic vector space,
meaning
$$
\left(\sharp(\hat\alpha_i,\hat\alpha_j)\right)_{i,j=1}^{2g}=\left(\begin{array}{cc} 0 & \I_g \\ -\I_g & 0 \end{array}\right).
$$
Let $\alpha_i$ be the Poincar\'e dual of $\hat\alpha_i$. Then the 
cohomology ring of $\Sigma$ is generated by $\alpha_1,\alpha_2,\ldots,\alpha_{2g}$, which satisfy the relations
$$
\alpha_i \alpha_j=0\text{ for }i\ne j\pm g\qquad\text{and}\qquad \alpha_{i}\alpha_{i+g}=-\alpha_{i+g}\alpha_i = \beta\text{ for } 1\le i \le g,
$$
where $\beta \in H^2(\Sigma;\ZZ)$ is the fundamental class. If $g=0$, then $\beta$ is the only generator of the cohomology ring. For $k\ge 1$, the K\"unneth
formula allows one to describe the cohomology of the Cartesian product $\Sigma^k$ in terms of tensor products of the pullbacks
$\alpha_{i,l}:=\pi_l^*\alpha_i$ and $\beta_l:=\pi_l^*\beta$ under the projections $\pi_l:\Sigma^k\rightarrow \Sigma$ for $1\le l \le k$ onto each factor, whereas a result of Macdonald establishes that the cohomology of the symmetric product ${\rm Sym}^k(\Sigma)=\Sigma^k/\mathfrak{S}_k$ in each degree $0\le j \le 2k$ consists of the $\mathfrak{S}_k$-invariant part
$$
H^j({\rm Sym}^k(\Sigma);\ZZ)=H^j(\Sigma^k;\ZZ)^{\mathfrak{S}_k};
$$
this follows from combining the assertions (4.1) and (12.1) in \cite{Mac}.
To construct presentations (see \cite{Mac}, \cite{BerTha}) for the cohomology ring of ${\rm Sym}^k(\Sigma)$, the following invariant elements are useful:
\begin{eqnarray*}
\eta & := & \sum_{l=1}^k \beta_l,\\
\xi_i & := &\sum_{l=1}^k \alpha_{i,l}\qquad \text{for }{1\le i \le 2g} \text{ if } g>0,\\
\sigma_i& := &\xi_i\xi_{i+g} \qquad \text{for }{1\le i \le g} \text{ if } g>0.
\end{eqnarray*}
In addition, the sum
$$
\theta:=\sum_{i=1}^g \sigma_i
$$
is quite natural: it corresponds to the pullback of the theta class
\begin{equation}\label{Theta}
\Theta=\sum_{i=1}^g[{\rm d}x_i \wedge {\rm d}x_{i+g}]
\end{equation}
on the Jacobian variety of $\Sigma$ (see Proposition (2.1) in \cite{BerTha})
under the Abel--Jacobi map 
$${\rm AJ}_k: {\rm Sym}^k(\Sigma)\rightarrow {\rm Pic}^k(\Sigma) \cong {\rm Jac}(\Sigma):=\frac{H^0(\Sigma,K_\Sigma)^*}{H_1(\Sigma;\ZZ)}\cong\frac{H_1(\Sigma;\ZZ)\otimes \RR}{H_1(\Sigma;\ZZ)}.$$
In (\ref{Theta}) we use $x_i$ to denote Cartesian coordinates on $H_1(\Sigma;\ZZ)\otimes \RR \cong \RR^{2g}$ associated to the symplectic basis
of $H_1(\Sigma;\ZZ)\otimes\R$ that we fixed at the beginning. In particular, one has that
\begin{equation} \label{thetag}
\theta^j=0 \quad \text{for }j>g.
\end{equation}

It is a well-known consequence (cf.\ proof of Corollary (4.3) in \cite{BerTha}) of the Poincar\'e formula (see p.~25 of \cite{ACGH}) that the classes $\eta$ and $\theta$ satisfy
\begin{equation} \label{binomialSym}
\int_{\Sym^{k}(\Sigma)} \eta^{k-j} \theta^j = \frac{g!}{(g-j)!} \qquad\text{ for } \quad 0\le j \le g. 
\end{equation}
This formula admits the following generalisation:

\begin{lemma}
For $0\le l \le g$ and $0\le j \le {\rm min} \{ g-l, k-l \}$, one has
\begin{equation} \label{genPoinc}
\int_{{\rm Sym}^k(\Sigma)} \eta^{k-j-l} \theta^j  \sigma_{i_1} \ldots \sigma_{i_l} = \frac{(g-l)!}{(g-j-l)!} 
\end{equation}
whenever $i_1,\ldots,i_l$ are any $l$ distinct indices from the set $\{1,2,\ldots,g\}$. 
\end{lemma}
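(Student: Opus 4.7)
The strategy is to reduce (\ref{genPoinc}) to the base case (\ref{binomialSym}) by expanding $\theta^j$ in the $\sigma_i$'s and exploiting a symmetry of the integrand.

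First I would record the elementary algebraic identities satisfied by the generators appearing in the statement. Since $\xi_i = \sum_l \alpha_{i,l}$ is a sum of pullbacks of $1$-forms from distinct factors of $\Sigma^k$, the relations $\alpha_{i,l}^2=0$ and $\alpha_{i,l}\alpha_{i,m} + \alpha_{i,m}\alpha_{i,l}=0$ yield $\xi_i^2=0$, and hence $\sigma_i^2 = (\xi_i\xi_{i+g})^2 = 0$. Being of even degree, the $\sigma_i$'s commute with each other, so
$$
\theta^j \;=\; (\sigma_1+\cdots+\sigma_g)^j \;=\; j!\sum_{\substack{S\subseteq\{1,\ldots,g\}\\ |S|=j}}\prod_{i\in S}\sigma_i.
$$

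Next I would argue that the quantity
$$
I(m) \;:=\; \int_{\Sym^k(\Sigma)} \eta^{k-m}\,\sigma_{i_1}\cdots\sigma_{i_m}
$$
depends only on $m$, for any choice of $m$ distinct indices with $0\le m\le \min\{g,k\}$. Any permutation $\pi\in\mathfrak{S}_g$ extends to a symplectic automorphism of $H_1(\Sigma;\ZZ)$ by acting on the pairs $(\hat\alpha_i,\hat\alpha_{i+g})$, and the induced action on $H^*(\Sym^k(\Sigma);\RR)$ fixes $\eta$, permutes $\{\sigma_1,\ldots,\sigma_g\}$, and preserves integration over $\Sym^k(\Sigma)$. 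Combining this invariance with the base case (\ref{binomialSym}) taken at $j=m$ gives
$$
\frac{g!}{(g-m)!} \;=\; \int_{\Sym^k(\Sigma)} \eta^{k-m}\,\theta^m \;=\; m!\binom{g}{m}\,I(m),
$$
whence $I(m)=1$.

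Finally, substituting the expansion of $\theta^j$ into the left-hand side of (\ref{genPoinc}), the vanishing $\sigma_i^2=0$ restricts the sum to subsets $S$ of $\{1,\ldots,g\}\setminus\{i_1,\ldots,i_l\}$ of size $j$, of which there are $\binom{g-l}{j}$; each surviving term contributes $I(j+l)=1$. The left-hand side thus equals $j!\binom{g-l}{j} = (g-l)!/(g-j-l)!$, as claimed. The main obstacle is the symmetry argument used in the middle step: although intuitively obvious, it rests on the fact that the labelling of a symplectic basis of $H_1(\Sigma;\ZZ)$ is a matter of convention, so that any permutation of the pairs extends to an automorphism of the cohomology ring fixing $\eta$ and the fundamental class of $\Sym^k(\Sigma)$. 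The cleanest rigorous justification proceeds by pulling back along the $k!$-to-one quotient map $\Sigma^k\to\Sym^k(\Sigma)$ and invoking the K\"unneth formula, where invariance under relabelling of symplectic pairs is manifest from the bilinearity and antisymmetry of the intersection pairing on $H_1(\Sigma;\ZZ)$.
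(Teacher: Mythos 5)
Your proof is correct and uses essentially the same ingredients as the paper's: the observation that the integral is independent of the choice of distinct indices (by relabelling the symplectic basis), the nilpotency of the $\sigma_i$, and the Poincar\'e-formula base case (\ref{binomialSym}). The only cosmetic difference is that you first reduce everything to the monomial integrals $I(m)=1$ and then expand $\theta^j$, whereas the paper averages the target integral directly over the $\binom{g}{l}$ index choices; both are the same averaging argument.
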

Note that the integral (\ref{genPoinc}) is zero if any of the indices $i_h$ appear repeated, since the classes $\sigma_i$ are nilpotent by anticommutativity of the $\xi_i$.
\begin{proof}
We observe that the left-hand side of (\ref{genPoinc}) cannot depend on the choice of indices $i_1,\ldots,i_l$ for a fixed $l$, and then summing over all choices of ordered indices $i_1<\ldots <i_l$ we obtain
\begin{eqnarray*}
\int_{\Sym^{k}(\Sigma)} \eta^{k-j-l} \theta^{j} \sum_{i_1<\ldots <i_l}  \sigma_{i_1}\cdots \sigma_{i_l}& =&
\int_{\Sym^{k}(\Sigma)} \eta^{k-j-l} \theta^{j} \frac{1}{l!}\theta^l \\
&=& \frac{g!}{l!(g-j-l)!},
\end{eqnarray*}
where  the nilpotency of the $\sigma_i$ was used in the first step, and (\ref{binomialSym}) in the second step. Since there are $g \choose l$ different choices of ordered indices, the claim (\ref{genPoinc}) follows.
\end{proof}

At this point we turn to the moduli space $\mathsf{M}_{(k_\tp,k_\tm)}^{\CC^2,e}(\Sigma)$, which comes equipped with projections  
$$p_\pm: {\mathsf{M}_{(k_\tp,k_\tm)}^{\CC^2,e}(\Sigma)} \rightarrow {\rm Sym}^{k_\tpm} (\Sigma)$$
onto each factor of the product (\ref{productsyms}). We shall use sub- and superscripts $\pm$ to decorate the cohomology classes we have defined on the symmetric products to denote their pullbacks by $p_\pm$ to the moduli space. Thus 
$\eta_\tpm:=p_\pm^*c_1(\mathcal{O}_{\PP(\mathcal{V}_\tpm)}(1))$ are pullbacks of the first Chern classes of the tautological line bundles
over $\PP(\mathcal{V}_\tpm)$, whereas $\theta_\tpm:=({\rm AJ}_{k_\tpm}\! \circ p_\tpm)^* {\rm PD}[\Theta]$ are pullbacks of
the theta class in ${\rm Jac}(\Sigma)$ (independent of the choice of basepoints). We will also write
$$
\zeta_i:=\xi^\tp_{i}\xi^\tm_{i+g}-\xi^\tp_{i+g} \xi^\tm_{i},\qquad 1\le i\le g.
$$

We are now ready to state and prove the main result of this section.

\begin{theorem} \label{L2geomGLSM}
Under the conditions of Theorem~\ref{prop:modulispaceC2}, the K\"ahler class for the  $L^2$ metric on the vortex moduli space (\ref{productsyms}) of the GLSM with target $\CC^2$ is
given by
\begin{equation} \label{omegaL2}
[\omega_{L^2}^{e}] = \sum_{\sigma=\pm} (J_\sigma \eta_\sigma + K_\sigma \theta_\sigma) + 4 \pi^2 \sum_{i=1}^g \zeta_i,
\end{equation}
where
{\begin{eqnarray*}
J_\tp&:=& 2\pi (1-\tau) \Vols - 4\pi^2 ( k_\tp-k_\tm) ,\\
J_\tm&:=& 2\pi (1+\tau) \Vols - {4\pi^2}{e^{-2}}k_\tm+4\pi^2(k_\tp-k_\tm) ,\\
K_\tp&:=& 4 \pi^2,\\
K_\tm&:=& {4\pi^2}(1+{e^{-2}}) .
\end{eqnarray*}
This leads to
\begin{equation} \label{volC2}
\Vol \left(\mathsf{M}_{(k_\tp,k_\tm)}^{\CC^2, e}(\Sigma)\right) =  
 \sum_{\ell=0}^g \frac{g!(g-\ell)!}{(-1)^\ell \ell!} \prod_{\sigma=\pm} \sum_{j_\sigma=\ell}^g  \frac{(2\pi)^{2\ell} J_\sigma^{k_\sigma-j_\sigma}K_\sigma^{j_\sigma-\ell}}{(j_\sigma-\ell)!(g-j_\sigma)!(k_\sigma-j_\sigma)!}.
 \end{equation}
}
Defining further
\begin{eqnarray} \label{Chat}
S_\sigma (i_\tpm,j\tpm,k\tpm,\ell):= \frac{(k_\sigma+i_\sigma-2g) J_\sigma^{k_\sigma-i_\sigma-1} J_{-\sigma}^{k_{-\sigma} -j_\sigma} K_\sigma^{i_\sigma-\ell} K_{-\sigma}^{j_\sigma-\ell} } {(i_\sigma-\ell)! (j_\sigma-\ell)! (k_\sigma-i_\sigma-1)! (k_{-\sigma}-j_\sigma)! (g-i_\sigma)! (g-j_\sigma)!},
\end{eqnarray}
one also has
{\small
\begin{eqnarray*}
\int_{\mathsf{M}_{(k_\tp,k_\tm)}^{\CC^2,e}(\Sigma)} \!\!\!\!\!\!\!\!\!\!\!\!\!\!\!\!\! \Sc &=&4 \pi   (g!)^2 \sum_{\ell=0}^g  \frac{(-1)^\ell (k_\tp +k_\tm -2\ell-1)!}{\ell!(k_\tp+k_\tm-1)!} \sum_{\sigma=\pm} \sum_{i_\sigma=\ell}^{{\rm min}\{g,k_\sigma-1\}} 
\sum_{j_\sigma=0}^g S_\sigma (i_\tpm,j\tpm,k\tpm,\ell) .
\end{eqnarray*}
}
\end{theorem}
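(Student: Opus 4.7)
The plan is to obtain (\ref{omegaL2}) by specialising the general cohomological computation of K\"ahler classes on Abelian GLSM vortex moduli spaces in \cite{BapL2}, and then to integrate appropriate powers of this class using the generalised Poincar\'e formula (\ref{genPoinc}) to deduce (\ref{volC2}) and the total scalar curvature.

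For the K\"ahler class, Baptista's formula delivers a sum over the $\mathrm{U}(1)$ factors of the structure group: each sector $\sigma$ contributes a term proportional to $\eta_\sigma$ pulled back from the projectivisation $\PP H^0(\Sigma,L_\sigma)$, plus an Abel--Jacobi pullback from $\mathrm{Jac}(\Sigma)$ that further decomposes into the diagonal part $\theta_\sigma$ and the off-diagonal classes $\zeta_i$. With $\mathbf{Q}_\tp=(1,1)$, $\mathbf{Q}_\tm=(1,0)$, $(\tau_1,\tau_2)=(4,2-2\tau)$, $(e_1,e_2)=(e,1)$ and $(k_1,k_2)=(k_\tm,k_\tp-k_\tm)$ by (\ref{npm}), the linear combinations of the Bradlow-type quantities $\tfrac12\tau_j\Vols-2\pi k_j/e_j^2$ weighted by the charge vectors $\mathbf{Q}_\sigma$ match the asserted values of $J_\pm,K_\pm$; the coefficient $4\pi^2$ of each $\zeta_i$ is universal because it arises from the $e$-independent $\mathrm{U}(1)_2$ factor alone.

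For the volume I would write
$$
\Vol(\M^{\CC^2,e}_{(k_\tp,k_\tm)})=\frac{1}{N!}\int_\M[\omega_{L^2}^e]^N,\qquad N=k_\tp+k_\tm,
$$
and expand by the multinomial theorem. Since $\xi^\sigma_i\xi^\sigma_j=-\xi^\sigma_j\xi^\sigma_i$, the number $\ell$ of $\zeta_i$ factors appearing in a surviving monomial is bounded by $g$, and for fixed $\ell$ the distinct index choices $i_1<\cdots<i_\ell$ each contribute equally. Applying (\ref{genPoinc}) on each factor of $\Sym^{k_\tp}(\Sigma)\times\Sym^{k_\tm}(\Sigma)$, after gathering the signs produced by the alternation in $\zeta_i=\xi^\tp_i\xi^\tm_{i+g}-\xi^\tp_{i+g}\xi^\tm_i$ (which combine into the net factor $(-1)^\ell$ in (\ref{volC2})), the sum over $\ell$ and over the powers $j_\tp,j_\tm$ of $\theta_\tp,\theta_\tm$ reproduces (\ref{volC2}). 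For the total scalar curvature I would use the K\"ahler identity
$$
\int_\M\Sc\,\frac{\omega_{L^2}^N}{N!}=\frac{4\pi}{(N-1)!}\int_\M c_1(\M)\wedge[\omega_{L^2}^e]^{N-1},
$$
decompose $c_1(\M)$ into the sum of pullbacks from $\PP(\mathcal V_\tp)$ and $\PP(\mathcal V_\tm)$, and use the relative Euler sequence for $\PP(\mathcal{V}_\sigma)\to\mathrm{Pic}^{k_\sigma}(\Sigma)$ together with a Grothendieck--Riemann--Roch computation of $c_1(\mathcal{V}_\sigma)$ (which identifies $\mathrm{AJ}^*_{k_\sigma}c_1(\mathcal V_\sigma)$ as an explicit multiple of $\theta_\sigma$) to express $c_1(\M)$ as a prescribed combination of $\eta_\pm,\theta_\pm$. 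The remaining integrals are of exactly the same type as for the volume, with one extra $\eta_\sigma$ or $\theta_\sigma$ factor inserted; collecting terms yields the stated formula in terms of the $S_\sigma$ of (\ref{Chat}).

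The main obstacle is the combinatorial bookkeeping of the cross terms involving the $\zeta_i$. While each monomial reduces to a single application of (\ref{genPoinc}) on one symmetric product, assembling all the binomial coefficients, factorials, and signs produced by the alternating pairing in $\zeta_i$ into the compact expressions (\ref{volC2}) and (\ref{Chat}) requires a systematic accounting of the degree constraints on each factor and of which choices of indices $i_1,\ldots,i_\ell$ survive once the two symmetric products are fully populated. Conjecture~\ref{blaski} is not needed here: the theorem is a purely cohomological statement about the compact GLSM moduli space $\M^{\CC^2,e}_{(k_\tp,k_\tm)}$.
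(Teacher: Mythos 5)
Your proposal follows essentially the same route as the paper: specialise Baptista's K\"ahler class formula to the two-coupling GLSM, convert from the ${\rm U}(1)_1\times{\rm U}(1)_2$ basis to the $\pm$ basis (which is where the $\zeta_i$ classes and the stated $J_\pm,K_\pm$ arise), integrate powers of the class via the generalised Poincar\'e formula (\ref{genPoinc}) with the $(-1)^\ell$ bookkeeping for the $\zeta_i$, and reduce the total scalar curvature to the same integrals using $c_1$ of the product of symmetric powers. The only cosmetic difference is that you propose rederiving $c_1({\rm T}\,{\rm Sym}^{k}(\Sigma))=(k-g+1)\eta-\theta$ from the relative Euler sequence and Grothendieck--Riemann--Roch, whereas the paper simply cites this formula from Macdonald and Bertram--Thaddeus.
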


\begin{proof}

Our first claim is that the formula proposed by Baptista for the K\"ahler class $[\omega_{L^2}] \in H^2({\mathsf{M}_{(k_\tp,k_\tm)}^{\CC^2}(\Sigma)};\RR)$ in Theorem 4.4 of reference~\cite{BapL2} generalises to the parent GLSM with two coupling constants introduced in our Section~\ref{sec:GLSM} as follows:
\begin{equation} \label{formulaomegaL2}
[\omega_{L^2}] =   \sum_{j=1}^2 \left\{ \left( {\pi \tau_j} \Vols - \frac{4 \pi^2}{e_j^2}  k_j \right)\eta_j + \frac{4\pi^2}{e_j^2} \theta_j \right\}.
\end{equation}
Here, as in \cite{BapL2}, $\eta_j$ denote first Chern classes of the holomorphic line bundles $\mathcal{L}_j\rightarrow \M^{\CC^2,e}_{(k_\tp,k_\tm)}$ associated to
the principal $\TT^2$-bundle $\mathcal{V}^{\rm ss} \rightarrow \M^{\CC^2,e}_{(k_\tp,k_\tm)}$ introduced in the proof of Theorem~\ref{prop:modulispaceC2}
via the standard action of the subgroup ${\rm U}(1)_j \subset\TT^2 $ on $\CC$, whereas $\theta_j$ denote pullbacks of the theta classes in the Jacobian of $\Sigma$
under the maps $ \M^{\CC^2,e}_{k_\tp,k_\tm} \rightarrow {\rm Pic}^{k_j}(\Sigma)\cong {\rm Jac}(\Sigma)$ induced by the assignment of a line bundle $L_j\rightarrow \Sigma$  with holomorphic structure
${\bar\partial}^{A_j}$ to each pair $((A_1,A_2),(\phi_\tp,\phi_\tm))$. The derivation of (\ref{formulaomegaL2}) is entirely analogous to the one needed to
establish Baptista's formula, and will not be reproduced here.

Now we want to rearrange the sum over $j$ above as a sum over signs $\sigma=\pm$. We take advantage of the isomorphism
 $$
\Psi:
 {\rm Pic}^{k_\tp}(\Sigma) \times {\rm Pic}^{k_\tm}(\Sigma)
  \stackrel{\cong}{\longrightarrow} 
  {\rm Pic}^{k_1}(\Sigma) \times {\rm Pic}^{k_2}(\Sigma)
$$
given by
$$(L_\tp,L_\tm) \mapsto (L_\tm,L_\tp\otimes L^*_\tm)=:(L_1,L_2)$$ to reorganise the fibres of the vector bundle
$$
\mathcal{V} \rightarrow  {\rm Pic}^{k_1}(\Sigma) \times {\rm Pic}^{k_2}(\Sigma)
$$
considered by Baptista into a direct sum $\Psi^*\mathcal{V}$ whose two summands separately fibre as Picard vector bundles $\mathcal{V}_\tpm$ over each factor of the base. Under this isomorphism, the classes $\eta_j$ introduced by Baptista relate to the classes $\eta_\tpm=c_1(O_{\PP(\mathcal{V}_\tpm)}(1))$
defined above as
$$
\eta_1=\eta_\tm, \qquad \eta_2=\eta_\tp-\eta_\tm.
$$
The relation between the pull-backs of the theta classes in the two descriptions is slightly more complicated. We can write for $j=1,2$
$$
\theta_j=\sum_{i=1}^g \xi_{i}^{(j)}\xi_{i+g}^{(j)},
$$
where the $\xi_{i}^{(j)}$ (with $1\le i\le 2g$) are the natural generators for $H^1({\rm Pic}^{k_j}(\Sigma);\ZZ)$ obtained from the description ${\rm Pic}^{k_j}(\Sigma)\cong {\rm Jac}(\Sigma)$ as a complex torus; they relate to the  corresponding generators $\xi_i^\pm$ linearly as
$$
\xi_i^{(1)}=\xi_i^\tm\qquad \text{and} \qquad \xi_{i}^{(2)}=\xi_i^\tp-\xi_i^\tm, \qquad 1\le i\le 2g.   
$$
Hence we find
$$
\theta_1 = \theta_\tm \qquad \text{and} \qquad \theta_2= \sum_{i=1}^g(\xi_i^\tp - \xi_i^\tm)(\xi_{i+g}^\tp-\xi_{i+g}^\tm)= \theta_\tp + \theta_\tm-\sum_{i=1}^g\zeta_i .
$$

We observe that any monomial involving $\eta_\pm$ and an odd number of $\xi^\pm_i$s must vanish (see (5.4) in \cite{Mac} for a justification). Using this, it is not hard to check
that, under the assumption $k_\pm\ge 2g-1$,
\begin{equation} \label{liouville}
\frac{[\omega^e_{L^2}]^{k_\tp+k_\tm}}{(k_\tp+k_\tm)!} = \sum_{\ell=0}^{g} \frac{(2\pi)^{4\ell} (J_\tp \eta_\tp + J_\tm \eta_\tm+K_\tp\theta_\tp + K_\tm\theta_\tm)^{k_\tp+k_\tm-2\ell}}{(2\ell)!(k_\tp+k_\tm-2\ell)!}\left( \sum_{i=1}^g \zeta_i \right)^{2\ell},
\end{equation}
and furthermore, attending to the  nilpotency of the classes $\sigma_i$,
\begin{equation}\label{skewterm}
\left( \sum_{i=1}^{g} \zeta_i \right)^{2\ell} = (-1)^{\ell}(2\ell)! \sum_{i_1 <\cdots < i_\ell}\sigma^+_{i_1} \sigma^-_{i_1}\cdots \sigma^+_{i_\ell} \sigma^-_{i_\ell}.
\end{equation}

We now have from (\ref{liouville}) and (\ref{skewterm}), and noting that $\theta_s^j=0$ for $j>g$,
\begin{eqnarray*}
\int \frac{[\omega^e_{L^2}]^{k_\tp+k_\tm}}{(k_\tp+k_\tm)!} &=& \sum_{\ell=0}^g (-1)^{\ell} (2\pi)^{4\ell} \sum_{i_1<\cdots<i_\ell}\prod_{s=\pm}
\int_{{\rm Sym}^{k_s}(\Sigma)}\frac{(J_s\eta_s+K_s\theta_s)^{k_s-\ell}}{(k_s-\ell)!} \sigma^s_{i_1}\cdots \sigma^s_{i_\ell} \\
&=& \sum_{\ell=0}^g (-1)^{\ell} (2\pi)^{4\ell} \sum_{i_1<\cdots<i_\ell}\prod_{s=\pm} \sum_{j_s=0}^{{\rm min}\{g-\ell,k_s-\ell\}} \frac{J_s^{k_s-j_s-\ell}K_s^{j_s}}{j_s!(k_s-j_s-\ell)!}\frac{(g-\ell)!}{(g-j_s-\ell)!}\\
&=&  \sum_{\ell=0}^g (-1)^{\ell} {g\choose \ell}\prod_{s=\pm} \sum_{j_s=0}^{g-\ell} \frac{(2\pi)^{2\ell}J_s^{k_s-j_s-\ell}K_s^{j_s}}{j_s!(k_s-j_s-\ell)!} \frac{(g-\ell)!}{(g-j_s-\ell)!}
\end{eqnarray*}
after use of (\ref{genPoinc}) and (\ref{nposstable}), whence (\ref{volC2}) immediately follows.

To obtain the formula for the total scalar curvature, we take advantage of the splitting of the first Chern class of the product (\ref{productsyms}) as a sum over each factor:
$$
c_1({\rm T}(\Sym^{k_\tp}(\Sigma) \times \Sym^{k_\tm}(\Sigma))) = \sum_{s=\pm} c_1({\rm T}\Sym^{k_s}(\Sigma)).
$$
This reduces the calculation to the evaluation of integrals over the factors, as follows:
\begin{eqnarray*}
\int_{\mathsf{M}_{(k_\tp,k_\tm)}^{\CC^2, e}(\Sigma)} \!\!\!\!\!\!\!\!\!\!\!\!\! \Sc &=&
\frac{4\pi}{(k_\tp+k_\tm-1)!} \int_{\mathsf{M}_{(k_\tp,k_\tm)}^{\CC^2, e}(\Sigma)} \!\!\!\!\!\quad   c_1({\rm T}\mathsf{M}_{(k_\tp,k_\tm)}^{\CC^2, e}(\Sigma)) \wedge (\omega^e_{L^2})^{k_\tp+k_\tm-1} \\
&=& \frac{4 \pi}{(k_\tp+k_\tm-1)!} \sum_{\ell=0}^g  \sum_{s=\pm}{k_\tp + k_\tm -2\ell - 1 \choose k_s-\ell-1} \times \\
&& \times \int_{\Sym^{k_s}(\Sigma)} c_1({\rm T} \Sym^{k_s}(\Sigma))  \wedge (J_s \eta_s+K_s\theta_s)^{k_s-\ell-1} \sigma_{i_1}^s \cdots \sigma_{i_\ell}^s  \times \\
&& \times \int_{\Sym^{k_{\tm s}}(\Sigma)}(J_{\tm s} \eta_{\tm s} + K_{\tm s} \theta_{\tm s})^{k_{\tm s} -\ell}
\sigma_{i_1}^{-s} \cdots \sigma_{i_\ell}^{-s}.
\end{eqnarray*}
Using the formula for the first Chern class (see \cite{Mac}, \cite{BerTha})
$$
c_1({\rm T}\Sym^{k_s}(\Sigma)) = (k_s-g+1) \eta_s - \theta_s,
$$
and once again (\ref{genPoinc}) together with (\ref{thetag}), we arrive, after a somewhat tedious calculation, at the result stated.

\end{proof}

In light of Conjecture \ref{blaski} we expect that, in the limit $e\ra\infty$, the volume and
total scalar curvature of $\M^{\CC^2,e}_{(k_\tp,k_\tm)}(\Sigma)$ should converge to the
volume and scalar curvature of the moduli space of
$\PP^1$ vortices. That is:

\begin{conjecture} \label{volconj}
The total volume and the total scalar curvature of the moduli spaces $\mathsf{M}_{(k_\tp,k_\tm)}^{\PP^1}(\Sigma)$ are finite, and given by setting in the expressions for 
$$\Vol \left(\mathsf{M}_{(k_\tp,k_\tm)}^{\CC^2,e}(\Sigma)\right) \qquad \text{ and } \qquad
\int_{\mathsf{M}_{(k_\tp,k_\tm)}^{\CC^2,e}(\Sigma)} \!\!\!\!\!\!\!\!\!\!\! \Sc\qquad$$ obtained in Proposition~\ref{L2geomGLSM}, \swap{respectively,}{}
\begin{eqnarray*} \label{limCD}
J_\tpm &  := &  2\pi (1\mp \tau) \Vols \mp 4\pi^2 (k_\tp-k_\tm),  \\ 
K_\tpm &:= & 4\pi^2.
 \end{eqnarray*}
\end{conjecture}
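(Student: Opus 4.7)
The plan is to exploit Conjecture~\ref{blaski} to transfer the cohomological calculations of Theorem~\ref{L2geomGLSM} to the $\P^1$-model by letting $e\to\infty$. At the cohomological level, the coefficients $J_\tpm(e)$ and $K_\tpm(e)$ displayed under \eqref{omegaL2} are rational in $e^{-2}$; setting $e^{-2}=0$ yields precisely the values for $J_\tpm, K_\tpm$ proposed in Conjecture~\ref{volconj}. Since the volume formula \eqref{volC2} and the scalar-curvature formula are polynomials in $J_\sigma, K_\sigma$, each right-hand side depends continuously on $e$ and passes trivially to the claimed limit. The substantive content of the conjecture is therefore the analytic assertion that the $L^2$-volume and total scalar curvature of $\M^{\P^1}_{(k_\tp,k_\tm)}(\Sigma)$ are finite and coincide with the naive $e\to\infty$ limits of the corresponding quantities on $\M^{\CC^2,e}_{(k_\tp,k_\tm)}(\Sigma)$.

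To set up the limit, recall that $\iota_e$ is an open embedding whose complement in $\M^{\CC^2,e}_{(k_\tp,k_\tm)}(\Sigma)$ is the large diagonal $D_{(k_\tp,k_\tm)}$, a complex hypersurface of real codimension two, hence of $g_{L^2}^e$-measure zero. Consequently
$$\Vol\bigl(\M^{\CC^2,e}_{(k_\tp,k_\tm)}(\Sigma),g_{L^2}^e\bigr)=\int_{\M^{\P^1}_{(k_\tp,k_\tm)}(\Sigma)}\iota_e^*{\rm vol}_{g_{L^2}^e}.$$
By Conjecture~\ref{blaski}, $\iota_e^*{\rm vol}_{g_{L^2}^e}\to {\rm vol}_{g_{L^2}}$ uniformly on compact subsets of $\M^{\P^1}_{(k_\tp,k_\tm)}(\Sigma)$, so the volume part of the conjecture would follow from a dominated convergence argument, provided one can produce a uniform-in-$e$ integrable majorant, or equivalently an estimate of the form $\Vol_{g_{L^2}^e}(N_\delta)\le F(\delta)$ with $F(\delta)\to 0$ as $\delta\to 0$, where $N_\delta$ denotes the $\delta$-neighborhood of the discriminant in some fixed reference metric on the compactification.

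The main obstacle is establishing this boundary-mass control. The rigorous estimates in Sections~\ref{sec:Fsmalleps} and~\ref{sec:vol11} already show that, on $\M^{\P^1}_{(1,1)}(\R^2)$, the conformal factor of $g_{L^2}$ blows up only logarithmically near the discriminant (see~\eqref{lilise}), predicting behaviour $\Vol_{g_{L^2}}(N_\delta)=O(\delta^2|\log\delta|^{k_\tp+k_\tm})$. The task is to lift these estimates to the family $g_{L^2}^e$ uniformly as $e\to\infty$, by adapting the elliptic analysis of Section~\ref{sec:vol11} to the parent GLSM vortex equations \eqref{vortlin1}--\eqref{vortlin2}. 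The total scalar curvature case is analogous but formally requires $C^2$-convergence; a cleaner route is to recast the integrand using the cohomological identity $\int_\M \Sc = (4\pi/(k_\tp+k_\tm-1)!)\int c_1({\rm T}\M)\wedge[\omega_{L^2}]^{k_\tp+k_\tm-1}$, observe that $c_1({\rm T}\M)$ is topological and independent of $e$, and reduce everything to the same boundary-mass control.

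As a consistency check, specializing \eqref{volC2} with the conjectured $e\to\infty$ values to $\Sigma=S^2_R$, $g=0$, $\tau=0$ and $(k_\tp,k_\tm)=(1,1)$, only the $\ell=j_\tp=j_\tm=0$ term survives, giving $\Vol=J_\tp J_\tm=(8\pi^2 R^2)^2$, in precise agreement with the rigorous formula~\eqref{volumeformula} of Theorem~\ref{thmvol}.
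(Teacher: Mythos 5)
Your proposal is correct and follows essentially the same route as the paper: the statement is a conjecture, motivated exactly as you do by substituting $e^{-2}=0$ into the coefficients of Theorem~\ref{L2geomGLSM} and invoking Conjecture~\ref{blaski} to pass the (polynomial) volume and scalar-curvature formulae to the limit, with the same consistency check against Theorem~\ref{thmvol} in the case $k_\tp=k_\tm=1$, $\Sigma=S^2_R$, $\tau=0$. Your additional remarks on the boundary-mass control needed to make the limit rigorous go slightly beyond what the paper records, but do not change the argument.
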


Note that this conjecture is supported by our rigorous formula (\ref{volumeformula}), justified in Section~\ref{sec:vol11}, in the case
where $k_\tp=k_\tm=1, \Sigma=S^2_R$ and $\tau=0$.

\section{Thermodynamics of vortex-antivortex gases}  \label{sec:thermo}\news

We conclude our study by extracting some physical consequences of the general volume formulae for the moduli spaces ${\sf M}^{\PP^1}_{(k_\tp,k_\tm)}(\Sigma)$ that we have obtained.

In Section 4 of~\cite{ManNas}, the framework of Gibbs's classical statistical mechanics was used to derive an entropy formula as well as an equation of state for a (two-dimensional) gas of
vortices in a line bundle; see also \cite{Per} for a check of the volume formula in \cite{ManNas}, and  \cite{Man,RomGVB,EasRom} for related work in $g=0$. Here, we will take a similar approach to investigate the thermodynamics of a {\em gas mixture}
containing BPS vortices and antivortices in the gauged $\PP^1$ model at criticality, building on our volume formulae in the previous section.
As in~\cite{ManNas}, we shall assume that our gases contain a large number of particles ($k_\tp$ vortices and/or $k_\tm$ antivortices)
occupying a compact surface of large area $\Vols$. It is sensible to speak of a {\em thermodynamic limit} when these quantities become infinite while keeping
the particle densities
\begin{equation} \label{densities}
\nu_\tpm := \frac{k_\tpm}{\Vols} =: \frac{k_\tpm}{V}
\end{equation}
finite, and not simultaneously zero. Throughout this section, we shall use as in (\ref{densities}) the shorthand notation $V:= \Vols$. 

The starting point is the Hamiltonian governing the so-called {\em adiabatic approximation}~\cite{StuA} to the classical dynamics in the gauged sigma model in 2+1 dimensions, as proposed by Manton~\cite{ManSut,StuA}. This is a function $H$ on the phase space ${\rm T}^*\mathsf{M}_{(k_\tp,k_\tm)}^{\PP^1}(\Sigma)$ (equipped with its canonical symplectic structure $\omega_{\rm can}$) obtained by adding to the Hamiltonian for the geodesic flow of the  $L^2$ metric $g_{L^2}$, playing the role of kinetic energy, the constant-time energy bound in (\ref{Ebound}): 
$$H= \frac{1}{2}| \vartheta |^2_{g_{L^2}}+ 2\pi(1-\tau)k_\tp +  2\pi(1+\tau)k_\tm;$$
the latter gives a good estimate for the potential energy of the classical field theory dynamics from below, for small field momenta in configurations that evolve approximately (in the sense of the metric (\ref{L2innprod})) along vortex solutions.
We are using $\vartheta$ as notation for the
tautological 1-form with ${\rm d}\vartheta = -\omega_{\rm can}$. 

Let us denote the temperature by $T$, and also Planck's and Boltzmann's constants by $2\pi \hbar$ and $K$, respectively. Then the
partition function in classical statistical mechanics for the Hamiltonian system $({\rm T}^*\mathsf{M}_{(k_\tp,k_\tm)}^{\PP^1}(\Sigma), \omega_{\rm can}, H)$ is given by
\begin{eqnarray*}
\mathcal{Z}&=&\frac{1}{(2\pi\hbar)^{2(k_++k_-)}} \int_{{\rm T}^*\mathsf{M}_{(k_\tp,k_\tm)}^{\PP^1}(\Sigma)}   {\rm e}^{-\frac{H}{KT}} \frac{\omega_{\rm can}^{2(k_\tp + k_\tm)}}{(2(k_\tp + k_\tm))!} \\
&=& \left( \frac{KT}{2 \pi \hbar^2} \right)^{k_\tp+k_\tm}  \int_{\mathsf{M}_{(k_\tp,k_\tm)}^{\PP^1}(\Sigma)} {\rm e}^{- \frac{2\pi}{KT}((1-\tau)k_\tp + (1+\tau)k_\tm)} \frac{\omega_{L^2}^{k_\tp+k\tm}}{(k_\tp+k_\tm)!} \\
&=& \left( \frac{KT}{2 \pi \hbar^2} \right)^{k_\tp+k_\tm} {\rm e}^{- \frac{2\pi}{KT}((1-\tau)k_\tp + (1+\tau)k_\tm)} {\rm Vol} \left( \mathsf{M}_{(k_\tp,k_\tm)}^{\PP^1}(\Sigma) \right)  \\[10pt]
&=& \mathcal{Z}_\tp \mathcal{Z}_\tm
 \sum_{\ell=0}^g \frac{g!(g-\ell)!}{(-1)^\ell \ell!} \prod_{\sigma=\pm} \sum_{j_\sigma=\ell}^{g} 
  \frac{(2\pi)^{j_\sigma} [(1-\sigma\tau)V-2\pi(k_\sigma-k_{-\sigma})]^{k_\sigma-j_\sigma}}
 {(j_\sigma-\ell)!(k_\sigma-j_\sigma)!(g-j_\sigma)!},
\end{eqnarray*}
with
$$
\mathcal{Z}_\tpm:= \left( \frac{KT}{ \hbar^2} \right)^{k_\tpm} {\rm e}^{- \frac{2\pi}{KT}(1\tmp\tau)k_\tpm}.
$$
The first step  above involves a Gaussian integration along the cotangent fibres, which was justified in \cite{Man}, whereas the last step makes use of the volume formula in Conjecture~\ref{volconj}.

The Helmholtz free energy of the system can be computed as 
\begin{eqnarray*}
F&=&-KT \log \mathcal{Z}\\
&=&-KT (k_\tp + k_\tm) \log \frac{K T}{\hbar^2} + 2 \pi \sum_{\sigma = \pm}  (1-\sigma \tau)k_\sigma \\
&&\qquad -KT \log \left[\sum_{\ell=0}^g \frac{g!(g-\ell)!}{(-1)^\ell \ell!} \prod_{\sigma=\pm} \sum_{j_\sigma=\ell}^{g} 
  \frac{(2\pi)^{j_\sigma} [(1-\sigma\tau)V-2\pi(k_\sigma-k_{-\sigma})]^{k_\sigma-j_\sigma}}
 {(j_\sigma-\ell)!(k_\sigma-j_\sigma)!(g-j_\sigma)!}\right].
\end{eqnarray*}
We observe that the sum over $\ell$ can be rearranged as
\begin{equation} \label{lastterm}
\sum_{\ell=0}^g  (-1)^\ell {g \choose \ell}
\prod_{\sigma=\pm}  \frac{(2\pi V)^{k_\sigma}}{k_\sigma!}\sum_{j_\sigma=0}^{g-\ell}   {g -\ell \choose j_\sigma} 
\left( \frac{1-\sigma\tau}{2\pi} - \nu_\sigma + \nu_{-\sigma} \right)^{k_\sigma-j_\sigma-\ell} 
\frac{k_\sigma! 	\, {V}^{-j_\sigma-\ell}}{(k_\sigma-j_\sigma-\ell)!},
\end{equation}
where the last fraction enjoys the asymptotics
\begin{equation} \label{asymptnu}
\frac{k_\sigma! \, {V}^{-j_\sigma-\ell}}{(k_\sigma-j_\sigma-\ell)!} = \left( \frac{k_\sigma}{V}\right)^{j_\sigma+\ell} + O\left( \frac{1}{V}\right) =  
\nu_\sigma^{j_\sigma+\ell }+  O\left( \frac{1}{V}\right)
\qquad \text{as } V \rightarrow \infty
\end{equation} 
in the thermodynamic limit defined above. Truncating (\ref{asymptnu}) to the leading term,  we estimate (\ref{lastterm}) in this limit by
$$
\sum_{\ell=0}^g (-1)^\ell {g \choose \ell} 
\prod_{\sigma=\pm} \frac{(2\pi V)^{k_\sigma} \nu_\sigma^{\ell}}{k_\sigma!}  \left( \frac{1-\sigma\tau}{2\pi} - \nu_\sigma + \nu_{-\sigma} \right)^{k_\sigma-g} \left( \frac{1-\sigma\tau}{2\pi}  + \nu_{-\sigma} \right)^{g-\ell} =
$$
$$
= \left( \frac{1-\tau^2}{(2\pi)^2} + \frac{1-\tau}{2\pi}\nu_\tp + \frac{1+\tau}{2\pi}\nu_\tm \right)^g
\prod_{\sigma=\pm} \frac{(2\pi V)^{k_\sigma} }{k_\sigma!}  \left( \frac{1-\sigma\tau}{2\pi} - \nu_\sigma + \nu_{-\sigma} \right)^{k_\sigma-g}.
$$
To proceed, we employ Stirling's approximation for large $k_\sigma$
$$
\log (k_\sigma!) \approx k_\sigma \log k_\sigma - k_\sigma,
$$
and are led to the estimate
\begin{eqnarray}
F &\approx &2 \pi \sum_{\sigma = \pm}  (1-\sigma \tau)k_\sigma 
- KT \sum_{\sigma=\pm} k_\sigma \log \frac{ {\rm e} K T}{\hbar^2 \nu_\sigma} \nonumber 
- KT  \sum_{\sigma=\pm} k_\sigma \log  \left[ 1-\sigma\tau - 2\pi (\nu_\sigma-\nu_{\tm\sigma} ) \right] \\
&&-KTg  \log \frac{1-\tau^2 +2\pi[(1-\tau) \nu_\tp + (1+\tau)\nu_\tm]}{1-\tau^2-4\pi \tau(\nu_\tp-\nu_\tm) - (2\pi)^2(\nu_\tp - \nu_\tm)^2}.   \label{freeenergy}
\end{eqnarray}

We can use the asymptotic version (\ref{freeenergy}) of $F$ to obtain (an approximation to) the entropy of the system as
\begin{eqnarray} \label{firstentropy}
{S} = -\frac{\partial F}{\partial {T}} & \approx&
K\sum_{\sigma=\pm} k_\sigma \log \frac{{\rm e}^2 KT}{\hbar^2 \nu_\sigma} + 
K  \sum_{\sigma=\pm} (k_\sigma-g)  \log  \left[ 1-\sigma\tau - 2\pi (\nu_\sigma-\nu_{\tm\sigma} ) \right]  \nonumber \\
&&+ Kg  \log  \left[ {1-\tau^2 +2\pi[(1-\tau) \nu_\tp + (1+\tau)\nu_\tm]} \right].
\end{eqnarray}
Likewise, we calculate the pressure of the system (no principal bundles within this section!) to be
\begin{eqnarray} \label{thepressure}
{P}=-\frac{\partial F}{\partial V} & \approx&
 KT \sum_{\sigma=\pm} \nu_\sigma+KT  \sum_{\sigma=\pm} \left(\nu_\sigma-\frac{g}{V}\right) \frac{ 2\pi (\nu_\sigma - \nu_{\tm \sigma})}{1-\sigma\tau - 2\pi (\nu_\sigma-\nu_{\tm\sigma} )} \nonumber\\
 &&+\frac{gKT}{V}  \frac{2\pi[(1-\tau) \nu_\tp + (1+\tau)\nu_\tm]}{1-\tau^2 +2\pi[(1-\tau) \nu_\tp + (1+\tau)\nu_\tm]}.
\end{eqnarray}
This result would be expected to deliver an equation of state for the gas mixture; however, it does not yet incorporate the scaling properties that one usually demands on such an equation (which should make it amenable to a virial expansion~\cite{Cal}, for instance). To remedy this, we observe that in our thermodynamic limit 
one may discard the terms proportional to the ratio $\frac{g}{V}$, which tends to zero. By doing so, we
derive from (\ref{thepressure}) a simplified equation of state of the form 
\begin{eqnarray} \label{eqstate}
P &\approx& KT  \sum_{\sigma=\pm} \nu_\sigma +KT  \sum_{\sigma=\pm}  \frac{2 \pi \nu_\sigma(\nu_\sigma - \nu_{\tm \sigma})}
{1-\sigma\tau - 2\pi (\nu_\sigma-\nu_{\tm\sigma} )} \nonumber\\
&=&  \sum_{\sigma=\pm}  \frac{KT(1-\sigma\tau)\nu_\sigma}{1-\sigma\tau - 2\pi (\nu_\sigma-\nu_{\tm\sigma} )} \\
&=& \sum_{\sigma=\pm}  \frac{KTk_\sigma}{V - \frac{2\pi}{1-\sigma\tau}(k_\sigma-k_{\tm\sigma} )} \nonumber.
\end{eqnarray}
This admits the following interpretation: the total pressure $P$ is the sum of two partial pressures $P_\tpm$, each being a deformation to
the pressure $KTk_\tpm/V$ of an ideal gas taking into account a correction of the area $V$ resulting from particle interactions. The correction subtracts $k_\tpm$ times
a multiple of an effective vortex size $\frac{2\pi}{1\mp \tau}$ from the actual area $V$, as appropriate for rigid bodies obeying a Clausius equation of state, but also {\em adds}
to it $k_\tmp$ times the same characteristic size --- suggesting that the two species form bound states, counteracting the effect of rigid body motion through effective depletion of particles of a given species.

The result (\ref{eqstate}) has the attractive feature of leading to a complete virial expansion for two species
\begin{eqnarray}
\frac{P}{KT}&=&  \sum_{n=0}^\infty  \sum_{\sigma=\pm} \nu_\sigma \left( \frac{2\pi}{1-\sigma\tau} (\nu_\sigma-\nu_{\tm \sigma})\right)^n \nonumber \\
&=& \nu_\tp + \nu_\tm+\frac{2\pi}{1-\tau}\nu_\tp^2+\frac{2\pi}{1+\tau}\nu_\tm^2- \frac{4\pi}{1-\tau^2}\nu_\tp \nu_\tm  \nonumber \\
&&+\frac{4\pi^2}{(1-\tau)^2}\nu_\tp^3+\frac{4\pi^2}{(1+\tau)^2}\nu_\tm^3 -\frac{16\pi^2(1+\tau^2)}{(1-\tau^2)^2}(\nu_\tp^2 \nu_\tm+\nu_\tp \nu_\tm^2) + \cdots \label{virial}
\end{eqnarray}
Note that this expansion does converge under the assumption (\ref{Bradlow}).
Replicating what was already known for the Abelian Higgs model at criticality, considered in \cite{ManNas}, we thus see that the equation of state (\ref{eqstate}) incorporating the full thermodynamic limit, as well as its virial coefficients (\ref{virial}), 
are independent of the topology (i.e.~the genus $g$) of $\Sigma$.

For fixed $\tau$,  the virial coefficients corresponding to each $(\pm)$-vortex species alone, i.e.\ the coefficients of $\nu_\tpm^n$ in (\ref{virial}), 
are independent of $T$ and generated by a geometric series --- exactly as they would if they were determined by two separate Clausius' equations of state; this is also in agreement with the Abelian Higgs model (see \cite{ManSut}, p.~239). The novelty is the presence of mixed terms, involving powers of both $\nu_\tp$ and $\nu_\tm$, all of them with {\em negative} rather than positive virial coefficients.

If we also choose to discard (as negligible in the thermodynamic limit) the terms involving the genus $g$ in the entropy formula (\ref{firstentropy}), it simplifies to
\begin{equation} \label{newentropy}
S\approx 
K\sum_{\sigma=\pm} k_\sigma \log \frac{{\rm e}^2 KTV}{\hbar^2 k_\sigma} +  K  \sum_{\sigma=\pm} k_\sigma \log  \left[ 1-\sigma\tau - \frac{2\pi (k_\sigma-k_{\tm\sigma})}{V} \right].
\end{equation}
This plainer version of $S$ now has the advantage of being homogeneous of first order in the two particle species, as a standard entropy is expected to be.
We can recognize the first term in (\ref{newentropy}) as the entropy of a mixture of ideal gases of two species (labelled by $\sigma=\pm$), but our model yields an extra term
representing a departure from ideal-gas behaviour.

We will now calculate the entropy of mixing $\Delta S^{\rm mix}$ for the system of vortex gases. 
At a given temperature $T$,  this is defined as the difference between the entropy of the mixture (with densities $\nu_\tpm$ in a volume $V$) and the entropy of a system consisting of the two gas species in separated containers with volumes $V_\tpm$ adding up to $V$ at the same pressure (for ideal gases, this is equivalent to requiring the particle density in each container to equal the total particle density in the original mixture; see  \cite{Cal}, p.~69).
We first compute the partial volumes $V_\tpm$ in the definition from the requirements
$V_\tp+V_\tm=V$ and (making use of the equation of state (\ref{eqstate}))
$$
 \frac{(1-\tau)\frac{k_\tp}{V_\tp}}{1-\tau - 2\pi \frac{k_\tp}{V_\tp}} =  \frac{(1+\tau)\frac{k_\tm}{V_\tm}}{1+\tau - 2\pi \frac{k_\tm}{V_\tm}};
$$
these lead to
$$
V_\tpm=\frac{V\pm \frac{4\pi\tau}{1-\tau^2} k_\tmp}{1+\frac{k_\tmp}{k_\tpm}}.
$$
It should be noted that the thought experiment of isolating the two gases requires (according to~(\ref{Bradlow})) that the separate bounds 
$$
\frac{k_\tpm}{V_\tpm} \le  \frac{1 \mp \tau}{2\pi}
$$
be satisfied, which together amount to a lower bound on the total volume:
$$
\frac{V}{2\pi}\ge \frac{k_\tp}{1-\tau} + \frac{k_\tm}{1+\tau}.
$$
The entropy of mixing is obtained by subtracting from (\ref{newentropy}) the sum
$$
K\sum_{\sigma=\pm}  k_\sigma \left[  \log \frac{{\rm e}^2 KTV_\sigma}{\hbar^2 k_\sigma} +   \log \left(  1-\sigma\tau - \frac{2\pi k_\sigma}{V_\sigma} \right)\right],
$$
and this yields
$$
\Delta S^{\rm mix} = K\sum_{\sigma=\pm} k_\sigma \log \left[ \left( 1+\frac{\nu_{\tm \sigma}}{ \nu_\sigma}\right) \frac{1-\sigma\tau-2\pi(\nu_\sigma-\nu_{\tm \sigma})}{1-\sigma\tau -2\pi [\nu_\sigma + (\frac{1-\sigma\tau}{1+\sigma\tau}) \nu_{\tm \sigma}]} \right].
$$
This quantity is manifestly positive under all assumptions made on the parameters. It should be compared with the classical Gibbs entropy of mixing
$$
\Delta S^{\rm mix}_{\rm ideal} = K\sum_{\sigma=\pm} k_\sigma \log \left( 1+\frac{\nu_{\tm \sigma}}{ \nu_\sigma}\right)
$$
for a system of two ideal gases.

\subsection*{Acknowledgements} 

This project was started as part of the activities of a Junior Trimester Program on ``Mathematical Physics'' hosted at the Hausdorff Research Institute for Mathematics (HIM), University of Bonn, in 2012. We would like to thank HIM for support, our guest Jo\~ao Baptista for remarks about the shrinking limit described in Section~\ref{sec:shrink}, and Marcel B\"okstedt (Aarhus) for many discussions  related to our project. NR is thankful to the School of Mathematics at the University of Leeds,  IH\' ES, the Mainz Institute for Theoretical Physics, as well as the Galileo Galilei Institute for Theoretical Physics in Arcetri for hospitality while this paper was in preparation. This work was supported by the UK Engineering and Physical Sciences Research Council under grant number
EP/P024688/1.

\bibliographystyle{numsty}

\end{document}